\def\jmenovatel{{655978752}} 
\newtheorem{theorem}{Theorem}
\newtheorem{lemma}[theorem]{Lemma}
\newtheorem{observation}[theorem]{Observation}
\newenvironment{lml}[1]{{\noindent {\bf Lemma #1.\boldmath$a$\unboldmath}}\em}{\\ \medskip}
\newenvironment{lmlk}[1]{{\noindent {\bf Lemma #1.\boldmath$a.b$\unboldmath}}\em}{\\ \medskip}
\begin{document}
\title{A superlinear bound on the number of perfect matchings in cubic
  bridgeless graphs} \author{Louis Esperet\thanks{CNRS, Laboratoire
    G-SCOP, Grenoble, France.  E-mail: {\tt
      louis.esperet@g-scop.fr}. Partially supported by the European
    project \textsc{ist fet Aeolus}.}\and Franti{\v s}ek Kardo{\v
    s}\thanks{Institute of Mathematics, Faculty of Science, Pavol
    Jozef \v{S}af\'arik University, Ko\v{s}ice, Slovakia. E-mail: {\tt
      frantisek.kardos@upjs.sk}. Supported by Slovak Science
    and Technology Assistance Agency under contract
    No. APVV-0007-07.}\and Daniel Kr{\'a}l'\thanks{Institute for
    Theoretical Computer Science, Faculty of Mathematics and Physics,
    Charles University, Prague, Czech Republic.  E-mail: {\tt
      kral@kam.mff.cuni.cz}. Supported by The Czech Science Foundation under contract No. GACR 201/09/0197. The Institute for Theoretical Computer
    Science is supported by Ministry of Education of the Czech
    Republic as project 1M0545.}} \date{} \maketitle

\begin{abstract}
  Lov\'asz and Plummer conjectured in the 1970's that cubic bridgeless
  graphs have exponentially many perfect matchings. This conjecture
  has been verified for bipartite graphs by Voorhoeve in 1979, and for
  planar graphs by Chudnovsky and Seymour in 2008, but in general only
  linear bounds are known. In this paper, we provide the first
  superlinear bound in the general case.
\end{abstract}

\section{Introduction}\label{intro}

In this paper we study cubic graphs in which parallel edges are
allowed. A classical theorem of Petersen~\cite{bib-petersen1891}
asserts that every cubic bridgeless graph has a perfect
matching. In fact, it holds that every edge of a cubic bridgeless graph
is contained in a perfect matching. This implies that cubic bridgeless
graphs have at least 3 perfect matchings. In the 1970's, Lov{\'a}sz
and Plummer~\cite[Conjecture 8.1.8]{bib-lovasz86+} conjectured that
this quantity should grow exponentially with the number of vertices of a cubic bridgeless graph. The conjecture has
been verified in some special cases: Voorhoeve~\cite{bib-voorhoeve79}
proved in 1979 that $n$-vertex cubic bridgeless bipartite graphs have
at least $6\cdot\left(4/3\right)^{n/2-3}$ perfect matchings. Recently,
Chudnovsky and Seymour~\cite{bib-chudnovsky08+} proved that cubic
bridgeless planar graphs with $n$ vertices have at least
$2^{n/655978752}$ perfect matchings; Oum~\cite{bib-oum09} proved that cubic bridgeless claw-free graphs with $n$ vertices have at least $2^{n/12}$ perfect matchings.

However, in the general case all known bounds are linear. Edmonds, Lov\'asz, and Pulleyblank~\cite{bib-edmonds82+}, inspired by
Naddef~\cite{bib-naddef82}, proved in 1982 that the dimension of the
perfect matching polytope of a cubic bridgeless $n$-vertex graph is at
least $n/4+1$ which implies that these graphs have at least $n/4+2$
perfect matchings. The bound on the dimension of the perfect matching
polytope is best possible, but combining it with the study of the
brick and brace decomposition of cubic graphs yielded improved
bounds (on the number of perfect matchings in cubic bridgeless graphs)
of $n/2$~\cite{bib-previous}, and $3n/4-10$~\cite{bib-nous}.

Our aim in this paper is to show that the number of perfect matchings
in cubic bridgeless graphs is superlinear. More precisely, we
prove the following theorem:

\begin{theorem}\label{th:main}
For any $\alpha >0$ there exists a constant $\beta>0$ such that every
$n$-vertex cubic bridgeless graph has at least $\alpha n-\beta$ perfect
matchings.
\end{theorem}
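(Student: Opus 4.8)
\noindent The plan is to establish Theorem~\ref{th:main} by \emph{amplifying} a linear bound. Write $m(G)$ for the number of perfect matchings of $G$, and say that the \emph{bound $(\alpha,\beta)$ holds} if every $n$-vertex cubic bridgeless graph has at least $\alpha n-\beta$ perfect matchings; the bound $(3/4,10)$ holds by the result of~\cite{bib-nous} quoted above. The heart of the proof will be an \emph{amplification lemma}: there is an absolute constant $\delta>0$ such that if $(\alpha,\beta)$ holds (say with $\alpha\ge 3/4$) then $\bigl((1+\delta)\alpha,\beta'\bigr)$ holds for some $\beta'=\beta'(\alpha,\beta)$. Granting this, iterate it $k$ times starting from $(3/4,10)$ to obtain the bound $\bigl((3/4)(1+\delta)^k,\beta_k\bigr)$; given a target $\alpha$, pick $k$ with $(3/4)(1+\delta)^k\ge\alpha$, so that $m(G)\ge (3/4)(1+\delta)^k\,n-\beta_k\ge\alpha n-\beta_k$ for every cubic bridgeless $G$, and $\beta:=\beta_k$ proves the theorem. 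Everything thus reduces to the amplification lemma.

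To prove the amplification lemma I would assume $(\alpha,\beta)$ and argue, by induction on $n$, that $m(G)\ge(1+\delta)\alpha n-\beta'$ for every $n$-vertex cubic bridgeless $G$. The first task is to reduce to the case that $G$ is cyclically $4$-edge-connected: $G$ has no bridge, and a nontrivial $2$- or $3$-edge-cut splits $G$ into smaller cubic bridgeless graphs $G_1,G_2$ for which $m(G)=m_0(G_1)m_0(G_2)+m_1(G_1)m_1(G_2)$ (a $2$-cut) or $m(G)=\sum_{i=1}^{3}m_i(G_1)m_i(G_2)$ (a $3$-cut), where the subscripts record which newly inserted cut-edges are used; every term here is at least $1$ because in a cubic bridgeless graph every edge lies in some perfect matching (Petersen's theorem, as noted in the introduction), hence also every edge is avoided by one, so $xy\ge x+y-1$ gives $m(G)\ge m(G_1)+m(G_2)-3$. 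Turning this into the induction requires the now-standard reducibility analysis for this problem (in the spirit of~\cite{bib-previous,bib-nous}) --- choosing the cut carefully, treating small attached pieces such as a $K_4$ or a triangle-inflation by hand, and using the product forms directly when both sides are large --- so that the additive constants collected along the way remain bounded by $\beta'$. This is precisely why the amplification can be iterated only a bounded number of times for each target $\alpha$.

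It remains to win the factor $1+\delta$ for a cyclically $4$-edge-connected cubic graph $G$. Fix a perfect matching $M$ and its complementary $2$-factor $F=G-M$, a disjoint union of cycles; every perfect matching of $G$ is $M$ symmetric-differenced with a vertex-disjoint family of $M$-alternating cycles, so $m(G)$ counts such families. The plan is to exploit cyclic $4$-edge-connectivity to control the cycle structure of $F$ and of the $M$-edges joining the cycles of $F$, to extract from $G$ an auxiliary cubic bridgeless graph $H$ on $(1-o(1))n$ vertices together with a family of $M$-alternating local reroutings, and to show that combining the reroutings with the perfect matchings of $H$ supplied by the hypothesis $(\alpha,\beta)$ produces at least $(1+\delta)\bigl(\alpha|V(H)|-\beta\bigr)$ perfect matchings of $G$, which yields the desired bound. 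A careful choice of $M$ (for instance so that $F$ is not a single long cycle with few usable chords) together with a discharging-style case analysis is what should make this work.

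\emph{Main obstacle.} The crux, and the step I expect to be genuinely hard, is this last one: converting a family of $\Omega(n)$ local reroutings into a \emph{multiplicative} improvement rather than a merely additive gain of $\Omega(n)$, which would only reprove a linear bound. One must arrange the reroutings and the inherited matchings of the auxiliary graph so that the overall count is multiplied by a constant factor, and rule out the configurations --- morally, Petersen-like pieces --- that would block this. Establishing such a self-improving estimate uniformly over all cyclically $4$-edge-connected cubic graphs, while simultaneously keeping the additive bookkeeping from the reductions under control, is where the real work lies.
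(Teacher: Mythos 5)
Your high-level framework --- iterate an improvement to the slope of a linear bound, with the induction on the iteration count keeping the additive constants under control --- is essentially what the paper does: the nested Lemmas~A.$a$ through E.$a.b$ prove a bound of the form $(a+3)n/24-\beta$, with the step from $a-1$ to $a$ providing the increment, and the $n/2$ and $3n/4$ bounds serving as the base case. Your reduction to cyclically $4$-edge-connected graphs via $2$- and $3$-edge-cuts is also in the right spirit (the paper's A- and B-series), though it is more delicate than ``every term is at least $1$'': one side of a cyclic $3$-edge-cut may fail to be double covered (it can be a Klee-graph), which is precisely why the paper invokes Lemma~\ref{lm-double} and Theorem~\ref{thm-Klee} and splits into several subcases there.

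The genuine gap is the amplification step itself, which you flag as the ``main obstacle'' but leave as a heuristic. Your proposed mechanism --- fix a perfect matching $M$, study $M$-alternating cycles in the complementary $2$-factor, build an auxiliary graph $H$, and combine reroutings with matchings of $H$ --- is entirely different from the paper's, and, as you yourself note, nothing in your sketch shows how to turn $\Omega(n)$ local reroutings into a \emph{multiplicative} gain rather than reproving an additive linear bound. The paper wins the factor by a different device: it works throughout with perfect matchings \emph{avoiding a fixed edge} $e$ in a cyclically $4$-edge-connected graph. For a cyclically $5$-edge-connected graph with $e=v_1v_2$, it considers the four graphs obtained by splitting off the four length-three paths $v_1v_2v_3v_4$, $v_1v_2v_3v_5$, $v_1v_2v'_3v'_4$, $v_1v_2v'_3v'_5$; Lemmas~\ref{lm-split-5-same} and~\ref{lm-split-5-diff} guarantee at least three of the reduced graphs are (almost) cyclically $4$-edge-connected, so the $(a-1)$-level bound applies to them, while the key overcounting observation is that any perfect matching of $G$ avoiding $e$ survives in \emph{at most three} of the four reductions --- summing the four lower bounds and dividing by three yields the slope increment. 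The analogous step when $e$ lies in a cyclic $4$-edge-cut (Lemmas D and E) requires the machinery of twisted nets and ladders to handle the sides of the cut that are not ``solid.'' None of this is implied by, or easily substitutable with, the alternating-cycle picture you sketch, so as written the proposal does not establish the theorem.
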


\section{Notation}\label{sec:notation}

A graph $G$ is {\em $k$-vertex-connected} if $G$ has at least $k+1$
vertices, and remains connected after removing any set of at most
$k-1$ vertices. If $\{A,B\}$ is a partition of $V(G)$, the set
$E(A,B)$ of edges with one end in $A$ and the other in $B$ is called
an {\em edge-cut} or a {\it $k$-edge-cut} of $G$, where $k$ is the
size of $E(A,B)$. A graph is {\em $k$-edge-connected} if it has no
edge-cuts of size less than $k$. Finally, an edge-cut $E(A,B)$ is {\em
  cyclic} if the subgraphs induced by $A$ and $B$ both contain a
cycle. A graph $G$ is {\em cyclically $k$-edge-connected} if $G$ has
no cyclic edge-cuts of size less than $k$. The following is a usefull
observation that we implicitly use in our further considerations:

\begin{observation}\label{obs:cyc} If $G$ is a graph with minimum degree
three, in particular $G$ can be a cubic graph, then a $k$-edge-cut
$E(A,B)$ such that $|A|\ge k-1$ and $|B|\ge k-1$ must be cyclic.
\end{observation}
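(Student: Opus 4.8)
The assertion is symmetric in $A$ and $B$, so it suffices to prove that $G[A]$ contains a cycle whenever $|A|\ge k-1$; applying the same argument with the roles of $A$ and $B$ exchanged then produces a cycle in $G[B]$, and the cut $E(A,B)$ is cyclic by definition. My plan is a one-line edge-counting estimate combined with the elementary fact that a (multi)graph with at least as many edges as vertices contains a cycle.

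First I would count the edges of $G[A]$. Since $G$ has minimum degree three, $\sum_{v\in A}\deg_G(v)\ge 3|A|$. On the other hand each edge of $G[A]$ contributes $2$ to this sum and each of the $k$ edges of the cut contributes exactly $1$, so $\sum_{v\in A}\deg_G(v)=2\,|E(G[A])|+k$. Combining these, $2\,|E(G[A])|\ge 3|A|-k$. Using the hypothesis $|A|\ge k-1$, equivalently $k\le |A|+1$, this yields $2\,|E(G[A])|\ge 3|A|-(|A|+1)=2|A|-1$; since the left-hand side is even, in fact $2\,|E(G[A])|\ge 2|A|$, that is $|E(G[A])|\ge |A|$. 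Then I would invoke the standard fact that a graph on $m$ vertices with at most $m-1$ edges is a forest (this holds for multigraphs too), so $G[A]$, having at least $|A|$ edges, is not acyclic and hence contains a cycle. Repeating the computation for $B$ (the cut still has size $k$ and $|B|\ge k-1$) gives a cycle in $G[B]$, so $E(A,B)$ is cyclic.

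The argument is entirely routine; the only point requiring the slightest care is the parity step, which is precisely what allows the hypothesis to be the weaker $|A|\ge k-1$ rather than $|A|\ge k$. It is worth noting that $k-1$ is sharp: a path on $k-2$ vertices, with each leaf receiving two cut-edges and each internal vertex one cut-edge, realizes a $k$-edge-cut of a minimum-degree-three graph in which $G[A]$ is acyclic, so $E(A,B)$ need not be cyclic when $|A|=k-2$.
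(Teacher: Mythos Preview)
Your proof is correct and is exactly the natural edge-counting argument one would expect; the paper itself states this result as an observation without supplying a proof, so there is nothing to compare against beyond noting that your argument is the standard justification. The parity step you highlight is indeed the point of the hypothesis $|A|\ge k-1$, and your sharpness example is apt (with the trivial caveat that for very small $k$ the description of the path degenerates, though the conclusion still holds).
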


In particular, in a graph with minimum degree three, 2-edge-cuts are necessarily
cyclic. Hence, 3-edge-connected cubic graphs and cyclically
3-edge-connected cubic graphs are the same. 

We say that a graph $G$ is {\it $X$-near cubic} for a multiset $X$ of
positive integers, if the multiset of degrees of $G$ not equal to
three is $X$. For example, the graph obtained from a cubic graph by
removing an edge is $\{2,2\}$-near cubic.

If $v$ is a vertex of $G$, then $G\setminus v$ is the graph obtained
by removing the vertex $v$ together with all its incident edges. If $H$
is a connected subgraph of $G$, $G/H$ is the graph obtained by
contracting all the vertices of $H$ to a single vertex, removing
arising loops and preserving all parallel edges. 
Let $G$ and $H$ be two disjoint cubic graphs, $u$ a vertex of $G$
incident with three edges $e_1,e_2,e_3$, and $v$ a vertex of $H$
incident with three edges $f_1,f_2,f_3$. Consider the graph obtained
from the union of $G \setminus u$ and $H \setminus v$ by adding an
edge between the end-vertices of $e_i$ and $f_i$ ($1\le i \le3$)
distinct from $u$ and $v$. We say that this graph is obtained by
\emph{gluing} $G$ and $H$ through $u$ and $v$. Note that gluing a
graph $G$ and $K_4$ through a vertex $v$ of $G$ is the same as
replacing $v$ by a triangle.

A {\em Klee-graph} is inductively defined as being either $K_4$, or
the graph obtained from a Klee-graph by replacing a vertex by a
triangle. A {\em $b$-expansion} of a graph $G$, $b\ge 1$, is obtained
by gluing Klee-graphs with at most $b+1$ vertices each through some
vertices of $G$ (these vertices are then said to be \emph{expanded}).
For instance, a $3$-expansion of $G$ is a graph obtained by replacing
some of the vertices of $G$ with triangles, and by convention a
$1$-expansion is always the original graph. Observe that a
$b$-expansion of a graph on $n$ vertices has at most $bn$
vertices. Also observe that if we consider $k$ expanded vertices and
contract their corresponding Klee-graphs into single vertices in the
expansion, then the number of vertices decreases by at most $k(b-1)\le
kb$.

Let $G$ be a cyclically 4-edge-connected cubic graph and
$v_1v_2v_3v_4$ a path in $G$.  The graph obtained by {\em splitting
off} the path $v_1v_2v_3v_4$ is the graph obtained from $G$ by
removing the vertices $v_2$ and $v_3$ and adding the edges $v_1v_4$
and $v'_1v'_4$ where $v'_1$ is the neighbor of $v_2$ different from
$v_1$ and $v_3$, and $v'_4$ is the neighbor of $v_3$ different from
$v_2$ and $v_4$.

\begin{lemma}
\label{lm-splitoff}
Let $G$ be a cyclically $\ell$-edge-connected graph with at least
$2\ell+2$ vertices, let $G'$ be the graph obtained from $G$ by
splitting off a path $v_1v_2v_3v_4$, and let $v'_1$ be the neighbor of
$v_2$ different from $v_1$ and $v_3$, and $v'_4$ the neighbor of $v_3$
different from $v_2$ and $v_4$.  If $E(A',B')$ is a cyclic
$\ell'$-edge-cut of $G'$ with $\ell'<\ell$, then $\ell'\ge\ell-2$ and
neither the edge $v_1v_4$ nor the edge $v'_1v'_4$ is contained in the
cut $E(A',B')$.
\end{lemma}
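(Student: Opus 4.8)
The plan is to take the given cyclic $\ell'$-edge-cut $E(A',B')$ of $G'$ and pull it back to a cyclic edge-cut of $G$, the only freedom being the choice, for each of the two suppressed vertices $v_2,v_3$, of the side of the cut on which to reinsert it. Write $e=v_1v_4$ and $e'=v_1'v_4'$ for the two edges created by the splitting. Since $V(G')=V(G)\setminus\{v_2,v_3\}$, every such choice turns $\{A',B'\}$ into a partition $\{A,B\}$ of $V(G)$, and a direct count shows that $|E_G(A,B)|$ equals $\ell'$, decreased by $1$ for each of $e,e'$ that joins the two sides of $E(A',B')$, and increased by the number of the five edges $v_1v_2,\ v_2v_1',\ v_2v_3,\ v_3v_4,\ v_3v_4'$ that join the two sides of the new partition. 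The first ingredient I would establish is the following claim: if $G'[A']$ contains a cycle and both $v_2,v_3$ are reinserted on the $A'$-side, then $G[A]$ still contains a cycle. Indeed, a cycle of $G'[A']$ using neither $e$ nor $e'$ survives verbatim; one using $e$ but not $e'$ (resp.\ $e'$ but not $e$) is repaired by rerouting it along the path $v_1v_2v_3v_4$ (resp.\ $v_1'v_2v_3v_4'$); and for a cycle $C$ using both, deleting $e$ and $e'$ from $C$ leaves two paths whose endpoint pairs are either $\{v_1,v_1'\}$ and $\{v_4,v_4'\}$, or $\{v_1,v_4'\}$ and $\{v_4,v_1'\}$ (these being the only possibilities for a single cycle through both edges), and in the first case the route $v_1v_2v_1'$, in the second the route $v_1v_2v_3v_4'$, closes one of these paths into a cycle of $G[A]$.

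I would then distinguish cases according to which of $e,e'$ joins the two sides of $E(A',B')$. Suppose first that $e$ does; the case of $e'$ is identical, applied to the path $v_1'v_2v_3v_4'$, which is again a path of $G$ whose splitting off yields the same graph $G'$, with the roles of $e$ and $e'$ exchanged. If $e'$ also joins the two sides, then neither $e$ nor $e'$ lies inside $A'$ or inside $B'$, so for any choice of sides for $v_2,v_3$ both $G[A]$ and $G[B]$ retain a cycle; choosing the sides so as to minimise $|E_G(A,B)|$ then produces a cyclic edge-cut of $G$ of size less than $\ell$, contradicting cyclic $\ell$-edge-connectivity. If instead $e'$ lies inside one side $S$ of the cut, then reinserting $v_2,v_3$ on the $S$-side keeps a cycle there (by the claim, rerouting through $e'$), the opposite side is untouched and keeps its cycle, and the count gives $|E_G(A,B)|\le\ell'<\ell$ — again impossible. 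Hence neither $e$ nor $e'$ joins the two sides of $E(A',B')$; in particular neither of them lies in $E(A',B')$.

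It remains to treat the case where $v_1,v_4$ lie on one side and $v_1',v_4'$ lie on one side. If all four lie on the same side $A'$, then $G[B']=G'[B']$ keeps its cycle, reinserting $v_2,v_3$ on the $A'$-side keeps a cycle there too (by the claim), and the count gives a cyclic edge-cut of $G$ of size exactly $\ell'<\ell$, impossible. So $e$ lies inside one side and $e'$ inside the other, say $v_1,v_4\in A'$ and $v_1',v_4'\in B'$; reinserting $v_2,v_3$ on either side yields a cut of size exactly $\ell'+2$. The delicate sub-case, and the main obstacle, is when $G'[A']-e$ and $G'[B']-e'$ are \emph{both} forests, since then no placement of $v_2,v_3$ leaves a cycle on both sides of the new partition. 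I would eliminate it by a counting argument: using that $G'$ is cubic, a side of $E(A',B')$ whose induced subgraph is unicyclic with $k$ connected components has exactly $\ell'-2k+2\le\ell'$ vertices, whence $|V(G')|=|A'|+|B'|\le 2\ell'<2\ell$, whereas $|V(G')|=|V(G)|-2\ge 2\ell$ — a contradiction. Once this sub-case is discarded, at least one of $G'[A']-e$ and $G'[B']-e'$ contains a cycle, say $G'[B']-e'$ does; then reinserting $v_2,v_3$ on the $A'$-side keeps a cycle there by the claim (rerouting through $e$) while $G[B]=G'[B']-e'$ keeps its own cycle, and the resulting cyclic edge-cut of $G$ has size $\ell'+2$. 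Cyclic $\ell$-edge-connectivity of $G$ then forces $\ell'+2\ge\ell$, i.e.\ $\ell'\ge\ell-2$, which together with the previous paragraph proves the lemma. Degenerate configurations — coincidences among $v_1,v_4,v_1',v_4'$, or parallel edges created by the splitting — only shrink the cuts obtained above and are handled in exactly the same way.
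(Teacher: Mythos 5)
Your proof is correct and follows essentially the same strategy as the paper's: pull the cut back to $G$ by reinserting $v_2,v_3$, rule out the cases where $e$ or $e'$ crosses the cut or where all four of $v_1,v_4,v_1',v_4'$ lie on one side, and derive $\ell'+2\ge\ell$ from the remaining case. The only organizational differences are that you pre-establish the cycle-survival claim rather than rerouting ad hoc in each case, and you dispatch the ``both sides become forests'' worry with a self-contained unicyclic count ($|A'|,|B'|\le\ell'$ forces $|V(G')|\le 2\ell'$) where the paper instead picks $|A'|\le|B'|$ and uses $|B'|\ge\ell$ together with Observation~\ref{obs:cyc}; both are the same degree count in disguise.
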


\begin{proof}
Assume first that the edges $v_1v_4$ and $v_1'v_4'$ are both in the
cut $E(A',B')$. If $v_1,v_1 \in A'$ and $v_4,v_4'\in B'$ then
$E(A'\cup \{v_2\},B' \cup \{v_3\})$ is a cyclic $(\ell'-1)$-edge-cut
of $G$. Otherwise if $v_1,v_4' \in A'$ and $v_4,v_1'\in B'$ then
$E(A'\cup \{v_2,v_3\},B')$ is a cyclic $\ell'$-edge-cut of $G$. Since
$G$ is cyclically $\ell$-edge-connected, we can exclude these
cases. 

Assume now that only $v_1v_4$ is contained in the cut, i.e.,
$v_1\in A'$ and $v_4\in B'$ by symmetry. We can also assume by
symmetry that $v'_1$ and $v_4'$ are in $A'$.  However in this case,
the cut $E(A'\cup \{v_2,v_3\},B')$ is a cyclic $\ell'$-edge-cut of $G$
which is impossible.  Hence, neither $v_1v_4$ nor $v'_1v'_4$ is
contained in the cut.  Similarly, if $\{v_1,v'_1,v_4,v'_4\}\subseteq
A'$ or $\{v_1,v'_1,v_4,v'_4\}\subseteq B'$, then $G$ would contain a
cyclic $\ell'$-edge-cut.  

We conclude that it can be assumed that $\{v_1,v_4\}\subseteq A'$,
$\{v'_1,v'_4\}\subseteq B'$, and $|A'|\le |B'|$.  Say
$A:=A'\cup\{v_2,v_3\}$, $B:=B'$. Since $G'[A']$ has a cycle, $G[A]$
has a cycle, too. Since $|B|\ge \ell$, there is a cycle in $G[B]$ as
well. Therefore, $E(A,B)$ is a cyclic $(\ell'+2)$-edge-cut in $G$ and
thus $\ell'$ is either $\ell-2$ or $\ell-1$.
\end{proof}

A cubic graph $G$ is {\em $k$-almost cyclically $\ell$-edge-connected}
if there is a cyclically $\ell$-edge-connected cubic graph $G'$
obtained from $G$ by contracting sides of none, one or more cyclic
$3$-edge-cuts and the number of vertices of $G'$ is at least the
number of vertices of $G$ decreased by $k$. In particular, a graph $G$
is $2$-almost cyclically $4$-edge-connected graph if and only if $G$
is cyclically $4$-edge-connected or $G$ contains a triangle such that
the graph obtained from $G$ by replacing the triangle with a vertex is
cyclically $4$-edge-connected.  Observe that the perfect matchings of
the cyclically $4$-edge-connected cubic graph $G'$ correspond to
perfect matchings of $G$ (but several perfect matchings of $G$ can
correspond to the same perfect matching of $G'$ and some perfect
matchings of $G$ correspond to no perfect matching of $G'$).

We now list a certain number of facts related to perfect matchings in
graphs, that will be used several times in the proof. The first one,
due to Kotzig, concerns graphs (not necessarily cubic) with only one
perfect matching.

\begin{lemma}
\label{lm-bridge}
If $G$ is a graph with a unique perfect matching, then $G$ has a
bridge that is contained in the unique perfect matching of $G$.
\end{lemma}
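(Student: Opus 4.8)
The plan is to prove the statement by induction on the number of vertices of $G$ (which must be even, since $G$ has a perfect matching). The base case, $|V(G)|=2$, is immediate: the single edge joining the two vertices is a bridge and is the unique perfect matching. For the inductive step, let $M$ be the unique perfect matching of $G$, and pick any edge $uv\in M$. I will try to find an edge of $M$ that is a bridge; the candidate will be produced by looking at the structure of $G$ around a vertex of small degree or by a parity/alternating-path argument.

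The cleanest approach I would take is the classical one via alternating paths. Suppose for contradiction that no edge of $M$ is a bridge. Then every edge of $M$ lies on a cycle. Take $e_0=u_0v_0\in M$ and a cycle $C_0$ through it; since $G$ has a unique perfect matching, $C_0$ cannot be $M$-alternating (otherwise switching along $C_0$ would give a second perfect matching), so $C_0$ contains two consecutive non-matching edges, meeting at a vertex $w$ whose incident edges on $C_0$ are both outside $M$. Let $xw$ be the matching edge at $w$. The idea is to walk along $C_0$ from $w$ until the alternation breaks again, and use the matching edge there to start a new shorter structure; more robustly, one deletes the matching edge $xw$ together with a carefully chosen non-matching edge and applies induction to a smaller graph. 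The slickest version: consider a vertex $v$ of degree $1$ if one exists — then its unique edge is a bridge in $M$ and we are done — so assume $\delta(G)\ge 2$; among all edges of $M$ that are \emph{not} bridges, choose one, say $uv$, and observe $G-u-v$ still has a unique perfect matching $M\setminus\{uv\}$, so by induction it has a bridge $f$ in that matching; then argue that $f$ is also a bridge of $G$, using that the only edges of $G$ not in $G-u-v$ join $u$ or $v$ to the rest, and a short case analysis on how adding $u,v$ back could create a cycle through $f$ — this is where one invokes that $uv$ itself was assumed non-bridge to route around $f$.

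The main obstacle is making the inductive step airtight: deleting the two endpoints of a matching edge can disconnect $G$ or merge components in a way that destroys the "bridge" property of $f$, and one must check that $f$ remains a bridge in $G$ and that $f\in M$. I expect the argument to split into the case where $G-u-v$ is connected (easy: $f$ separates it, and re-attaching $u,v$ through edges incident to one side cannot create an $f$-avoiding cycle unless $uv$ were a bridge, contradiction) and the case where $G-u-v$ is disconnected (then in fact the edge $uv$ or some edge incident to $u$ or $v$ is already a bridge of $G$, handled directly). An alternative that avoids this bookkeeping is to induct after contracting rather than deleting, or to argue via a minimal counterexample and a shortest "bad" alternating walk; I would keep the deletion argument as the primary line and fall back on the alternating-path contradiction if the case analysis becomes unwieldy.
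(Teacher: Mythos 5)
The paper does not prove Lemma~\ref{lm-bridge}; it is quoted as Kotzig's theorem without proof, so there is no ``paper proof'' to compare against and your proposal must stand on its own.

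Your primary line --- delete the two endpoints $u,v$ of a matching edge, find by induction a bridge $f$ of $G-u-v$ lying in $M\setminus\{uv\}$, and then argue that $f$ is also a bridge of $G$ --- has a genuine gap at that last step, and the case analysis you sketch does not close it. Suppose $G-u-v$ is connected and $f$ separates it into sides $A$ and $B$. If every neighbour of $u$ other than $v$ lies in $A$ and every neighbour of $v$ other than $u$ lies in $B$, then $G-f$ is still connected via the route $A$--$u$--$v$--$B$, so $f$ is \emph{not} a bridge of $G$. At the same time $G-uv$ is also connected (via $f$), so $uv$ is not a bridge either. Hence your claimed dichotomy --- that re-attaching $u,v$ ``cannot create an $f$-avoiding cycle unless $uv$ were a bridge'' --- is simply false: in this configuration $\{f,uv\}$ is a $2$-edge-cut of $G$, neither edge of it is a bridge, and you obtain no contradiction. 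The deletion induction as set up does not prove the lemma.

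Your fallback, the alternating-path argument, is indeed the standard route, but you only gesture at it. A correct version takes a longest $M$-alternating path $P$ whose first and last edges lie in $M$; if an end vertex of $P$ has degree one its unique edge is the desired bridge, and otherwise every further neighbour of that end vertex must lie on $P$ by maximality, so one analyses the parity of the chord thus produced and extracts an $M$-alternating cycle, contradicting uniqueness of $M$. Even this needs care: a chord to a vertex of the wrong parity does not close an alternating cycle, so one must show a chord of the right parity exists (for instance by rerouting $P$ through a bad chord and iterating). As written, neither of your two lines constitutes a complete proof.
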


A graph $G$ is said to be {\em matching-covered} if every edge is
contained in a perfect matching of $G$, and it is \emph{double
  covered} if every edge is contained in at least two perfect
matchings of $G$.

\begin{theorem}[\cite{bib-plesnik72}]
\label{thm-ef}
Every cubic bridgeless graph is matching-covered. Moreover, for any
two edges $e$ and $f$ of $G$, there is a perfect matching avoiding
both $e$ and $f$.
\end{theorem}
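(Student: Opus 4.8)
The plan is to derive both parts from Tutte's $1$-factor theorem, together with a parity observation and an edge-counting argument that exploits bridgelessness. Throughout I use that a cubic graph has an even number of vertices (the degree sum equals $3n=2|E(G)|$), so that for every $T\subseteq V(G)$ the number $q(G-T)$ of odd components of $G-T$ has the same parity as $|T|$. I also use that an odd component $C$ of $G-T$ sends an odd number of edges to $T$ — inside $C$ the degrees sum to an odd number, and in $G-T$ the component $C$ has edges only towards $T$ — and hence sends at least three edges to $T$, since $G$ is bridgeless.

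For the first statement, fix an edge $e=ab$; it suffices to show that $G\setminus a\setminus b$ has a perfect matching, since adding $e$ to such a matching yields a perfect matching of $G$ containing $e$. If $G\setminus a\setminus b$ has none, Tutte's theorem provides a set $S\subseteq V(G)\setminus\{a,b\}$ with $q((G\setminus a\setminus b)-S)>|S|$. Put $T=S\cup\{a,b\}$, so that $G-T=(G\setminus a\setminus b)-S$ and $|T|=|S|+2$; the parity observation upgrades the strict inequality to $q(G-T)\ge|T|$. Now count the edges between $T$ and the odd components of $G-T$. From the side of the components there are at least $3\,q(G-T)\ge 3|T|$ such edges; from the side of $T$ there are at most $3|T|-2$, because the edge $e$ joins the two vertices $a$ and $b$ of $T$ and therefore does not leave $T$. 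This contradiction proves that $G$ is matching-covered.

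For the second statement, fix two edges $e$ and $f$; it suffices to show that the graph $H:=G-e-f$ obtained by deleting these two edges has a perfect matching, as such a matching avoids $e$ and $f$. If $H$ has no perfect matching, Tutte's theorem yields $S\subseteq V(G)$ with $q(H-S)>|S|$, hence $q(H-S)\ge|S|+2$ since $H$ has an even number of vertices. Estimate, in the ambient graph $G$, the number of edges joining $S$ to the odd components of $H-S$. Each odd component $C$ of $H-S$ has at least three edges of $G$ leaving it, and in $H$ every such edge reaches $S$ except for those of $\{e,f\}$ crossing the boundary of $C$; since each of $e$ and $f$ crosses the boundary of at most two of these components, summing over the $q(H-S)\ge|S|+2$ odd components loses at most $4$ edges overall. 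Hence at least $3(|S|+2)-4=3|S|+2$ edges of $H$ run between $S$ and the odd components of $H-S$. But at most $3|S|$ edges leave $S$ in $G$, and hence in $H$ — a contradiction.

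The whole argument is a piece of careful bookkeeping of edge-cuts, so I do not anticipate a genuine obstacle. The one point that needs attention is the accounting in the second part: the two deleted edges may split off new components of $H-S$ and divert boundary edges away from $S$, and one must check that the bridgeless slack — three boundary edges for every odd component, against a total loss of at most four — is exactly enough to close the gap. This is also the reason the first statement is treated separately, since there the deleted object is a pair of adjacent vertices rather than a pair of edges, and the single edge $e$ inside $T$ is precisely what gives the needed ``$-2$''.
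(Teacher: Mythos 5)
Your proof is correct. The paper does not actually prove Theorem~\ref{thm-ef}; it is cited to Plesn\'{\i}k (1972), so there is no in-paper argument to compare against. Your argument is the standard one for this family of statements: apply Tutte's theorem to the relevant auxiliary graph, sharpen the resulting strict inequality by one via parity (using that a cubic graph has an even number of vertices), observe that an odd component of a cubic bridgeless graph has an odd and hence at least $3$ boundary edges, and then close with an edge count against the at most $3|S|$ (or $3|T|-2$) edges available on the other side.

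All the delicate points are handled correctly. In the first part, the key ``$-2$'' comes from the edge $e=ab$ being internal to $T$, and the parity step correctly upgrades $q(G-T)\geq|T|-1$ to $q(G-T)\geq|T|$. In the second part you delete edges rather than vertices, so the slack instead comes from $q(H-S)\geq|S|+2$ giving $3(|S|+2)=3|S|+6$ boundary edges in $G$, of which at most $4$ are lost to $\{e,f\}$ (each deleted edge has two endpoints and therefore crosses at most two boundaries of odd components). The remaining $3|S|+2$ edges are edges of $H$ and must land in $S$ because each $C$ is a component of $H-S$, whereas only $3|S|$ edges of $H$ can leave $S$. This is exactly tight and is the genuine content of Plesn\'{\i}k's refinement over the basic Petersen/matching-covered argument, and you have identified that correctly in your closing remark.
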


The following three theorems give lower bounds on the number of
perfect matchings in cubic graphs. 

\begin{theorem}[\cite{bib-chudnovsky08+}]
\label{thm-Klee}
Every planar cubic graph (and thus every Klee-graph) with $n$ vertices has
at least $2^{n/\jmenovatel}$ perfect matchings.
\end{theorem}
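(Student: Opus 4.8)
We quote this exponential bound from~\cite{bib-chudnovsky08+}; here is the route I would take to establish it. Since $K_4$ is planar and replacing a vertex by a triangle preserves planarity, cubicity and bridgelessness, every Klee-graph is planar cubic bridgeless, so it suffices to prove the bound for planar cubic bridgeless graphs (for a cubic planar graph with a bridge, the bridge lies in every perfect matching by a parity argument, so the count factors over the two sides, which are smaller near-cubic planar graphs handled by the same induction with an enlarged constant). I would then reduce to cyclically $4$-edge-connected graphs: a nontrivial $2$- or $3$-edge-cut $E(A,B)$ is split by contracting $A$ and $B$ to single vertices, producing two smaller planar cubic bridgeless graphs whose perfect-matching counts combine — using Theorem~\ref{thm-ef}, which forces every cut edge simultaneously into some perfect matching and out of another — in such a way that the product of the two inductive bounds dominates $2^{n/c}$ once $c$ is large. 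These reductions are routine but cost a constant factor in the exponent. Hence from now on $G$ may be assumed cyclically $4$-edge-connected.

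\textbf{Induction via an unavoidable reducible configuration.} I would proceed by induction on $n=|V(G)|$. For $n$ below a threshold $n_0$, $G$ is matching-covered by Theorem~\ref{thm-ef}, hence has at least three perfect matchings, and $3\ge 2^{n/c}$ as soon as $c$ is at least $n_0/\log_2 3$; these are the base cases. For larger $n$ I would use Euler's formula for the plane graph $G$ — a connected cubic planar graph on $n$ vertices has exactly $n/2+2$ faces of total length $3n$, so its average face length is below $6$ — refined by a discharging argument, to produce a finite list $\mathcal{C}$ of \emph{configurations}, each a connected subgraph on at most $c$ vertices, such that every cyclically $4$-edge-connected planar cubic graph on more than $n_0$ vertices contains a member of $\mathcal{C}$. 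To each $H\in\mathcal{C}$ I would attach a \emph{reduction}: a local surgery deleting $H$ and reattaching a strictly smaller cubic gadget, producing a planar cubic graph $G'$ with $|V(G')|\ge n-c$ such that (i) $G'$ is again cyclically $4$-edge-connected — small stray cyclic cuts created by the surgery are uncrossed and absorbed via Observation~\ref{obs:cyc} and Lemma~\ref{lm-splitoff}, in the worst case passing to a slightly smaller cyclically $4$-edge-connected minor — and (ii) every perfect matching of $G'$ extends to at least two perfect matchings of $G$, so that $m(G)\ge 2\,m(G')$. Property (ii) would be read off from the matching-covered structure of the gadget: Theorem~\ref{thm-ef} forces the relevant gadget edges both into and out of perfect matchings, and Lemma~\ref{lm-bridge} rules out a forced unique extension because the gadget is bridgeless. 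Combining, $m(G)\ge 2\,m(G')\ge 2\cdot 2^{(n-c)/c}=2^{n/c}$, closing the induction with $c=\jmenovatel$.

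\textbf{The main obstacle.} The genuinely hard point is making (i) and (ii) hold \emph{at the same time}: short faces are abundant and a single short face is easy to reduce, but the naive reduction typically introduces small cyclic edge-cuts, and repairing those while still guaranteeing the doubling in (ii) forces one to enlarge both the configuration list $\mathcal{C}$ and the maximum configuration size. The compounding of these enlargements over all cases — together with the constant lost in the reduction to cyclic $4$-edge-connectivity — is exactly what inflates the constant to $\jmenovatel$; I would only aim to keep it finite, not to optimize it.
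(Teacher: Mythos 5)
The paper does not prove this theorem; it is quoted verbatim from Chudnovsky and Seymour~\cite{bib-chudnovsky08+}, and the constant $655978752$ is simply carried over from that paper. So there is no proof in this paper for you to match, and the right answer for this item is a one-line citation, not a proof sketch.

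That said, since you did offer a sketch, it is worth saying where it is and isn't sound. Your preliminary reductions are reasonable: handling bridges (correctly noting the statement really concerns bridgeless graphs, since a cubic graph with a bridge may have \emph{no} perfect matching) and cutting along $2$- and $3$-edge-cuts down to the cyclically $4$-edge-connected case, using Theorem~\ref{thm-ef} to see that perfect matchings on the two sides combine, is the standard opening move and is also what the present paper does in its own induction. The genuine gap is the middle. You assert, without argument, the existence of a finite unavoidable set $\mathcal{C}$ of configurations together with reductions satisfying simultaneously (i) preservation (or controlled loss) of cyclic $4$-edge-connectivity and (ii) the doubling inequality $m(G)\ge 2\,m(G')$. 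Nothing in Euler's formula or in Lemma~\ref{lm-bridge} / Theorem~\ref{thm-ef} yields (ii): a short face certainly exists, but there is no a priori reason that deleting it and reattaching a gadget lets every perfect matching of $G'$ lift in at least two ways to $G$, and indeed ensuring a multiplicative gain at every step is precisely the hard theorem. Your ``main obstacle'' paragraph names this honestly, but naming an obstacle is not overcoming it; as written the argument reduces to ``assume the Chudnovsky--Seymour machinery exists,'' which is circular for the purposes of proving the bound. I also cannot confirm from this paper that Chudnovsky and Seymour actually proceed by discharging plus local reducible configurations in the way you describe; if you want to present a proof outline rather than a citation, you would need to consult their paper and reproduce its actual mechanism rather than a plausible-sounding analogue.
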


\begin{theorem}[\cite{bib-voorhoeve79}]
\label{thm-bip}
Every cubic bridgeless bipartite graph with $n$ vertices has at least
$(4/3)^{n/2}$ perfect matchings avoiding any given edge.
\end{theorem}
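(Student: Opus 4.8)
This is Voorhoeve's theorem; we outline the inductive proof. Write the two classes of the cubic bipartite graph $G$ as $U$ and $W$ with $|U|=|W|=n/2$ and identify $G$ with its biadjacency matrix, so that the number of perfect matchings of $G$ equals the permanent of this matrix. A perfect matching avoids a fixed edge $e$ precisely when it is a perfect matching of the bipartite graph $G\setminus e$ (equivalently, it is counted by the permanent of the matrix with the entry of $e$ decreased by one), so it suffices to bound the number of perfect matchings of $G\setminus e$. The statement to prove by induction on $n$ is a bound of the form $c_\tau\cdot(4/3)^{n/2}$ valid for every bipartite graph $H$ on $n$ vertices with all degrees at most three that has at least one perfect matching, where the constant $c_\tau>0$ depends only on the deficiency type $\tau$ of $H$ — the multiset of degrees below three, recorded together with the classes in which these vertices lie. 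The original statement is the cubic type, with $c_\tau=6\,(4/3)^{-3}$; the graph $G\setminus e$ is of the balanced type with exactly one vertex of degree two in each class, and for this type one needs $c_\tau\ge1$. Base cases are small graphs; for $G\setminus e$ with $n/2=1$ this is a pair of vertices joined by a double edge, with $2\ge4/3$ perfect matchings.

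For the inductive step one chooses a suitable vertex $v$ and splits the perfect matchings according to the edge at $v$ that is used, giving $\mathrm{pm}(H)=\sum\mathrm{pm}(H-v-x)$, the sum being over the at most three edges $vx$ incident with $v$; this is a Laplace expansion of the permanent along the line of $v$. Each $H-v-x$ is smaller and of some deficiency type, since removing $v$ and $x$ lowers by one the degrees of the at most four other endpoints of edges at $v$ and $x$. Substituting the inductive bounds for these graphs into the sum reduces the step to a finite system of numerical inequalities linking the constants $c_\tau$; their local form involves the per-degree factors $(4/3)^{1/2}$ (degree three), $g(2)$, and $g(1)=1$ by which a bound degrades when a degree drops, the prototype being $3\,g(2)^4\ge(4/3)^3$, which comes from expanding at a degree-three vertex all three of whose branches are nonempty and each of which turns four degree-three vertices into degree-two vertices. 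Since this and the companion inequalities are essentially tight (e.g.\ long even cycles show such a factor cannot exceed $1$), the slack needed to push $c_\tau$ up to $1$ for the balanced type must be recovered from the extra branch one gains whenever $v$ can be chosen adjacent to a deficiency; verifying that a single consistent family of constants $c_\tau$ meets all of these requirements is the combinatorial heart of the argument.

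The main obstacle is that a branch $H-v-x$ may have no perfect matching, contributing $0$ to the sum, and one cannot afford to lose more than one of the three branches. Here bipartiteness is essential: by Hall's theorem one controls exactly which edges of $H$ lie in a perfect matching, hence which branches are nonempty, and $v$ must be chosen so that enough nonempty branches remain in good-enough deficiency types; a vertex whose matching partner is forced is contracted at once. Kotzig's Lemma~\ref{lm-bridge} eliminates the sporadic case of a unique perfect matching unless $H$ has a bridge, in which case $H$ splits into two smaller near-cubic graphs and their bounds multiply, and Theorem~\ref{thm-ef} applied to $G$ guarantees that $G\setminus e$ has a perfect matching in the first place. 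What is left is bookkeeping — multiple or triple edges at $v$, two deficiencies lying close together, and the interaction of a deficiency with the chosen vertex each need a short separate treatment — and it is checking that all of these are compatible with one family of constants $c_\tau$, rather than any single isolated difficulty, that makes the proof lengthy.
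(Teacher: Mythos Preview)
The paper does not prove this theorem; it is quoted from Voorhoeve~\cite{bib-voorhoeve79} as a known result, so there is no in-paper proof to compare your sketch against.

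Your outline is faithful to Voorhoeve's method: induction via Laplace expansion of the permanent at a well-chosen vertex, with constants $c_\tau$ indexed by the deficiency type, and the prototype inequality $3g(2)^4\ge(4/3)^3$ is the correct one for the cubic step. One caution: the form stated here --- at least $(4/3)^{n/2}$ matchings \emph{avoiding a given edge} --- is not Voorhoeve's literal theorem (which gives $6(4/3)^{n/2-3}$ matchings in total), and you correctly identify that reaching $c_\tau\ge1$ for the balanced two-deficiency type cannot come from the generic prototype (which only forces $g(2)\ge(64/81)^{1/4}<1$) but must exploit expanding at a vertex adjacent to a deficiency. That step, however, is the entire content of the induction, not ``bookkeeping'': your sketch names the difficulty and asserts it can be overcome, but never exhibits a family of constants that actually closes the system of inequalities. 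As a roadmap of Voorhoeve's argument it is accurate; as a proof of the precise statement it stops short of the verification.
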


\begin{theorem}[\cite{bib-previous}]
\label{thm-half}
Every cubic bridgeless graph with $n$ vertices has at least $n/2$
perfect matchings.
\end{theorem}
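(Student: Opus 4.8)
\emph{Strategy.} The plan is to use the tight cut (brick and brace) decomposition of matching-covered graphs --- note that $G$ is matching-covered by Theorem~\ref{thm-ef} --- together with two external ingredients: Voorhoeve's theorem (Theorem~\ref{thm-bip}) for the bipartite pieces, and the Edmonds--Lov\'asz--Pulleyblank dimension formula for the non-bipartite ones. For a matching-covered graph $G$ that formula reads $\dim\mathrm{PM}(G) = |E(G)| - |V(G)| + 1 - b(G)$, where $b(G)$ is the number of bricks in the decomposition; since a polytope of dimension $d$ has at least $d+1$ vertices, the vertices of $\mathrm{PM}(G)$ are exactly the perfect matchings of $G$, and $|E(G)| = 3n/2$ for a cubic graph, we obtain
\[
\mathrm{pm}(G)\ \ge\ \tfrac n2 + 2 - b(G).
\]
In particular $\mathrm{pm}(G)\ge n/2$ whenever $b(G)\le 2$; and if $G$ is itself a brick ($b(G)=1$) or a brace ($b(G)=0$) we even get $\mathrm{pm}(G)\ge n/2+1$, the brace case being bipartite where Theorem~\ref{thm-bip} gives far more.

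\emph{Induction.} It remains to handle $b(G)\ge 3$, which I would do by induction on $n$. Let $\partial(X)$ be a nontrivial tight cut; it has odd size, and the principal case is size $3$, where splitting $G$ into $G_1:=G/X$ and $G_2:=G/(V(G)\setminus X)$ yields two cubic bridgeless graphs with $|V(G_1)|+|V(G_2)|=n+2$, both strictly smaller and with strictly fewer bricks, so the induction applies to each. With $e_1,e_2,e_3$ the cut edges and $a_i$ (resp.\ $b_i$) the number of perfect matchings of $G_1$ (resp.\ $G_2$) using $e_i$, tightness gives $\mathrm{pm}(G)=\sum_i a_ib_i$, and since each $a_i,b_i\ge 1$ (matching-coveredness) we get $\mathrm{pm}(G)\ge\mathrm{pm}(G_1)+\mathrm{pm}(G_2)-3$, which is off by a small constant from what is needed. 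The slack is recovered by noting that $\sum_i(a_i-1)(b_i-1)$ is small only in degenerate situations --- when one side of the cut is tiny, essentially a $K_4$, i.e.\ a vertex of the smaller graph blown up into a triangle, or another graph with very few perfect matchings. In that case one argues directly with the triangle: $\mathrm{pm}(G)$ equals $\mathrm{pm}(G_1)$ plus the number of perfect matchings of $G$ using all three cut edges, and one shows either that this extra term is positive or that $\mathrm{pm}(G_1)$ already exceeds its extremal value, closing the induction.

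\emph{Main obstacle.} The overall shape is clean --- the ELP dimension bound disposes of the ``few bricks'' regime and a peeling induction disposes of ``many bricks'' --- so the genuine difficulty is entirely in making the constants in the inductive estimate meet despite the loss suffered when recombining the two sides of a $3$-edge-cut. This is exactly where the proof must descend into a case analysis of small bricks and small triangle-expansions. One must also treat tight cuts of larger odd size (where the pieces become near-cubic, and the ELP formula only becomes more favorable, but the bookkeeping on vertex counts and brick counts must be redone) and verify the finitely many small base graphs directly.
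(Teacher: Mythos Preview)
The paper does not contain a proof of Theorem~\ref{thm-half}; the result is quoted from~\cite{bib-previous} and used only as a black box (it supplies the base case $a=0$ of Lemma~A.$a$). So there is no proof here to compare your proposal against. Your overall architecture --- the Edmonds--Lov\'asz--Pulleyblank dimension formula to handle graphs with few bricks, and an induction along tight cuts for the rest --- is indeed the framework of~\cite{bib-previous}, as the introduction of the present paper already indicates.

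What you have written, however, is an outline rather than a proof, and you say as much yourself: the ``main obstacle'' you flag is essentially the entire content of the argument. Your inequality $\mathrm{pm}(G)\ge\mathrm{pm}(G_1)+\mathrm{pm}(G_2)-3$ loses a constant of $2$ against the target at every split, and the proposed remedy --- exploit the slack $\sum_i(a_i-1)(b_i-1)$, and when it vanishes analyse the degenerate side directly --- does none of the actual work. A concrete obstruction: when one side of a tight $3$-cut is $K_4$ (a contracted triangle), every $b_i=1$, the slack is zero, and $\mathrm{pm}(G)=\mathrm{pm}(G_1)$ exactly; you then need $\mathrm{pm}(G_1)\ge n/2 = |V(G_1)|/2 + 1$, strictly more than the inductive hypothesis delivers. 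This cannot be repaired locally --- one needs either a strengthened inductive statement or a global argument controlling how many such degenerate cuts can accumulate, and that is precisely where the substance of~\cite{bib-previous} lies. You also leave the larger-tight-cut case (near-cubic pieces) and the small base cases entirely untreated. In short: the strategy is right, but the proof has not yet started.
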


The main idea in the proof of
Theorem~\ref{th:main} will be to cut the graph into pieces, apply
induction, and try to combine the perfect matchings in the different
parts. If they do not combine well then we will show that
Theorems~\ref{thm-Klee} and~\ref{thm-bip} can be applied to large
parts of the graphs to get the desired result. Typically this will
happen if some part is not double covered (some edge is in only one
perfect matching), or if no perfect matching contains a given edge
while excluding another one. In these cases the following
two lemmas will be very useful.

\begin{lemma}[\cite{bib-nous}]
\label{lm-double}
Every cyclically $3$-edge-connected cubic graph that is a not a Klee-graph
is double covered. In particular, every cyclically $4$-edge-connected cubic
graph is double covered.
\end{lemma}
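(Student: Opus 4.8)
The plan is to prove Lemma~\ref{lm-double} by induction on the number of vertices of the cubic graph $G$. Since a cyclically $3$-edge-connected cubic graph that is not a Klee-graph has at least six vertices (the smallest cyclically $3$-edge-connected cubic graphs are $K_4$ and the complete bipartite graph $K_{3,3}$, and everything smaller is a Klee-graph or not cyclically $3$-edge-connected), I would first handle small base cases directly, checking for instance that $K_{3,3}$ and the prism are double covered. For the inductive step, fix an edge $e=xy$; by Theorem~\ref{thm-ef} there is at least one perfect matching containing $e$, so I must exhibit a second one. The natural move is to analyze the structure of $G$ relative to $e$: either $G$ is cyclically $4$-edge-connected, or $G$ has a cyclic $3$-edge-cut, and these two regimes are treated separately.

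In the cyclically $4$-edge-connected case, the idea is to look at a perfect matching $M$ containing $e$ and to try to find an $M$-alternating cycle through $e$ whose symmetric difference with $M$ still contains $e$; equivalently, one wants a second perfect matching through $e$. A clean way to force this: delete the two endpoints $x,y$ of $e$, obtaining a $\{2,2,2,2\}$-near cubic graph $G'=G\setminus\{x,y\}$ on $n-2$ vertices, and observe that perfect matchings of $G$ containing $e$ correspond bijectively to perfect matchings of $G'$. By high connectivity, $G'$ has no bridge that forces a unique perfect matching: if $G'$ had a unique perfect matching, Lemma~\ref{lm-bridge} would yield a bridge of $G'$, which translates into a small edge-cut of $G$ contradicting cyclic $4$-edge-connectivity (here one checks that a bridge in $G\setminus\{x,y\}$ together with the at most four edges formerly incident to $x,y$ gives a cyclic cut of size at most $3$ in $G$, separating two parts each still containing a cycle because $G$ is large enough). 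Hence $G'$ has at least two perfect matchings, i.e. $e$ lies in at least two perfect matchings of $G$. Running this for every edge $e$ gives that $G$ is double covered.

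In the case where $G$ has a cyclic $3$-edge-cut $E(A,B)=\{a_1b_1,a_2b_2,a_3b_3\}$, I would contract one side. Let $G_A=G/G[B]$ and $G_B=G/G[A]$; both are cubic, cyclically $3$-edge-connected, and strictly smaller than $G$, and at least one of them, say $G_A$, is not a Klee-graph (if both were Klee-graphs then $G$ would be a Klee-graph, contradicting our hypothesis — this uses that gluing two Klee-graphs through the contracted vertices reconstructs a Klee-graph). Apply the induction hypothesis to $G_A$ (and, where needed, use Theorem~\ref{thm-ef} or the trivial fact that cubic bridgeless graphs are matching-covered on the possibly-Klee side $G_B$, invoking Theorem~\ref{thm-Klee} only to know matchings exist there, not to count). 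A given edge $e$ of $G$ lies on one side, say in $G[A]\cup E(A,B)$, hence corresponds to an edge of $G_A$; pick two perfect matchings $M_1,M_2$ of $G_A$ containing it. Each $M_i$ uses either exactly one or exactly three of the cut edges of $G_A$ (parity of the cut), and for each choice of the used cut edges there is a compatible perfect matching on the $B$-side since the relevant near-cubic pieces obtained from $G[B]$ are matching-covered. Combining, $M_1$ and $M_2$ extend to two distinct perfect matchings of $G$ through $e$.

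The main obstacle is the bookkeeping in the $3$-edge-cut case: one must verify that whatever subset of $\{a_1b_1,a_2b_2,a_3b_3\}$ a matching on the $A$-side uses (either all three, or exactly one), the complementary requirement on the $B$-side is satisfiable — that is, the graph $G[B]$ with appropriate vertices suppressed or edges forced is bridgeless/matching-covered — and that the two extensions one builds from $M_1$ and $M_2$ are genuinely distinct perfect matchings of $G$ (they differ already inside $A$, so this is fine once both extend at all). A secondary subtlety is making the ``not both Klee-graphs'' reduction airtight, and checking the base cases so that the induction has somewhere to stand; the cyclically $4$-edge-connected subcase, by contrast, reduces cleanly to Lemma~\ref{lm-bridge} via vertex deletion and should be routine.
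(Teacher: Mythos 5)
Your overall strategy (induction on $|V(G)|$; handle the cyclically $4$-edge-connected case by applying Kotzig's Lemma~\ref{lm-bridge} to $G\setminus\{x,y\}$; handle a cyclic $3$-edge-cut by contracting one side and combining matchings across the cut) is sound, but the second branch has a real gap. You fix the labeling so that $G_A$ is the non-Klee side and then write ``A given edge $e$ of $G$ lies on one side, \emph{say} in $G[A]\cup E(A,B)$'' and pick two perfect matchings of $G_A$ through $e$. That ``say'' is not a harmless symmetry: if $e$ lies strictly inside the Klee side $G[B]$, you cannot apply the induction hypothesis to $G_B$, and a Klee-graph need not be double covered (in the triangular prism the three triangle edges each lie in a unique perfect matching), so ``pick two perfect matchings containing $e$'' fails. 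The case must be handled separately, and the fix is easy but you must say it: when $e$ is on the Klee side, take one perfect matching $M$ of $G_B$ through $e$, note that $M$ uses exactly one cut edge $f$ (the contracted vertex has degree $3$, so never three cut edges — your ``either one or three'' is off here), and then apply the induction hypothesis to the non-Klee side $G_A$ to obtain two perfect matchings through $f$; both extend $M$ to distinct perfect matchings of $G$ containing $e$.

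Two smaller points. First, the triangular prism is itself a Klee-graph (it is $K_4$ with one vertex replaced by a triangle) and is \emph{not} double covered, so it is not a base case of the lemma — it is precisely the sort of graph the Klee hypothesis excludes; proposing to ``check that the prism is double covered'' suggests this was not noticed. Second, in the cyclically $4$-edge-connected branch your phrase ``each part still containing a cycle because $G$ is large enough'' glosses over the subcase where the smaller side $C_1$ of the bridge-cut of $G\setminus\{x,y\}$ is a single vertex $v$ with two edges to $\{x,y\}$; there $G[C_1]$ has no cycle, and the contradiction comes instead from the cyclic $3$-edge-cut around the triangle $\{x,y,v\}$ (using Observation~\ref{obs:cyc} once $n\ge 5$), not from the cut around $C_1$. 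These are fixable, but the missing Klee-side case is a genuine omission in the argument as written.
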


\begin{lemma}
\label{lm-special}
Let $G$ be a cyclically $4$-edge-connected cubic graph and $e$ and $f$
two edges of $G$.  $G$ contains no perfect matchings avoiding $e$ and
containing $f$ if and only if the graph $G\setminus\{e,f\}$ is
bipartite and the end-vertices of $e$ are in one color class while
the end-vertices of $f$ are in the other.
\end{lemma}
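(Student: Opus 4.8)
\section*{Proof proposal for Lemma~\ref{lm-special}}

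The plan is to treat the two implications separately. The forward (``if'') direction is a short degree-count, while the converse will be obtained from Tutte's theorem together with the rigidity that cubicity, bridgelessness and cyclic $4$-edge-connectivity impose on a Tutte barrier. Note first that, by Observation~\ref{obs:cyc}, a cyclically $4$-edge-connected cubic graph has no $1$-edge-cut, hence is bridgeless, so Theorem~\ref{thm-ef} and Petersen's theorem apply to $G$.

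Suppose $G\setminus\{e,f\}$ has a bipartition $(X,Y)$ with both ends of $e$ in $X$ and both ends of $f$ in $Y$. Then $e$ and $f$ are vertex-disjoint (a common end would have to lie in $X\cap Y$), and counting edge-ends on each side (every vertex of $G$ has degree $3$, the ends of $e$ and of $f$ have degree $2$ in $G\setminus\{e,f\}$, and the two sides of a bipartite graph span equally many edges) gives $3|X|-2=3|Y|-2$, so $|X|=|Y|=n/2$. If a perfect matching $M$ avoided $e$ and contained $f$, then $M\setminus\{f\}$ would be a matching of $G\setminus\{e,f\}$ saturating all $n/2$ vertices of $X$ but only $n/2-2$ vertices of $Y$; since every edge of $G\setminus\{e,f\}$ joins $X$ to $Y$ this is impossible. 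This proves the ``if'' direction.

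For the converse, assume $G$ has no perfect matching avoiding $e$ and containing $f$. If $e$ and $f$ shared a vertex, a perfect matching through $f$ (which exists by Theorem~\ref{thm-ef}) would automatically avoid $e$, a contradiction; so $e=uv$ and $f=xy$ are vertex-disjoint. Perfect matchings of $G$ containing $f$ restrict to perfect matchings of $H:=G\setminus x\setminus y$ and conversely, so the hypothesis says exactly that every perfect matching of $H$ uses $e$; since one such matching exists (again by Theorem~\ref{thm-ef}), $H$ has a perfect matching while $H-e$ does not. By Tutte's theorem there is $S\subseteq V(H)$ with $o\big((H-e)-S\big)\ge|S|+2$; put $W:=S\cup\{x,y\}$, so $|W|=|S|+2$ and $(H-e)-S=(G-W)-e$. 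Now $G$ has a perfect matching, so $o(G-W)\le|W|$; moreover if $o(G-W)=|W|$, then counting edges between $W$ and $G-W$ (each of the $|W|$ odd components sends an odd number $\ge3$ of edges to $W$, for a total $\ge3|W|=3|W|-2\,e(G[W])$) forces $G[W]$ to be edgeless, contradicting $f\in E(G[W])$. Hence $o(G-W)\le|W|-1$, and by parity $o(G-W)\le|W|-2$. Since deleting a single edge raises the number of odd components by at most $2$, all these estimates are tight: $o(G-W)=|W|-2$, we have $u,v\notin W$, and $e$ is a bridge of $G-W$ lying in an \emph{even} component $C$ whose two halves $C_1\ni u$ and $C_2\ni v$ are both odd.

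Repeating the edge-count with this information (each odd component of $G-W$ sends exactly $3$ edges to $W$, each $C_i$ sends an even number $\ge2$, so $C$ sends $\ge4$, any further even component sends $\ge2$, $e(G[W])\ge1$, and the grand total is $\le3|W|$) forces: $G[W]$ consists of $f$ alone, there is no even component besides $C$, each odd component sends exactly $3$ edges to $W$, and each $C_i$ sends exactly $2$. Thus $G-W$ splits into $|W|$ pairwise-disjoint ``pieces'' $D_1,\dots,D_{|W|-2},C_1,C_2$, each of which is one side of a $3$-edge-cut of $G$; by cyclic $4$-edge-connectivity none of these cuts is cyclic, and since the pieces are disjoint at most one of them can contain a cycle. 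If a piece $P$ with $e(P,G\setminus P)=3$ contained a cycle, then $G\setminus P$ would be acyclic, and since $f\in E(G\setminus P)$ a comparison of $|E(G\setminus P)|=\tfrac{3n}2-3-e(G[P])$ with $|V(G\setminus P)|-1$ gives $n\le4$; for $n\ge6$ this cannot happen, so every piece is a tree with exactly $3$ boundary edges, hence a single vertex. In particular $C_1=\{u\}$, $C_2=\{v\}$, every $D_i$ is a singleton, and $G-W$ is the single edge $e=uv$ together with $|W|-2$ isolated vertices. Then $X:=V(G)\setminus W$ and $Y:=W$ form a bipartition of $G\setminus\{e,f\}$ with $e\subseteq X$ and $f\subseteq Y$, as required; the one remaining case $n=4$ is $G=K_4$ and is checked directly.

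The bulk of the work, and the main obstacle, is this ``only if'' direction, and specifically the edge-counting step that upgrades a bare Tutte barrier into the rigid picture ``every component of $G-W$ is a single vertex except the two ends of $e$''. Making the relevant inequalities simultaneously tight and correctly ruling out that some piece carries a cycle is the delicate point where cubicity, bridgelessness and cyclic $4$-edge-connectivity must all be invoked at once.
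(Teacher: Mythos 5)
Your proof is correct and takes essentially the same approach as the paper's: reduce to the non-existence of a perfect matching of $G\setminus\{x,y\}-e$, apply Tutte's theorem to get a barrier $S$, augment it to $W=S\cup\{x,y\}$, count edges across the cut to force all inequalities to be tight, and then use cyclic $4$-edge-connectivity to conclude that every resulting component is a single vertex. The only differences are that you additionally write out the easy ``if'' direction (which the paper leaves implicit) and that in the final step you re-derive from scratch the fact that a $3$-edge-cut in a cubic graph with both sides of size at least two must be cyclic, rather than simply quoting Observation~\ref{obs:cyc}; this makes that step more laborious than necessary, but the argument is sound.
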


\begin{proof}
Let $f=uv$, and assume that the graph $H$ obtained from $G$ by
removing the vertices $u,v$ and the egde $e$ has no perfect matching.
By Tutte's theorem, there exists a subset $S$ of vertices of $H$ such
that the number $k$ of odd components of $H \setminus S$ exceeds
$|S|$. Since $H$ has an even number of vertices, we actually have $k \ge
|S|+2$.  Let $S'=S \cup \{u,v\}$. The number of edges leaving $S'$ in
$G$ is at most $3|S'|-2$ because $u$ and $v$ are joined by an edge. On
the other hand, there are at least three edges leaving each odd
component of $H \setminus S$ with a possible exception for the (at
most two) components incident with $e$ (otherwise, we obtain a cyclic
2-edge-cut in $G$). Consequently, $k=|S|+2$, and
there are three edges leaving $|S|$ odd components and two edges
leaving the remaining two odd components.  Since $G$ is cyclically
4-edge-connected, all the odd components are single vertices and $G$ has
the desired structure.
\end{proof}

The key to prove Theorem~\ref{th:main} is to show by induction that
cyclically 4-edge-connected cubic graphs have a superlinear number of
perfect matchings \emph{avoiding any given edge}. In the proof we need
to pay special attention to 3-edge-connected graphs, because we were
unable to include them in the general induction process. The next
section, which might be of independent interest for the reader, will
be devoted to the proof of Lemma~\ref{lm-3conn}, stating that 3-edge-connected
cubic graphs have a linear number of perfect matchings avoiding any
given edge that is not contained in a cyclic 3-edge-cut (this
assumption on the edge cannot be dropped).

\section{$3$-edge-connected graphs}

We now introduce the \emph{brick
and brace decomposition} of matching-covered graphs (which will only
be used in this section).
For a simple graph $G$, we call
\emph{a multiple of $G$} any multigraph whose underlying simple
graph is isomorphic to $G$. 

An edge-cut $E(A,B)$ is {\em
  tight} if every perfect matching contains precisely one edge of
$E(A,B)$. If $G$ is a connected matching-covered graph with a tight
edge-cut $E(A,B)$, then $G[A]$ and $G[B]$ are also
connected. Moreover, every perfect matching of $G$ corresponds to a
pair of perfect matchings in the graphs $G/A$ and $G/B$. Hence, both
$G/A$ and $G/B$ are also matching-covered. We say that we have
decomposed $G$ into $G/A$ and $G/B$. If any of these graphs still have
a tight edge-cut, we can keep decomposing it until no graph in the
decomposition has a tight edge-cut. Matching-covered graphs without
tight edge-cuts are called \emph{braces} if they are bipartite and
\emph{bricks} otherwise, and the decomposition of a graph $G$ obtained
this way is known as the {\em brick and brace decomposition} of $G$.

Lov{\'a}sz~\cite{bib-lovasz87} showed that the collection of graphs
obtained from $G$ in any brick and brace decomposition is unique up to
the multiplicity of edges. This allows us to speak of \emph{the} brick
and brace decomposition of $G$, as well as \emph{the} number of bricks
(denoted $b(G)$) and \emph{the} number of braces in the decomposition
of $G$. The brick and brace decomposition has the following
interesting connection
with the number of perfect matchings:

\begin{theorem}[Edmonds \emph{et al.}, 1982]
\label{thm-bb}
If $G$ is a matching-covered $n$-vertex graph with $m$ edges, then $G$
has at least $m-n+1-b(G)$ perfect matchings.
\end{theorem}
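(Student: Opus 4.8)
\medskip\noindent\textbf{Proof proposal.}\quad The plan is to work with the \emph{perfect matching polytope} $\mathrm{PMP}(G)\subseteq\mathbb{R}^{E(G)}$, the convex hull of the incidence vectors of the perfect matchings of $G$. A polytope of dimension $d$ has at least $d+1$ vertices, and every vertex of $\mathrm{PMP}(G)$ is the incidence vector of a perfect matching, so it is enough to bound $\dim\mathrm{PMP}(G)$ from below. Concretely, I would prove by induction on $n=|V(G)|$ the (marginally stronger) statement that \emph{every matching-covered $n$-vertex graph $G$ with $m$ edges has at least $m-n+2-b(G)$ perfect matchings}, from which the theorem is immediate.

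For the base case $G$ has no tight edge-cut, hence $G$ is a brick or a brace. If $G$ is a brace then $G$ is bipartite and $b(G)=0$; the incidence matrix of a connected bipartite graph is totally unimodular and has rank $n-1$, so $\mathrm{PMP}(G)=\{x\ge0:x(\delta(v))=1\text{ for all }v\in V(G)\}$, and since $G$ is matching-covered this set has a relatively interior point with all coordinates positive, whence $\dim\mathrm{PMP}(G)=m-(n-1)$ and $G$ has at least $m-n+2$ perfect matchings. If $G$ is a brick then $b(G)=1$ and $G$ is connected and non-bipartite; by definition a brick has no nontrivial tight cut, and by Edmonds' perfect matching polytope theorem the equalities valid on $\mathrm{PMP}(G)$ are exactly the equalities $x(\delta(S))=1$ over the tight cuts $\delta(S)$, i.e.\ here just $x(\delta(v))=1$ over $v\in V(G)$. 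The incidence vectors of the cuts $\delta(v)$ span a subspace of rank $n$ (connected, non-bipartite), so $\dim\mathrm{PMP}(G)=m-n$ and $G$ has at least $m-n+1=m-n+2-b(G)$ perfect matchings.

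For the inductive step, let $\delta(A)$ be a nontrivial tight cut of $G$ with cut edges $c_1,\dots,c_k$ and partition $\{A,B\}$ of $V(G)$; then $G/A$ and $G/B$ are matching-covered and have fewer vertices than $G$. Each perfect matching of $G$ contains exactly one edge of $\delta(A)$ and restricts to a perfect matching of $G/B$ and one of $G/A$ using that same edge, and conversely any two such matchings glue back to a perfect matching of $G$; hence, writing $a_i$ (resp.\ $b_i$) for the number of perfect matchings of $G/B$ (resp.\ $G/A$) containing $c_i$, one gets $|\mathrm{PM}(G)|=\sum_{i=1}^k a_ib_i$. Matching-coveredness gives $a_i\ge1$ and $b_i\ge1$, so $a_ib_i\ge a_i+b_i-1$ and therefore $|\mathrm{PM}(G)|\ge|\mathrm{PM}(G/B)|+|\mathrm{PM}(G/A)|-k$. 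Applying the induction hypothesis to $G/A$ and $G/B$ and using the identities $|V(G/A)|+|V(G/B)|=n+2$, $|E(G/A)|+|E(G/B)|=m+k$, and $b(G/A)+b(G/B)=b(G)$ (the last one by the uniqueness of the brick--brace decomposition), a short computation gives $|\mathrm{PM}(G)|\ge m-n+2-b(G)$.

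The step I expect to be the real obstacle is the brick base case: pinning down $\dim\mathrm{PMP}(G)$ for a brick needs Edmonds' description of the perfect matching polytope (to know that only the tight cuts give valid equalities) together with the structural fact that a brick, being $3$-connected and bicritical, has no nontrivial tight cut --- equivalently, that the only affine relations satisfied by all the perfect matchings of a brick are the trivial ones coming from its vertices. The other external ingredient is Lov\'asz's uniqueness of the brick--brace decomposition, used for the additivity $b(G/A)+b(G/B)=b(G)$ across a tight cut. Everything else --- the vertex count $d+1$ for a $d$-dimensional polytope and the gluing identity $|\mathrm{PM}(G)|=\sum_i a_ib_i$ --- is elementary.
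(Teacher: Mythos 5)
The paper itself gives no proof of this theorem: it is stated as a citation to Edmonds, Lov\'asz, and Pulleyblank (1982), so there is no in-paper argument to compare against. Your outline is a correct reconstruction of the original Edmonds--Lov\'asz--Pulleyblank approach, and in fact you prove the sharp form of the bound, $m-n+2-b(G)$, which is one more than the theorem statement requires and matches the ELP matching-rank formula $\dim\mathrm{PMP}(G)=m-n+1-b(G)$.

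Two small remarks. First, your inductive step is agreeably more elementary than the purely polyhedral route: once the base cases are established by dimension counting, you never need to argue about dimensions again, since the gluing identity $|\mathrm{PM}(G)|=\sum_i a_ib_i$ together with $a_ib_i\ge a_i+b_i-1$ (valid because $a_i,b_i\ge 1$ by matching-coveredness of $G/A$ and $G/B$) turns the recursion into pure counting; the bookkeeping $|V(G/A)|+|V(G/B)|=n+2$, $|E(G/A)|+|E(G/B)|=m+k$, and $b(G/A)+b(G/B)=b(G)$ then closes it. Second, you correctly identify the two genuinely nontrivial external inputs: the brick base case (that the affine hull of the perfect matching polytope of a brick is cut out exactly by the vertex equalities, so $\dim\mathrm{PMP}=m-n$), and the Lov\'asz uniqueness theorem for the brick--brace decomposition, which is what justifies the additivity $b(G/A)+b(G/B)=b(G)$ across a tight cut. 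For the brace base case, the rank-$(n-1)$ claim for the bipartite incidence matrix requires connectivity of $G$; this is fine here since matching-covered graphs are taken to be connected (as the paper does when it discusses tight-cut splittings), but it is worth saying explicitly. With those ingredients in hand, your argument is complete and correct.
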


A graph is said to be {\em bicritical} if $G\setminus\{u,v\}$ has a
perfect matching for any two vertices $u$ and $v$. Edmonds \emph{et
al.}~\cite{bib-edmonds82+} gave the following characterization of
bricks:

\begin{theorem}[Edmonds \emph{et al.}, 1982]
\label{th-brick}
A graph $G$ is a brick if and only if it is $3$-vertex-connected and
bicritical.
\end{theorem}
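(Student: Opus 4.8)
The statement to prove is the characterization of bricks: a graph $G$ is a brick if and only if it is $3$-vertex-connected and bicritical. Let me sketch both directions.

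For the forward direction, suppose $G$ is a brick, i.e.\ a matching-covered graph with no tight edge-cut that is not bipartite. The plan is to first show $G$ is bicritical. Suppose not: there are vertices $u,v$ such that $G\setminus\{u,v\}$ has no perfect matching. Apply Tutte's theorem to find a barrier $S$ (a set with more than $|S|$ odd components in $G\setminus S$) that separates things badly; by a parity argument one shows there is a nontrivial barrier $T$ in $G$ itself, and then the cut separating one odd component of $G\setminus T$ from the rest is tight (every perfect matching uses exactly one edge across it, since the component is odd and matching-covered forces tightness), contradicting that $G$ is a brick — unless the cut is trivial, which the parity/counting forces not to be. For $3$-vertex-connectedness, suppose $\{x,y\}$ is a cut; then $G\setminus\{x,y\}$ splits, and because $G$ is matching-covered and has even order one again produces a barrier and hence a tight cut, contradiction.

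For the converse, suppose $G$ is $3$-vertex-connected and bicritical. First, $G$ is matching-covered: bicriticality applied to the two endpoints of any edge $e=uv$ gives a perfect matching of $G\setminus\{u,v\}$, which together with $e$ is a perfect matching of $G$ containing $e$. It remains to show $G$ has no tight edge-cut. Suppose $E(A,B)$ is a tight edge-cut. Tightness plus matching-covered implies $G[A]$ and $G[B]$ are connected; the standard argument is that a tight cut in a matching-covered graph is obtained from a barrier or a nontrivial "$2$-separation" structure. Concretely, since every perfect matching meets $E(A,B)$ in exactly one edge, and $G$ is bicritical, one shows $|A|=1$ or $|B|=1$ (a trivial cut) is forced: if both sides have at least two vertices, pick $u\in A$ and $v\in B$ appropriately and use bicriticality of $G$ at $\{u,v\}$ to build a perfect matching avoiding $E(A,B)$ or meeting it in at least two edges, contradicting tightness — the choice of $u,v$ is where $3$-vertex-connectedness enters, ruling out that every such perfect matching is blocked.

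The main obstacle I expect is the careful parity bookkeeping in the forward direction (producing a \emph{nontrivial} barrier from a failure of bicriticality, and showing the associated cut is both tight and nontrivial) and, in the converse, the precise combinatorial argument that a tight cut in a $3$-connected bicritical graph must be trivial — this really uses Tutte/Berge-style structure theory for barriers and the ear-decomposition theory of matching-covered graphs, and is the heart of the Edmonds--Lov\'asz--Pulleyblank result. Since this theorem is quoted from~\cite{bib-edmonds82+}, the paper will presumably just cite it; a self-contained proof would route through the two-step structure above.
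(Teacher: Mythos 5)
The paper does not supply a proof of Theorem~\ref{th-brick}; it is stated as the result of Edmonds, Lov\'asz, and Pulleyblank and cited from~\cite{bib-edmonds82+}, so there is no in-paper argument to compare against. Your sketch points in the right direction but has two real gaps.

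In the forward direction, once you obtain a barrier $S$ with $|S|\ge 2$, the tight cut around an odd component $C$ of $G\setminus S$ is only nontrivial when $|C|\ge 2$, and nothing so far rules out the case that every odd component is a singleton. That case is handled not by parity but by the non-bipartiteness built into the definition of a brick: if all odd components are single vertices and there is no even component, then every perfect matching saturates $S$ against the singletons and $G$ is forced to be bipartite; otherwise there is an even component and the cut separating $S$ together with all but one singleton from the rest is a nontrivial tight cut. This is a concrete missing step, not a parity check. In the converse direction, the proposed move of picking $u\in A$ and $v\in B$ and invoking bicriticality at $\{u,v\}$ to force a perfect matching that avoids or doubly crosses $E(A,B)$ does not work as stated: a perfect matching of $G\setminus\{u,v\}$ extends to one of $G$ only when $u$ and $v$ are adjacent, so tightness of $E(A,B)$ in $G$ places no constraint on it, and parity merely says it crosses the cut an even number of times, which includes zero. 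What the converse actually requires is Lov\'asz's structure theorem that every nontrivial tight cut in a matching-covered graph is either a barrier cut or a $2$-separation cut; bicriticality makes every barrier trivial and $3$-connectedness forbids $2$-separations, and only then do the nontrivial tight cuts disappear. Without that substantial lemma the converse direction is an assertion rather than an argument.
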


It can also be proved that a brace is a bipartite graph such that for
any two vertices $u$ and $u'$ from the same color class and any two
vertices $v$ and $v'$ from the other color class, the graph
$G\setminus\{u,u',v,v'\}$ has a perfect matching,
see~\cite{bib-lovasz86+}. We finish this brief introduction to the
brick and brace decomposition with two lemmas on the number of bricks
in some particular classes of graphs.

\begin{lemma}[see~\cite{bib-previous}]
\label{lm-bb-cubic}
If $G$ is an $n$-vertex cubic bridgeless
graph, then $b(G) \le n/4$.
\end{lemma}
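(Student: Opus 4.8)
The plan is to avoid inducting on the brick-and-brace decomposition directly and instead to read the bound off from two results of Edmonds, Lov\'asz and Pulleyblank, one of which is the sharp form of the result behind Theorem~\ref{thm-bb} and the other of which is quoted in the introduction. First observe that $G$ may be assumed connected, since both $b(\cdot)$ and the number of vertices are additive over connected components; and that $G$ is matching-covered, by the first part of Theorem~\ref{thm-ef}. Now recall the Edmonds--Lov\'asz--Pulleyblank dimension formula: for every matching-covered graph $H$ with $n_H$ vertices and $m_H$ edges, the perfect matching polytope $\mathrm{PMP}(H)\subseteq\mathbb{R}^{E(H)}$ has dimension exactly $m_H-n_H+1-b(H)$ (this is proved by induction along tight cuts, using that $b$ is additive over a tight-cut decomposition while at each step the vertex count grows by $2$ and the edge count grows by the size of the cut). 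Applying this with $H=G$ and substituting $m_G=3n/2$ yields
\[
\dim\mathrm{PMP}(G)=\tfrac{3n}{2}-n+1-b(G)=\tfrac n2+1-b(G).
\]

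To finish, I would invoke the lower bound of Edmonds, Lov\'asz and Pulleyblank (following Naddef) stated in the introduction: the perfect matching polytope of any cubic bridgeless $n$-vertex graph has dimension at least $n/4+1$. Combining this with the displayed identity gives $n/4+1\le n/2+1-b(G)$, hence $b(G)\le n/4$, which is the lemma.

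It is worth saying where the difficulty would lie in a self-contained combinatorial proof, since that explains why the polytope route is preferable. The natural attempt is to induct along a nontrivial tight cut $E(A,B)$, using $b(G)=b(G/A)+b(G/B)$ together with $|V(G/A)|+|V(G/B)|=n+2$, after checking that $G/A$ and $G/B$ are matching-covered graphs which are cubic bridgeless when the cut has size $3$ and near-cubic (with a degree-two vertex to be suppressed) when it has size $2$. The trouble is that the naive estimate $b(G/A)+b(G/B)\le\tfrac14(|V(G/A)|+|V(G/B)|)=\tfrac{n+2}{4}$ leaks a constant at every split, so one would have to work with a strengthened hypothesis — morally $b(H)\le(n_H-2)/4$, with $K_4$ and a few other small bricks treated by hand — and analyse which bricks can be cleaved off (for instance, the only cubic graph that could conceivably split into two copies of $K_4$ is the triangular prism, and that graph has no nontrivial tight cut at all). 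Making this accounting balance is the main obstacle of a from-scratch argument, and it is precisely the content that the sharp Edmonds--Lov\'asz--Pulleyblank dimension estimate supplies for free.
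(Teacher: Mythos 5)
The paper itself offers no proof of this lemma; it is cited from \cite{bib-previous}, so there is no in-paper argument to compare against. Your argument is correct and, I believe, is essentially the one in the cited reference. The reduction to connected $G$ is right (both $b(\cdot)$ and $n$ are additive over components), $G$ is matching-covered by Theorem~\ref{thm-ef}, the Edmonds--Lov\'asz--Pulleyblank equality $\dim\mathrm{PMP}(G)=m-n+1-b(G)$ is the correct sharp form behind Theorem~\ref{thm-bb} (check: $K_4$ gives $6-4+1-1=2$ and has three affinely independent matchings; $K_{3,3}$ gives $9-6+1-0=4$, the dimension of the $3\times 3$ Birkhoff polytope), and combined with $m=3n/2$ and $\dim\mathrm{PMP}(G)\ge n/4+1$ from the introduction one gets $b(G)\le n/4$ immediately. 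Your aside on why a naive tight-cut induction leaks a constant at each split, and what strengthening would be needed to make it close, is accurate and shows the right instinct for why the polytope route is the clean one.

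The one thing you should address to make the argument self-sufficient is a potential circularity: since $\dim\mathrm{PMP}(G)=n/2+1-b(G)$ for cubic $G$, the inequality $\dim\mathrm{PMP}(G)\ge n/4+1$ is \emph{equivalent} to $b(G)\le n/4$. So you must check that the Edmonds--Lov\'asz--Pulleyblank/Naddef lower bound on the dimension was not itself obtained by first proving $b(G)\le n/4$ and then applying the dimension formula. It was not --- ELP establish the $n/4+1$ dimension bound directly (this is the Naddef-inspired part), not as a corollary of the brick-count bound --- so your derivation is sound, but a one-line remark to that effect would head off the obvious objection.
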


\begin{lemma}[see~\cite{bib-nous}]
\label{lm-bb-bip}
If $G$ is a bipartite matching-covered graph, then $b(G)=0$.
\end{lemma}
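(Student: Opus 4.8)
The plan is to argue by induction on $|V(G)|$, exploiting the recursive structure of the brick and brace decomposition; we may assume $G$ is connected, treating its components separately otherwise. If $G$ has no nontrivial tight edge-cut, then $G$ is itself a brick or a brace, and since it is bipartite and matching-covered it is a brace, so $b(G)=0$ and there is nothing to prove. Otherwise, choose a nontrivial tight edge-cut $E(A,B)$ and decompose $G$ into $G/A$ and $G/B$. As recalled above, both $G/A$ and $G/B$ are matching-covered, each has strictly fewer vertices than $G$, and the brick and brace decomposition of $G$ is the union of those of $G/A$ and $G/B$; hence $b(G)=b(G/A)+b(G/B)$. So it suffices to show that $G/A$ and $G/B$ are again bipartite, after which the induction hypothesis gives $b(G/A)=b(G/B)=0$.

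The heart of the matter is a short parity count. To prove bipartiteness of $G/A$, let $(X,Y)$ be the bipartition of $G$ and pick any perfect matching $M$ of $G$ (one exists because $G$ is matching-covered). Since $E(A,B)$ is tight, $M$ uses exactly one cut edge, so it saturates all but one vertex of $A$ by edges lying inside $A$; consequently $|A|$ is odd and $|X\cap A|$ and $|Y\cap A|$ differ by exactly one. Relabel the color classes so that $|X\cap A|=|Y\cap A|+1$. As this parity depends only on $G$, for \emph{every} perfect matching the unique cut edge it uses has its endpoint in $A$ lying in $X\cap A$, and hence its endpoint in $B$ lying in $Y\cap B$. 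Because $G$ is matching-covered, every edge of $E(A,B)$ belongs to some perfect matching, so in fact \emph{all} edges of $E(A,B)$ join $X\cap A$ to $Y\cap B$. Therefore placing the contracted vertex of $G/A$ on the side of $X$ extends the proper $2$-coloring $(X\cap B,\,Y\cap B)$ of $G[B]$ to a proper $2$-coloring of $G/A$. Symmetrically, putting the contracted vertex of $G/B$ on the side of $Y$ shows that $G/B$ is bipartite as well.

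Apart from this parity count, the argument is bookkeeping inside the standard theory of the brick and brace decomposition recalled before the statement; I do not expect a real obstacle. The small points deserving a line of care are that ``nontrivial tight cut'' is the correct notion to recurse along and that the number of bricks is additive with respect to such a cut --- both follow from the uniqueness of the brick and brace decomposition --- and that the recursion terminates, which is clear since $G/A$ and $G/B$ each have fewer vertices than $G$. As a cross-check, once one knows that every graph occurring in the decomposition of a bipartite matching-covered graph is itself bipartite, none of these graphs can be bicritical (deleting two vertices of the same color class unbalances the two classes), so by Theorem~\ref{th-brick} none of them is a brick, recovering $b(G)=0$.
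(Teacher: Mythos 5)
The paper does not actually prove this lemma; it is stated with a citation to~\cite{bib-nous}, so there is no in-paper proof to compare against. Your argument is correct and is essentially the standard proof. The core of the matter is exactly your parity count: since $E(A,B)$ is a tight cut, any perfect matching $M$ leaves precisely one vertex of $A$ uncovered by $M\cap E(G[A])$, whence $|A|$ is odd and one color class, say $X\cap A$, has exactly one more vertex than the other; this is a property of $G$ alone, so \emph{every} perfect matching has the $A$-end of its unique cut edge in $X\cap A$, and matching-coveredness then forces every edge of $E(A,B)$ to run from $X\cap A$ to $Y\cap B$. Consequently the contracted vertex of $G/A$ can be placed in the $X$-side and $G/A$ (and symmetrically $G/B$) is bipartite; together with the additivity $b(G)=b(G/A)+b(G/B)$, which follows from uniqueness of the decomposition, induction closes the argument. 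Your cross-check observation is also worth noting as a slight shortcut: once bipartiteness is known to propagate to every graph in the decomposition, none of them can be bicritical (removing two vertices from the same color class unbalances the bipartition), so by Theorem~\ref{th-brick} none is a brick, giving $b(G)=0$ without any further bookkeeping. Either packaging is fine; the substance of both is the parity argument you gave.
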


We now show than any 3-edge-connected cubic graph $G$ has a linear number of
perfect matchings avoiding any edge $e$ not contained in a cyclic
3-edge-cut. We consider two cases: if $G-e$ is matching-covered, we
show that its decomposition contains few bricks
(Lemma~\ref{lm-bb-3e}). If $G-e$ is not matching-covered, we show that
for some edge $f$,  $G-\{e,f\}$ is matching-covered and contains few
bricks in its decomposition (Lemma~\ref{lm-bb-3ef}).

\begin{lemma}
\label{lm-bb-3e}
Let $G$ be a $3$-edge-connected cubic graph and $e$ an edge of $G$
that is not contained in a cyclic $3$-edge-cut of $G$.  If $G-e$ is
matching-covered, then the number of bricks in the brick and brace
decomposition of $G-e$ is at most $3n/8-2$.
\end{lemma}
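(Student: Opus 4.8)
The plan is to apply the brick and brace decomposition bound (Theorem~\ref{thm-bb}) to the matching-covered graph $G-e$, which is a $\{2,2\}$-near cubic graph on $n$ vertices with $m = 3n/2 - 1$ edges. By Theorem~\ref{thm-bb}, $G-e$ has at least $m - n + 1 - b(G-e) = n/2 - b(G-e)$ perfect matchings, but here we only want the bound on $b(G-e)$ itself; the real content is the estimate $b(G-e) \le 3n/8 - 2$. The naive bound from Lemma~\ref{lm-bb-cubic} (applied morally to the cubic graph $G$, giving $b(G) \le n/4$) is not directly available because $G-e$ is not cubic, so the first thing I would do is relate the brick and brace decomposition of $G-e$ to that of $G$, or rather understand how deleting a matching-covered edge from a cubic graph affects the decomposition.

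First I would examine the structure of $G-e$ as it is decomposed along tight cuts. The two degree-$2$ vertices (the former endpoints of $e$), call them $x$ and $y$, lie in the decomposition somewhere; since every perfect matching of $G-e$ must saturate $x$ through one of its two edges and $y$ through one of its two, the tight cuts isolating small pieces around $x$ and $y$ interact with the cubic structure elsewhere. The key combinatorial point is to bound the number of bricks by a charging/counting argument: each brick in the decomposition, being $3$-vertex-connected (Theorem~\ref{th-brick}), has at least four vertices, and each decomposition step along a tight cut $E(A,B)$ replaces a graph by $G/A$ and $G/B$, contracting one side to a single vertex. I would track how vertices of $G-e$ are distributed among the final bricks and braces, using that braces contribute $0$ bricks (Lemma~\ref{lm-bb-bip}) and that bricks are vertex-disjoint ``cores'' in a suitable sense after undoing contractions. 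Combining ``each brick uses $\ge 4$ real vertices'' with the budget of $n$ vertices would give $b \le n/4$; to improve to $3n/8 - 2$ I would argue that in a cubic graph minus an edge, a brick in the decomposition cannot be as small as $K_4$ too often — the cyclic $3$-edge-cut condition on $e$ rules out certain small tight cuts, forcing most bricks to have at least, say, $6$ vertices, or forcing an extra slack of $2$ units per brick beyond the first.

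More concretely, I expect the argument runs as follows: the edge $e$ not being in a cyclic $3$-edge-cut means that $G-e$ has no nontrivial $3$-edge-cut separating a cycle from the rest in a bad way, which translates (via Observation~\ref{obs:cyc} and the definition of tightness) into a restriction that the tight cuts of $G-e$ are either trivial (isolating a single vertex or the pair $\{x,y\}$) or else cut off pieces that are themselves large. One then shows that a brick arising in the decomposition corresponds to a subset $W$ of $V(G)$ with $|W| \ge 4$, these subsets are essentially disjoint, and moreover the degree-$2$ vertices $x,y$ together with the cubic structure mean the number of edges ``saved'' forces $|W| \ge \lceil 8/3 \rceil$ on average — more precisely I would aim to show $\sum_{\text{bricks}} (|W_i| - 8/3) \le$ (small constant), i.e. each brick needs roughly $8/3$ vertices of real estate, giving $b(G-e) \le 3n/8 + O(1)$, and then sharpening the constant to $-2$ using the two degree-$2$ vertices and the fact that $K_4$ itself cannot be a brick in this decomposition (it would force a cyclic $3$-edge-cut through $e$ in $G$).

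The main obstacle, I expect, is precisely this sharpening: going from the easy $n/4$-type bound to $3n/8 - 2$ requires a careful analysis of the smallest bricks and an honest accounting of how the two vertices of degree $2$ propagate through the tight-cut decomposition — in particular ruling out, via the hypothesis that $e$ is not in a cyclic $3$-edge-cut, the configurations that would otherwise produce many tiny ($K_4$) bricks. I would handle this by induction on the number of decomposition steps (tight cuts), maintaining the invariant that a $\{2,2\}$-near-cubic or cubic matching-covered graph $H$ on $n_H$ vertices with no ``bad'' $3$-edge-cut through the special edges has $b(H) \le 3n_H/8 - c$ for an appropriate constant $c$ depending on how many degree-$2$ vertices $H$ has, and checking the base case (bricks themselves) directly. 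The other delicate point is making sure the constant ``$-2$'' is exactly achievable and not, say, $-1$ or $-3$; this I would pin down by the base case and by tracking the contraction bookkeeping carefully, but it is routine once the structural picture is in place.
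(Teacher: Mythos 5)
Your general plan---induct on the tight-cut decomposition of $G-e$, keep track of the near-cubic structure, and use the ``$e$ not in a cyclic $3$-edge-cut'' hypothesis to control the small pieces---is indeed the spirit of the paper's argument. However, several of the concrete mechanisms you sketch would not work, and one is backwards.

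First, your framing of $3n/8-2$ as a \emph{sharpening} of $n/4$ is reversed: for $n>16$, $3n/8-2$ is strictly \emph{larger} (weaker) than $n/4$. The point is not to improve $n/4$ but that Lemma~\ref{lm-bb-cubic} simply does not apply to the non-cubic graph $G-e$, and $3n/8-2$ is the best that the argument delivers. Correspondingly, your charging idea ``each brick occupies $\ge 8/3$ vertices, so $b\le 3n/8$'' starts from a weaker premise than ``each brick has $\ge 4$ vertices'' and so, even if it worked, would only re-derive a weaker bound; and it would not work anyway, because the pieces of a tight-cut decomposition do not partition the original vertex set --- each split \emph{adds} two new contraction vertices, so after $b+c-1$ splits the $b$ bricks and $c$ braces together have $n + 2(b+c-1)$ vertices. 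A naive vertex-budget argument therefore gives nothing useful; the gain has to come from the edges, via Theorem~\ref{thm-bb}, together with a genuinely refined bound on the number of bricks. Second, the invariant you propose (``$b(H)\le 3n_H/8 - c$ with $c$ depending only on the number of degree-$2$ vertices of $H$'') is not the invariant that closes the induction. The paper first splits off the two trivial tight cuts around the ends of $e$, producing two $C_4$-multiples and a graph $G'$ on $n-4$ vertices that is $\{4,4\}$-near or $\{5\}$-near cubic, and then proves a claim whose constant depends on the \emph{type} of $H$: $-1$ for cubic, $-3/4$ for $\{5\}$-near or $\{4,4\}$-near without a ``4-diamond,'' and $-1/4$ for $\{4,4\}$-near with a 4-diamond. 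The 4-diamond distinction is essential to make the induction go through and has no analogue in your sketch. Third, the paper does not ``rule out'' $K_4$ appearing: multiples of $K_4$ do appear as pieces, and the claim is explicitly stated only for $H$ not a multiple of $K_4$, with the $K_4$-pieces then absorbed into the bookkeeping with the constant $-1/4$. Finally, the real content of the proof is a long case analysis (bipartite $H$; $3$-vertex-connected brick; $2$-vertex-cut with all possible degree patterns $(3,3)$, $(3,4)$, $(4,4)$, $(3,5)$ of the cut vertices, with and without the cut edge; not bicritical), each case producing specific tight cuts whose pieces are fed back into the induction; your sketch acknowledges that ``the delicate point'' is the case analysis but does not identify any of these cases or why they are the ones that arise. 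So while the high-level strategy matches, the proposal as written has a directional error in its motivating heuristic, a vertex-charging step that does not survive contact with how tight-cut decompositions actually grow the vertex count, and omits the 4-diamond refinement and the case structure that carry the proof.
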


\begin{proof}
Let $u$ and $v$ be the end-vertices of $e$. Clearly, the edges between
$\{u\}\cup N(u)$ and the other vertices and the edges between
$\{v\}\cup N(v)$ and the other vertices form tight edges-cuts in
$G-e$. Splitting along these tight edge-cuts, we obtain two multiples
of $C_4$ and a graph $G'$ with $n-4$ vertices.  Depending whether $u$
and $v$ are in a triangle in $G$, $G'$ is either a $\{4,4\}$-near
cubic graph or a $\{5\}$-near cubic graph.

We will now keep splitting $G'$ along tight edge-cuts until we obtain
bricks and braces only. We show that any graph $H$ obtained during splitting will
be 3-edge-connected and it will be either a bipartite graph, a cubic graph, a $\{4,4\}$-near cubic
graph or a $\{5\}$-near cubic graph. Moreover, the edge $e$ will
correspond in a $\{4,4\}$-near cubic graph to an edge joining the two
vertices of degree four in $H$, and it will correspond in a
$\{5\}$-near cubic graph to a loop incident with the vertex of degree
five. 

If $H$ is a $\{4,4\}$-near cubic graph and the two vertices $u$
and $v$ of degree four have two common neighbors that are adjacent, we
say that $H$ contains a \emph{4-diamond} with end-vertices $u$ and $v$
(see the first picture of Figure~\ref{fig:c3c} for the example of a
4-diamond with end-vertices $v$ and $w$). After we construct the
decomposition, we prove the following estimate on the number of bricks
in the brick and brace decomposition of $H$:

\smallskip
{\em Claim. Assume that $H$ is not a multiple of $K_4$, and
  that it has $n_H$ vertices. Then $b(H) \le \frac38\,n_H -
  1$ if $H$ is cubic, $b(H)\le \frac38\,n_H - \frac34$ if $H$ is a
  $\{5\}$-near cubic graph or a $\{4,4\}$-near cubic graph without
  4-diamond, and $b(H)\le \frac38\,n_H - \frac14$ if $H$ contains a
  4-diamond.}
\smallskip

Observe that the claim implies that if $H$ has no 4-diamond, $b(H) \le
\frac38\,n_H - \frac12$ regardless whether $H$ is a multiple of $K_4$
or not.  To simplify our exposition, we consider the construction of
the decomposition and after each step, we assume that we have verified
the claim on the number of bricks for the resulting graphs and verify
it for the original one.

Let $H$ be a graph obtained through splitting along tight edge-cuts,
initially $H=G'$. Observe that $G'$ is $3$-edge-connected since
the edge $e$ is not contained in any cyclic $3$-edge-cut of $G$.

If $H$ is bipartite, from Lemma \ref{lm-bb-bip} we get $b(H)=0$.

\begin{figure}[htbp]
\begin{center}
\includegraphics[scale=0.7]{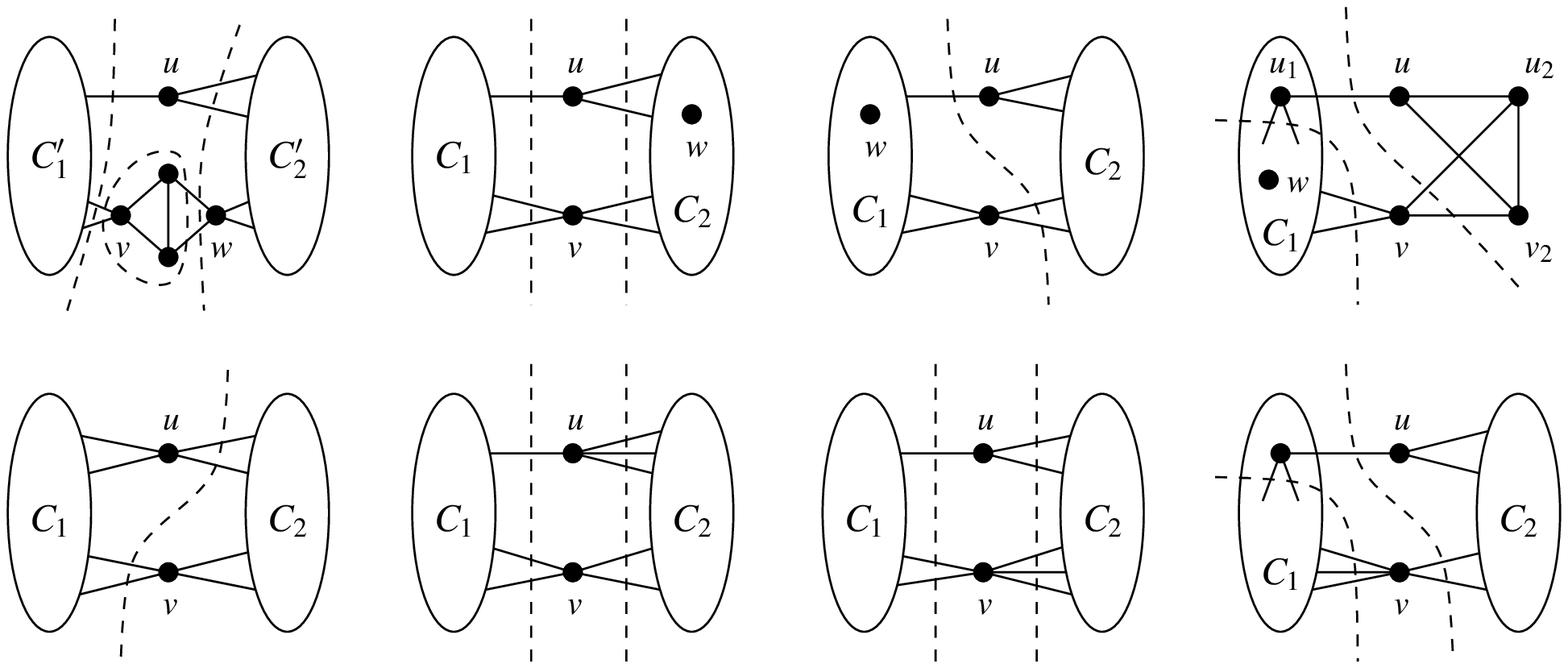}
\caption{Some cases if $H$ has a 2-vertex-cut $\{u,v\}$. The
tight edge-cuts are represented by dashed lines.\label{fig:c3c}}
\end{center}
\end{figure}

If $H$ is not bipartite, then by Theorem~\ref{th-brick} it is a brick
unless it is not $3$-vertex-connected or it is not bicritical. If it
is a brick then the inequalities of the claim are satisfied since
$n_H\ge 6$ unless $H$ is a multiple of $K_4$. Assume now that $H$ is
not $3$-vertex-connected. By the induction, the maximum degree of $H$
is at most five and since $H$ is $3$-edge-connected, it cannot contain
a cut-vertex.  Let $\{u,v\}$ be a $2$-vertex-cut of $H$. Since the sum
of the degrees of $u$ and $v$ is at most eight, the number of
components of $H\setminus\{u,v\}$ is at most two.  Let $C_1$ and $C_2$
be the two components of $H\setminus\{u,v\}$.  We now distinguish
several cases based on the degrees of $u$ and $v$ (symmetric
cases are omitted):
\begin{description}
\item[\boldmath$d_H(u)=d_H(v)=3$.\unboldmath]
     It is easy to verify that $H$ cannot be $3$-edge-connected.
\item[\boldmath$d_H(u)=3$, $d_H(v)=4$, $uv\in E(H)$.\unboldmath] It is again easy to verify that $H$ cannot
     be $3$-edge-connected.
\item[\boldmath$d_H(u)=3$, $d_H(v)=4$, $uv\notin E(H)$.\unboldmath]  
     Since $H$ is $3$-edge-connected,
     there must be exactly two edges between $v$ and each $C_i$,
     $i=1,2$.  By symmetry, we can assume that there a single edge
     between $u$ and $C_1$ and two edges between $u$ and $C_2$.
     
     Let $w$ be the other vertex of $H$ with degree four. Assume first
     that $v$ and $w$ are the end-points of a 4-diamond (this case is
     depicted in the first picture of Figure~\ref{fig:c3c}). Let
     $C_1'$ and $C_2'$ be the two components remaining in $H$ after
     removing $u$ and the four vertices of the 4-diamond. Without loss
     of generality, assume that the two neighbors of $v$ (resp. $w$)
     not in the diamond are in $C_1'$ (resp.~$C_2'$). We split $H$
     along the three following tight edge-cuts: the three edges
     leaving $C_1'$, the four edges leaving $C_2'\cup \{w\}$, and
     finally the four edges leaving $v$ and its two neighbors in the
     4-diamond.  We obtain a cubic graph $H_1$, a multiple of $K_4$, a
     multiple of $C_4$, and a $\{4,4\}$-near cubic graph $H_2$. If
     $n_1$ and $n_2$ are the orders of $H_1$ and $H_2$, we have
     $n_1+n_2=n_H-2$. By the induction, 
     $$b(H)\le 1+(\tfrac38\,n_1-\tfrac12)+(\tfrac38\,n_2-\tfrac14)\ =\
     \tfrac38\,n_H-\tfrac12 \ \le \ \tfrac38\,n_H-\tfrac14.$$

     Hence, we can assume that $H$ does not contain a 4-diamond. If
     $w$ is contained in $C_2$, both $C_1$ and $C_2$ have an odd
     number of vertices and both the cuts between $\{u,v\}$ and $C_i$,
     $i=1,2$, are tight (this case is depicted in the second picture
     of Figure~\ref{fig:c3c}).  After splitting along them, we obtain
     a multiple of $C_4$, a cubic graph of order $n_1$ and a
     $\{4,4\}$-near cubic graph of order $n_2$, such that
     $n_1+n_2=n_H$. By the induction, $$b(H) \le
     \tfrac38\,n_H-\tfrac12-\tfrac14= \tfrac38\,n_H-\tfrac34.$$

     It remains to analyse the case when $w$ is contained in $C_1$
     (this case is depicted in the third picture of
     Figure~\ref{fig:c3c}).  Splitting the graph along the tight
     edge-cut between $C_1\cup\{v\}$ and $C_2\cup\{u\}$, we obtain a
     $\{4,4\}$-near cubic graph $H_1$ which is not a multiple of $K_4$
     (otherwise $u$ would have more than one neighbor in $C_1$), and a
     cubic graph $H_2$. Observe that $H_1$ does not contain any
     4-diamond, since otherwise $H$ would contain one. If $H_2$ is not
     a multiple of $K_4$, then by the induction $b(H) \le
     \tfrac38\,(n_H+2)-\tfrac34-1 \le \tfrac38\,n_H-\tfrac34$. Assume
     now that $H_2$ is a multiple of $K_4$, and let $u_1$ be the
     neighbor of $u$ in $C_1$, and $u_2$ and $v_2$ be the vertices of
     $C_2$ (this case is depicted in the fourth picture of
     Figure~\ref{fig:c3c}). We split $H$ along the two following tight
     edge-cuts: the edges leaving $\{u,u_2,v_2\}$, and the edges
     leaving $\{u_1,u,v,u_2,v_2\}$. We obtain a multiple of $K_4$, a
     multiple of $C_4$, and a graph of order $n_H-4$, which is either
     a $\{4,4\}$-near cubic graph or a $\{5\}$-near cubic graph
     (depending whether $u_1=w$). In any case $$b(H) \le 1+
     \tfrac38\,(n_H-4)-\tfrac14 = \tfrac38\,n_H-\tfrac34.$$
\item[\boldmath$d_H(u)=d_H(v)=4$, $uv \in E(H)$.\unboldmath]
     The sizes of $C_1$ and $C_2$ must be even; otherwise, there is no
     perfect matching containing the edge $uv$. Hence, the number of
     edges between $\{u,v\}$ and $C_i$ is even and one of these cuts
     has size two, which is impossible since $H$ is $3$-edge-connected.
\item[\boldmath$d_H(u)=d_H(v)=4$, $uv\notin E(H)$.\unboldmath]
     Assume first that there are exactly two edges between each of the
     vertices $u$ and $v$ and each of the components $C_i$ (this case
     is depicted in the fifth picture of Figure~\ref{fig:c3c}). In
     this case, each $C_i$ must contain an even number of
     vertices. Hence, the edges between $C_1\cup\{u\}$ and
     $C_2\cup\{v\}$ form a tight edge-cut. Let $H_1$ and $H_2$ be the
     two graphs obtained by splitting along this tight
     edge-cut. Observe that if $H$ contains a 4-diamond, then at least
     one of $H_1$ and $H_2$ is a multiple of $K_4$. Moreover, $H_i$
     contains a 4-diamond if and only it is a multiple of
     $K_4$. Consequenty, if neither $H_1$ nor $H_2$ is a multiple of
     $K_4$, then none of $H$, $H_1$, and $H_2$ contains a
     4-diamond. Hence by the induction, $b(H) \le
     \tfrac38\,(n_H+2)-\tfrac34-\tfrac34 = \tfrac38\,n_H-\tfrac34$. If
     both $H_1$ and $H_2$ are multiples of $K_4$, then $H$ has $2 =
     \tfrac38 \times 6-\tfrac14$ bricks. Finally if exactly one of
     $H_1$ and $H_2$, say $H_2$, is a multiple of $K_4$, then $H$
     contains a 4-diamond and $H_1$ does not. Hence, $$b(H)\le
     1+\tfrac38\,(n_H-2)-\tfrac34 \le \tfrac38\,n_H-\tfrac14.$$

     If there are not exactly two edges between each of the vertices
     $u$ and $v$ and each of the components $C_i$, then we can assume
     that there is one edge between $u$ and $C_1$ and three edges
     between $v$ and $C_2$ (this case is depicted in the sixth picture
     of Figure~\ref{fig:c3c}).  Since $H$ is $3$-edge-connected, there
     are exactly two edges between $v$ and each of the components
     $C_i$, $i=1,2$.  Observe that each $C_i$ contains an odd number
     of vertices and thus the cuts between $C_i$ and $\{u,v\}$ are
     tight.  Splitting the graph along these tight edge-cuts, we
     obtain a multiple of $C_4$, a cubic graph $H_1$, and a
     $\{5\}$-near cubic graph $H_2$, of orders $n_1$ and $n_2$
     satisfying $n_1+n_2=n_H$. By the induction,
     $$b(H)\le \tfrac38\,n_1-\tfrac12+\tfrac38\,n_2-\tfrac12
     \le \tfrac38\,n_H-\tfrac34.$$
\item[\boldmath$d_H(u)=3$, $d_H(v)=5$, $uv\in E(H)$.\unboldmath]
     Since $H$ is $3$-edge-connected, the number of edges between $u$
     and each $C_i$ is one and between $v$ and each $C_i$ is
     two. Hence, both $C_1$ and $C_2$ contain an odd number of
     vertices and thus there is no perfect matching containing the
     edge $uv$ which is impossible since $H$ is
     matching-covered.
   \item[\boldmath$d_H(u)=3$, $d_H(v)=5$, $uv \notin
     E(H)$.\unboldmath] By symmetry, we can assume that there is one
     edge between $C_1$ and $u$ and two edges between $C_2$ and
     $v$. We have to distinguish two cases: there are either two or
     three edges between $C_1$ and $v$ (other cases are excluded by
     the fact that $H$ is $3$-edge-connected).

     If there are two edges between $C_1$ and $v$, the number of
     vertices of both $C_1$ and $C_2$ is odd (this case is depicted in
     the seventh picture of Figure~\ref{fig:c3c}). Hence, both the
     edge-cuts between $C_i$, $i=1,2$, and $\{u,v\}$ are tight.  The
     graphs obtained by splitting along these two edge-cuts are a
     multiple of $C_4$, a cubic graph $H_1$, and a $\{5\}$-near cubic
     graph $H_2$, of orders $n_1$ and $n_2$ satisfying
     $n_1+n_2=n_H$. By the induction,
     $$b(H)\le \tfrac38\,n_1-\tfrac12+\tfrac38\,n_2-\tfrac12
     \le \tfrac38\,n_H-\tfrac34.$$

     If there are three edges between $C_1$ and $v$, the edge-cut
     between $C_1\cup\{v\}$ and $C_2\cup\{u\}$ is tight (this case is
     depicted in the seventh and last picture of
     Figure~\ref{fig:c3c}). Splitting along this edge-cut, we obtain a
     $\{5\}$-near cubic graph $H_1$ which is not a multiple of $K_4$
     (the underlying simple graph has a vertex of degree two), and a
     cubic graph $H_2$, of orders $n_1$ and $n_2$ satisfying
     $n_1+n_2=n_H+2$. Let $u'$ be the new vertex of $H_1$ and let
     $u_1$ be its neighbor in $C_1$. Observe that the edges leaving
     $\{u',u_1,v\}$ form a tight edge-cut in $H_1$. Splitting along it
     we obtain a $\{5\}$-near cubic graph $H'_1$ of odred $n_1-2$ and
     a multiple of $C_4$. Hence, by induction,
     $$b(H)\le \tfrac38\,(n_1-2)-\tfrac12+\tfrac38\,n_2-\tfrac12
     \le \tfrac38\,n_H-\tfrac34.$$
\end{description}

It remains to analyse the case when $H$ is $3$-vertex-connected but
not bicritical.  Let $u$ and $u'$ be two vertices of $H$ such that
$H\setminus\{u,u'\}$ has no perfect matching.  Hence, there exists a
subset $S$ of vertices of $H$, $\{u,u'\}\subseteq S$, $|S|=k\ge 3$,
such that the number of odd components of $H\setminus S$ is at least
$k-1$. Since the order of $H$ is even, the number of odd components of
$H\setminus S$ is at least $k$. An argument based on counting the
degrees of vertices yields that there are exactly $k$ components of
$G\setminus S$; let $C_1,\ldots,C_k$ be these components. Clearly, for
each $i=1,\dots,k$ the cut between the component $C_i$ and the set $S$
is a tight edge-cut.  Let $H_i$ be the graph containing $C_i$ obtained
by splitting the cut; let $H_0$ be the graph containing vertices from
$S$ obtained after splitting all these cuts. Let $n_i$ be the order of
$H_i$. Clearly, $n_0=2k$ and $\sum_{i=1}^k n_i=n_H$.  An easy counting
argument yields that the number of edges joining $S$ and $H\setminus
S$ is between $3k$ and $3k+2$, hence, all the graphs $H_i$
($i=0,\dots,k$) are cubic, $\{4,4\}$-near cubic or $\{5\}$-near
cubic. However, at most two graphs $H_i$ are $\{4,4\}$-near cubic (or
one is $\{5\}$-near cubic).

If $H_0$ is bipartite, then $b(H_0)=0$ and applying the induction to
each $H_i$, we obtain that $H$ has at most
$$\tfrac38\,(n_1+\dots+n_k)-(k-2)\cdot\tfrac12-2\cdot\tfrac14= \tfrac38\,n_H-\tfrac12\,(k-1)
$$ bricks. Since $k \ge 3$, we have $b(H) \le
\tfrac38\,n_H-1$.

If $H_0$ is not bipartite, then all the $k$ tight edge-cuts are 3-edge-cuts, moreover, $H_0$ is a $\{4,4\}$-near cubic or
$\{5\}$-near cubic graph and all the graphs $H_i$ ($i=1,\dots,k$) are cubic. Applying the induction to each $H_i$ (including $H_0$), we obtain that $H$ has at most
$$\tfrac38\,(n_0+n_1+\dots+n_k)-k\cdot1-\tfrac14= \tfrac38\,n_H-\tfrac14\,(k+1)
$$ bricks. Since $k \ge 3$, we have $b(H) \le
\tfrac38\,n_H-1$, which finishes the proof of the claim.\\

As a consequence, using that $G'$ has $n-4$ vertices, we obtain that
the brick and brace decomposition of $G-e$ contains at most
$\tfrac38\,(n-4)-\tfrac14 = \tfrac38\,n-2$ bricks. Note that we made
sure troughout the proof, by induction, that all the graphs obtained
by splitting cuts are 3-edge-connected and are either bipartite,
cubic, $\{4,4\}$-near cubic, or $\{5\}$-near cubic.
\end{proof}

We now consider the case that $G-e$ is not matching-covered. Before
proving Lemma~\ref{lm-bb-3ef}, we will introduce the perfect matching
polytope of graphs.

The {\em perfect matching polytope} of a graph $G$ is the
convex hull of characteristic vectors of perfect matchings of $G$. The
sufficient and necessary conditions for a vector $w\in\mathbb{R}^{E(G)}$ to
lie in the perfect matching polytope are known~\cite{bib-edmonds65}:

\begin{theorem}[Edmonds, 1965]
\label{thm-polytope}
If $G$ is a graph, then a vector $w\in\mathbb{R}^{E(G)}$ lies in the perfect
matching polytope of $G$ if and only if the following holds:

\begin{itemize}
\item[(i)]$w$ is non-negative,

\item[(ii)]for every
vertex $v$ of $G$ the sum of the entries of $w$ corresponding to the
edges incident with $v$ is equal to one, and

\item[(iii)]for every set $S \subseteq V(G)$, $|S|$ odd,
  the sum of the entries corresponding to edges having exactly one
  vertex in $S$ is at least one.
\end{itemize}
\end{theorem}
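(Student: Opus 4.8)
The plan is to establish the ``only if'' direction by inspection and the ``if'' direction by taking a minimal counterexample and reducing. For ``only if'': the characteristic vector $\chi^M$ of a perfect matching $M$ plainly satisfies (i) and (ii), and it satisfies (iii) because $M$ must contain an edge leaving any odd vertex set; since (i)--(iii) describe a polyhedron which is bounded by (ii) (forcing $0\le w_e\le 1$), this polyhedron is a closed convex set containing every $\chi^M$ and hence their convex hull.

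For ``if'', write $P=P(G)$ for the polytope defined by (i)--(iii) and suppose, for contradiction, that $G$ is a graph with $P(G)$ strictly larger than the perfect matching polytope and with $|V(G)|+|E(G)|$ minimal. Then $P(G)$ has a vertex $w^*$ that is not a convex combination of perfect matchings. First I would reduce: $G$ may be assumed connected and $|V(G)|$ even (otherwise $P=\emptyset$); if $w^*_e=0$ for some edge $e$ then $w^*$ restricts to a point of $P(G-e)$ that is still not a convex combination of perfect matchings of $G-e$ (each of which is one of $G$), contradicting minimality, so $w^*>0$ on every edge; and $G$ then has no vertex of degree one, since at such a vertex (ii) forces value $1$ on its edge and hence $0$ on a second edge, unless $G=K_2$, which is not a counterexample. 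In particular $|E(G)|\ge |V(G)|$.

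I would then branch on whether some odd set $S$ with $3\le |S|\le |V(G)|-3$ is \emph{tight}, i.e.\ $\sum_{e\in\delta(S)}w^*_e=1$ (singletons and their complements are automatically tight by (ii) and yield nothing). If such an $S$ exists, form $G_1=G/(V(G)\setminus S)$ and $G_2=G/S$ with new vertices $s_1,s_2$; each is smaller, hence not a counterexample, and one checks that $w^*$ restricted to the edge set of $G_i$ (the edges inside the relevant side plus $\delta(S)$) satisfies (i)--(iii) for $G_i$ --- condition (ii) at $s_i$ being exactly the tightness of $S$, and condition (iii) for an odd set through $s_i$ translating to (iii) for the matching odd set of $G$, which is where one uses that $|V(G)|$ is even and $|S|$ odd so that $|V(G)\setminus S|$ is odd. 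Writing $w^*|_{E(G_1)}=\sum_M\lambda_M\chi^M$ and $w^*|_{E(G_2)}=\sum_N\mu_N\chi^N$ over perfect matchings, each $M$ and $N$ meets $\delta(S)$ in exactly one edge and, for each $e\in\delta(S)$, the total weight of those meeting $\delta(S)$ in $e$ is $w^*_e$ on both sides. I would couple the decompositions by assigning, to a pair $(M,N)$ meeting $\delta(S)$ in a common edge $e$, weight $\lambda_M\mu_N/w^*_e$ to the perfect matching $M\cup N$ of $G$; a short computation shows these are non-negative, sum to $1$ (using tightness of $S$), and reconstruct $w^*$, a contradiction. If no such $S$ is tight, then every tight constraint at $w^*$ lies in the span of the $|V(G)|$ rows of (ii); since $w^*$ is a vertex these must have rank at least $|E(G)|$, yet the vertex--edge incidence system of a connected graph has rank $|V(G)|$ or $|V(G)|-1$ according as the graph is non-bipartite or bipartite, so $|E(G)|\le |V(G)|$. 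With $|E(G)|\ge |V(G)|$ this forces $|E(G)|=|V(G)|$ and $G$ non-bipartite, hence $G$ is connected with a unique cycle, necessarily odd; as $|V(G)|$ is even, $G$ strictly contains that cycle and therefore has a pendant vertex of degree one, a contradiction.

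The hard part will be the first branch --- specifically, the parity bookkeeping that makes odd sets of $G_i$ correspond to odd sets of $G$ so that the restrictions of $w^*$ still satisfy (iii), and the verification that the edgewise coupling along $\delta(S)$ returns $w^*$ exactly rather than merely some feasible vector. The second branch is a routine extremal-point count once the reductions $w^*>0$, $G$ connected, and minimum degree at least two are in place.
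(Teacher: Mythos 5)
The paper does not include a proof of Theorem~\ref{thm-polytope}; it is stated as a quoted classical result and cited from Edmonds (1965), so there is no in-paper argument to compare against. That said, your proof is a correct, self-contained version of the standard argument. The ``only if'' direction is routine, and the ``if'' direction via a minimal counterexample is handled properly in both branches: the reductions $w^*>0$ and minimum degree at least two are justified; in the tight-cut branch, the parity bookkeeping (an odd set of $G_i$ containing $s_i$ has an odd complement contained in the opposite side of the cut, because $|V(G_i)|$ is even) correctly transfers condition~(iii), and the coupling weights $\lambda_M\mu_N/w^*_e$ along $\delta(S)$ are nonnegative, sum to one using $\sum_{e\in\delta(S)}w^*_e=1$, and reconstruct $w^*$ on every edge; in the no-tight-cut branch, the rank argument forces $|E(G)|=|V(G)|$ with $G$ nonbipartite and connected, which together with minimum degree two makes $G$ an odd cycle, impossible since $|V(G)|$ is even. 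A tiny simplification: in the degree-one reduction, once $w^*>0$ is in hand, a degree-one vertex $v$ with neighbor $u$ forces every other edge at $u$ to have weight zero, so $u$ also has degree one and $G=K_2$ directly; the ``second edge'' phrasing is slightly indirect but not wrong.
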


\noindent It is also well-known that conditions (i) and (ii) are
necessary and sufficient for a vector to lie in the perfect matching
polytope of a bipartite graph $G$. \\

We now use these notions to prove the following result:

\begin{lemma}
\label{lm-bb-3ef}
Let $G$ be a $3$-edge-connected cubic graph $G$ and
$e$ an edge of $G$ such that $e$ is not contained in any cyclic
$3$-edge-cut of $G$. If $G- e$ is not matching-covered,
then there exists an edge $f$ of $G$ such that $G-\{e,f\}$
is matching-covered and the number of bricks in the brick and brace
decomposition of $G-\{e,f\}$ is at most $n/4-1$.
\end{lemma}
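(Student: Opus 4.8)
\noindent The plan is as follows. Write $e=uv$. By Theorem~\ref{thm-ef}, $G$ has a perfect matching avoiding $e$, so $G-e$ has a perfect matching; and since $G-e$ is not matching-covered there is an edge $f$ lying in no perfect matching of $G-e$. If $f$ shared an endpoint with $e$, then every perfect matching of $G$ through $f$ would avoid $e$ and hence be a perfect matching of $G-e$, so $f$ would lie in no perfect matching of $G$ at all, contradicting Theorem~\ref{thm-ef}; thus $e$ and $f$ are vertex-disjoint, say $f=xy$. The overall strategy is: (1) use Tutte's theorem to pin down a rigid structure around $f$; (2) peel off a family of tight $3$-edge-cuts, each bounding a cubic bridgeless ``piece''; (3) show that what remains is bipartite; (4) add up, using Lemma~\ref{lm-bb-cubic} on the pieces and Lemma~\ref{lm-bb-bip} on the bipartite core.

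\medskip
\noindent For step (1): since $f$ lies in no perfect matching of $G-e$, deleting $x$ and $y$ from $G-e$ leaves a graph with no perfect matching, so Tutte's theorem provides a set $S$ with more odd components than $|S|$; put $S'=S\cup\{x,y\}$, $k=|S'|$, and let $C_1,\dots,C_m$ be the components of $(G-e)\setminus S'$, of which (by a parity count) at least $k$ are odd. A degree count in $G$ using $3$-edge-connectivity — each $C_i$ sends at least $3-[\,e\text{ has an endpoint in }C_i\,]$ edges to $S'$ — together with the fact that $f$ is an edge inside $S'$, forces all slack to vanish: $m=k$, every $C_i$ is odd, $f$ is the \emph{only} edge inside $S'$, and $e$ joins two distinct components, say $C_1$ and $C_2$. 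But then $\delta_G(C_1)$ consists of exactly three edges, one of them $e$; since $e$ lies in no cyclic $3$-edge-cut, Observation~\ref{obs:cyc} forces $|C_1|=1$, and likewise $|C_2|=1$. Hence $C_1=\{u\}$, $C_2=\{v\}$, each of $u,v$ sends two edges to $S'$, and each $C_l$ with $3\le l\le k$ sends exactly three edges to $S'$; in particular $n=k+2+\sum_{l\ge 3}|C_l|$.

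\medskip
\noindent For steps (2)--(4): I would first check that $G-\{e,f\}$ is matching-covered — equivalently, that $f$ is the only forbidden edge of $G-e$, since every perfect matching of $G-e$ already avoids $f$ — either by a further barrier argument or by exhibiting a perfect matching through each edge once the decomposition below is available. Next, for $3\le l\le k$ the cut $\delta(C_l)$ is a \emph{tight} edge-cut of $G-\{e,f\}$: it has three edges and $|C_l|$ is odd, so any perfect matching uses an odd number of them, while using all three would require more than $k=|S'|$ vertices of $S'$ to be matched outside $S'$ — three from $\delta(C_l)$, at least one from each of the remaining $k-3$ odd components, plus the matches of $u$ and $v$ — which is impossible. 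Splitting $G-\{e,f\}$ along these tight cuts produces, for each $l$, the graph $G_l:=G/(V(G)\setminus C_l)$, which is cubic and bridgeless on $|C_l|+1$ vertices, so $b(G_l)\le (|C_l|+1)/4$ by Lemma~\ref{lm-bb-cubic}; what remains is a core $R$ on $2k$ vertices, namely $G^{*}-\{e,f\}$, where $G^{*}$ is the cubic graph obtained from $G$ by contracting each $C_l$. Since $b$ is additive across tight cuts, $b(G-\{e,f\})=b(R)+\sum_{l\ge 3}b(G_l)$. The final point is that $f$ remains a forbidden edge of $G^{*}-e$, and the idea is to invoke Lemma~\ref{lm-special} to deduce that $R$ is bipartite, whence $b(R)=0$ by Lemma~\ref{lm-bb-bip}. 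Summing, $b(G-\{e,f\})\le \tfrac14\big(\sum_{l\ge 3}|C_l|+(k-2)\big)=\tfrac14(n-4)=\tfrac n4-1$.

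\medskip
\noindent The main obstacle is the bipartiteness of the core $R$: Lemma~\ref{lm-special} is stated only for cyclically $4$-edge-connected ambient graphs, whereas $G^{*}$, obtained by contracting only the components $C_l$, may still carry cyclic $3$-edge-cuts. So one must either first contract \emph{all} cyclic $3$-edge-cuts of $G$ (noting that $e$ survives, being in no cyclic $3$-edge-cut, and dealing with $f$ separately if it does not), checking that this does not disturb the vertex count in the final estimate, or else prove the bipartiteness of $R$ directly from the explicit description of $S'$, $u$, $v$ and their attachment edges. Keeping track of matching-coveredness at every stage of the splitting is the second technical point that needs care.
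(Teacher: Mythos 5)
Your structural analysis via Tutte's theorem and degree counting matches the paper's, as does the strategy of peeling off tight cuts $\delta(C_l)$ and summing up brick counts; the final arithmetic $\tfrac14\bigl(\sum_{l\ge 3}|C_l|+(k-2)\bigr)=\tfrac n4-1$ is correct. However, two steps you flag as ``to be checked'' are genuine gaps, and one of them is the heart of the paper's proof.

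The bipartiteness of the core $R$ is the easier gap. You want to invoke Lemma~\ref{lm-special}, but as you yourself note, it does not apply since $G^*$ need not be cyclically $4$-edge-connected. The fix is much simpler and already implicit in what you established: since $f$ is the \emph{only} edge inside $S'$, deleting $f$ makes $S'$ an independent set of $R$; and $\{u,v,w_3,\ldots,w_k\}$ is independent because distinct components of $(G-e)\setminus S'$ are non-adjacent and $e$ has been removed. So $R$ is bipartite by direct inspection, no auxiliary lemma needed.

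The serious gap is matching-coveredness. You never show that $G-\{e,f\}$ (equivalently, that the bipartite core $R$) is matching-covered; your suggestion to ``exhibit a perfect matching through each edge once the decomposition below is available'' is circular, since the brick and brace decomposition is defined only for matching-covered graphs. This is precisely where the main technical work of the paper's proof lies: it builds an auxiliary flow network on $R$ (each edge of $R$ replaced by a directed edge of capacity $2$ from the $u,u'$-side to the $v,v'$-side and one of capacity $1$ the other way, plus a source joined to $u,u'$ and a sink joined from $v,v'$), proves a flow of value four exists by a counting-mod-$3$ argument that uses the $2$-edge-connectivity of $R$ (itself a consequence of $e$ not lying in a cyclic $3$-edge-cut of $G$), and then perturbs the uniform fractional matching by the flow to obtain a strictly positive vector in the perfect matching polytope of $R$. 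Without an argument of this kind the claimed bound on $b(G-\{e,f\})$ is not established, and the proposal does not supply one.
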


\begin{proof}
Since $G$ is not matching-covered, there exists an edge $f$ that
is contained in no perfect matching avoiding $e$. Since $G$ is matching
covered, $e$ and $f$ are vertex-disjoint.
Let $u$ and $u'$
be the end-vertices of $f$ and let $G'$ be the graph $G\setminus\{u,u'\}-e$.
By Tutte's theorem, there exists a subset $S'\subseteq V(G')$ such that
the number of odd components of the graph $G'\setminus S'$ is at least
$|S'|+1$. Since the number of vertices of $G'$ is even, the number
of odd components of $G'\setminus S$ is at least $|S'|+2$.

Let $S$ be the set $S'\cup\{u,u'\}$. The number of edges between $S$ and
$\overline{S}$ is at most $3|S|-2$ since the vertices $u$ and $u'$ are
joined by an edge. On the other hand, the number of edges
leaving $\overline{S}$ must be at least $3|S|-2$ since the graph $G$
is $3$-edge-connected and the equality can hold only if
the edge $e$ joins two different odd components of $(G-e)\setminus S$,
these two components have two additional edges leaving them and
all other components are odd components with exactly three edges leaving
them. Let $C_1$ and $C_2$ be the two components incident with $e$ and
let $C_3,\ldots,C_{|S|}$ be the other components. Since $e$ is contained
in no cyclic $3$-edge-cut of $G$, the components $C_1$ and $C_2$ are
single vertices.

Let $H$ be the graph obtained from $G-\{e,f\}$ by contracting the components
$C_3,\ldots,C_{|S|}$ to single vertices, and let $H_i$, $i=3,\ldots,|S|$
be the graph obtained from $C_i$ by introducing a new vertex incident
with the three edges leaving $C_i$. Each $H_i$, $i=3,\ldots,|S|$ is
matching-covered since it is a cubic bridgeless graph.
Since perfect matchings of $H_i$ combine with perfect matchings of $H$,
it is enough to show that the bipartite graph $H$
is matching-covered to establish that $G-\{e,f\}$ is matching-covered.

Observe that $H$ is $2$-edge-connected: otherwise, the bridge of $H$
together with $e$ and $f$ would form a cyclic $3$-edge-cut of $G$.

Let $v$ and $v'$ be the end-vertices of the edge $e$.  We construct an
auxiliary graph $H_0$ as follows: let $U$ and $V$ be the two color
classes of $H$, $U$ containing $u$ and $u'$ and $V$ containing $v$ and
$v'$.  Replace each edge of $H$ with a pair of edges, one directed
from $U$ to $V$ whose capacity is two and one directed from $V$ to $U$
whose capacity is one. In addition, introduce new vertices $u_0$ and
$v_0$.  Join $u_0$ to $u$ and $u'$ with directed edges of capacity two
and join $v$ and $v'$ to $v_0$ with directed edges of capacity two.

\begin{figure}[ht]
\begin{center}
\includegraphics[scale=1]{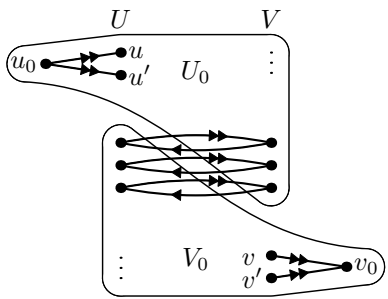}
\caption{A graph with no flow from $u_0$ to $v_0$ of order four.\label{fig:flow}}
\end{center}
\end{figure}
We claim that there exists a flow from $u_0$ to $v_0$ of order four.
If there is no such flow, the vertices of $H_0$ can be partitioned
into two parts $U_0$ and $V_0$, $u_0\in U_0$ and $v_0\in V_0$, such that
the sum of the capacities of the edges from $U_0$ to $V_0$ is at most three.
The fact that $H$ is $2$-edge-connected implies that
$\{u,u'\}\subseteq U_0$ and $\{v,v'\}\subseteq V_0$.
Hence, the number of edges between $U_0$ and $V_0$ must be at least three
since the edges between $U_0$ and $V_0$ correspond to an edge-cut in $G$.
Since the sum of the capacities of these edges is at most three,
all the three edges from $U_0$ to $V_0$ are directed from $V$ to $U$, see Figure \ref{fig:flow} for illustration.
However, the number of edges between $U\cap U_0$ and $V\cap U_0$ in $H$
is equal to $1$ modulo three based on counting incidences
with the vertices of $U\cap U_0$ and equal to $0$ modulo three based
on counting incident with vertices of $V\cap U_0$, which is impossible. This finishes
the proof of the existence of the flow.

Fix a flow from $u_0$ to $v_0$ of order four.
Let $ww'$ be an edge of $H$ with $w\in U$ and $w'\in V$.
Assign the edge $ww'$ weight of $1/3$, increase this weight
by $1/6$ for each unit of flow flowing from $w$ to $w'$ and
decrease by $1/6$ for each unit of flow from $w'$ to $w$.
Clearly, the final weight of $ww'$ is $1/6$, $1/3$, $1/2$ or $2/3$.
It is easy to verify that the sum of edges incident with each vertex
of $H$ is equal to one. In particular, the vector with entries
equal to the weights of the edges belongs to the perfect matching
polytope. Since all its entries are non-zero, the graph $H$
is matching-covered.

Let $n_i$ be the number of vertices of $C_i$, $i=3,\ldots,|S|$.
Since $H$ is bipartite, its brick and brace decomposition contains
no bricks by Lemma~\ref{lm-bb-bip}. The number of bricks in the brick and
brace decomposition of $C_i$ is at most $n_i/4$ by Lemma~\ref{lm-bb-cubic}.
Since $n_3+\ldots+n_{|S|}$ does not exceed $n-4$,
the number of bricks in the brick and brace decomposition of $G-\{e,f\}$
is at most $n/4-1$.
\end{proof}

\begin{lemma}
\label{lm-3conn}
Let $G$ be an $n$-vertex $3$-edge-connected cubic graph $G$
and $e$ an edge of $G$ that is not contained in any cyclic
$3$-edge-cut of $G$.  The number of perfect matchings of $G$ that
avoids $e$ is at least $n/8$.
\end{lemma}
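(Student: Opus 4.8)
The plan is to combine the bounds on the number of bricks proved in Lemmas~\ref{lm-bb-3e} and~\ref{lm-bb-3ef} with the Edmonds--Lov\'asz--Pulleyblank inequality of Theorem~\ref{thm-bb}, splitting into two cases according to whether $G-e$ is matching-covered. First note that since $G$ is $3$-edge-connected and cubic, $G-e$ is $2$-edge-connected, hence connected; moreover the perfect matchings of $G$ avoiding $e$ are exactly the perfect matchings of the graph $G-e$, and the perfect matchings of $G-\{e,f\}$ (for any edge $f$ vertex-disjoint from $e$) form a subset of the perfect matchings of $G$ avoiding $e$. So it suffices to count perfect matchings of $G-e$ or of a suitable $G-\{e,f\}$.

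Suppose first that $G-e$ is matching-covered. Then $G-e$ has $n$ vertices and, since exactly the two end-vertices of $e$ have degree two, $m=\tfrac32 n-1$ edges. By Lemma~\ref{lm-bb-3e}, $b(G-e)\le \tfrac38 n-2$, so Theorem~\ref{thm-bb} gives that $G-e$ has at least $m-n+1-b(G-e)\ge \bigl(\tfrac32 n-1\bigr)-n+1-\bigl(\tfrac38 n-2\bigr)=\tfrac18 n+2$ perfect matchings, hence $G$ has at least $n/8$ perfect matchings avoiding $e$.

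Now suppose $G-e$ is not matching-covered. By Lemma~\ref{lm-bb-3ef} there is an edge $f$ (necessarily vertex-disjoint from $e$, as $G$ is matching-covered) such that $G-\{e,f\}$ is matching-covered and $b(G-\{e,f\})\le \tfrac14 n-1$. The graph $G-\{e,f\}$ has $n$ vertices and, since exactly four vertices have degree two, $m=\tfrac32 n-2$ edges, so Theorem~\ref{thm-bb} yields at least $m-n+1-b(G-\{e,f\})\ge \bigl(\tfrac32 n-2\bigr)-n+1-\bigl(\tfrac14 n-1\bigr)=\tfrac14 n$ perfect matchings of $G-\{e,f\}$, each of which is a perfect matching of $G$ avoiding both $e$ and $f$; in particular $G$ has at least $n/4\ge n/8$ perfect matchings avoiding $e$. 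This finishes both cases.

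I do not expect a genuine obstacle here: all the real work is carried out in Lemmas~\ref{lm-bb-3e} and~\ref{lm-bb-3ef}, and the only points requiring care are getting the edge counts of $G-e$ and $G-\{e,f\}$ right and making sure the graphs being fed to Theorem~\ref{thm-bb} are indeed matching-covered (which is exactly what the two lemmas guarantee). The slightly weaker constant $n/8$ in the statement, rather than the $n/8+2$ and $n/4$ obtained above, simply absorbs the two cases into one clean bound.
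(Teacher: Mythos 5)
Your proposal is correct and follows exactly the same two-case strategy as the paper's proof: apply Lemma~\ref{lm-bb-3e} plus Theorem~\ref{thm-bb} when $G-e$ is matching-covered, and Lemma~\ref{lm-bb-3ef} plus Theorem~\ref{thm-bb} otherwise, with the same edge counts $3n/2-1$ and $3n/2-2$ and the same final bounds $n/8+2$ and $n/4$.
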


\begin{proof}
If $G-e$ is matching-covered, then $b(G-e) \le 3n/8-2$ by
Lemma~\ref{lm-bb-3e}.  By Theorem~\ref{thm-bb}, the number of perfect
matchings of $G-e$ is at least
$$3n/2-1-n+1-(3n/8-2)=n/8+2\ge n/8\;\mbox{.}$$ If $G-e$ is not
matching-covered, then there exists an edge $f$ such that $G-\{e,f\}$
is matching-covered and the number of bricks in the brick and brace
decomposition of $G-\{e,f\}$ is at most $n/4-1$ by
Lemma~\ref{lm-bb-3ef}.  Theorem~\ref{thm-bb} now yields that the
number of perfect matchings of $G-\{e,f\}$ is at least
$$3n/2-2-n+1-(n/4-1)=n/4 \ge n/8\;\mbox{.}$$
\end{proof}

\section{Structure of the proof of Theorem~\ref{th:main}}

The proof is comprised by a series of lemmas -- they are referenced by
pairs X.$a$ or triples X.$a$.$b$, where
$\mbox{X}\in\{\mbox{A},\mbox{B},\mbox{C}, \mbox{D},\mbox{E}\}$ and
$a=0,1,\ldots$ and $b=1,2,\ldots$.  In the proof of Lemma~Y.$c$ or
Lemma~Y.$c$.$d$, we use Lemmas~X.$a$ and Lemmas~X.$a.b$ with either
$a<c$ or $a=c$ and X alphabetically preceeding Y.  The base of the
whole proof is thus formed by Lemmas~A.$0$, B.$0$, C.$0$.$b$, D.$0$.$b$ and
E.$0$.$b$, $b\in\{1,2,\dots\}$.

\medskip

\begin{lml}{A}
  There exists $\beta\ge 0$ such that any $3$-edge-connected
  $n$-vertex cubic graph $G$ contains at least $(a+3)n/24-\beta$
  perfect matchings.
\end{lml}

\begin{lml}{B}
There exists $\beta\ge 0$ such that any $n$-vertex bridgeless
cubic graph $G$ contains at least $(a+3)n/24-\beta$ perfect matchings.
\end{lml}

\begin{lmlk}{C}
There exists $\beta\ge 0$ such that for any cyclically $5$-edge-connected
cubic graph $G$ and any edge $e$ of $G$, the number of perfect matchings
of an arbitrary $b$-expansion of $G$ with $n$ vertices that avoid the edge $e$
is at least $(a+3)n/24-\beta$.
\end{lmlk}

\begin{lmlk}{D}
There exists $\beta\ge 0$ such that for any cyclically $4$-edge-connected
cubic graph $G$ and any edge $e$ of $G$ that is not contained in any cyclic
$4$-edge-cut of $G$, the number of perfect matchings
of an arbitrary $b$-expansion of $G$ with $n$ vertices that avoid the edge $e$
is at least $(a+3)n/24-\beta$.
\end{lmlk}

\begin{lmlk}{E}
There exists $\beta\ge 0$ such that for any cyclically $4$-edge-connected
cubic graph $G$ and any edge $e$ of $G$, the number of perfect matchings
of an arbitrary $b$-expansion of $G$ with $n$ vertices that avoid the edge $e$
is at least $(a+3)n/24-\beta$.
\end{lmlk}

The series A, B, C, D, and E of the lemmas will be proved in
Sections~\ref{section-A}, \ref{section-B}, \ref{section-C},
\ref{section-D}, and~\ref{section-E},
respectively. Section~\ref{sec:cutc4c} will be devoted to the study of
the connectivity of graphs obtained by cutting cyclically
4-edge-connected graphs into pieces.

\section{Proof of A-series of lemmas}\label{section-A}

\begin{proof}[Proof of Lemma A.$a$]
If $a=0$, the claim follows from Theorem~\ref{thm-half} with
$\beta=0$.  Assume that $a>0$.  Let $\beta_A$ be the constant from
Lemma A.$(a-1)$ and $\beta_E$ the constant from Lemma E.$(a-1)$.$b$,
where $b$ is the smallest integer such that
$$2^{b/\jmenovatel}\ge \tfrac{a+3}{24}\,b+3\;\mbox{.}$$ Let $\beta$ be the
smallest integer larger than $2\beta_A+12$ and $3\beta_E/2$ such that
$$2^{n/\jmenovatel}\ge \tfrac{a+3}{24}\,n-\beta$$
for every $n$.

We aim to prove with this choice of constants that any
$3$-edge-connected $n$-vertex cubic graph $G$ contains at least
$(a+3)n/24-\beta$ perfect matchings. Assume for the sake of
contradiction that this is not the case, and take $G$ to be a
counterexample with the minimum order.

If $G$ is cyclically $4$-edge-connected, then every edge of $G$ avoids
at least $(a+2)n/24-\beta_E$ perfect matchings by Lemma
E.$(a-1).b$. Hence, $G$ contains at least
$$\tfrac32\cdot \tfrac{a+2}{24}\,n -\tfrac32\,\beta_E\ge
\tfrac{a+3}{24}\,n-\tfrac32\,\beta_E$$ perfect matchings, as desired.

Let $G$ contain a cyclic $3$-edge-cut $E(A,B)$.
Let $e_i^A$ and $e_i^B$ ($i=1,2,3$) be the edges corresponding to the three edges of the cut $E(A,B)$ in $G/A$ and $G/B$, respectively; let $m_i^A$ ($m_i^B$) be the number of perfect matchings of $G/A$ ($G/B$) containing $e_i^A$ ($e_i^B$), $i=1,2,3$. 

If both $G/A$
and $G/B$ are double covered, apply Lemma A.$(a-1)$ to $G/A$ and
$G/B$. Let $n_A=|A|$ and $n_B=|B|$. Then $G/A$ and $G/B$ have
respectively at least
$$\tfrac{a+2}{24}\,(n_B+1)-\beta_A \,\mbox{ and }\, \tfrac{a+2}{24}\,
(n_A+1)-\beta_A$$ perfect matchings. 
Since $G/A$ and $G/B$ are double covered, $m_i^A\ge 2$ and $m_i^B\ge 2$ for $i=1,2,3$.
Hence, the number of perfect matchings of
$G$ is at least
%
$$
\gathered\sum_{i=1}^3m_i^A\cdot m_i^B \ge \sum_{i=1}^3 (2\cdot m_i^A + 2\cdot m_i^B - 4) = 2\cdot \sum_{i=1}^3 m_i^A + 2\cdot \sum_{i=1}^3 m_i^B-12 \ge
\\ \ge 2\cdot \tfrac{a+2}{24}\,n-2\beta_A-12\ge \tfrac{a+3}{24}n -2\beta_A-12
\;\mbox{.}
\endgathered
$$

Otherwise, Lemma~\ref{lm-double} implies that for every cyclic
$3$-edge-cut $E(A,B)$ at least one of the graphs $G/A$ and $G/B$ is a
Klee-graph. If both of them are Klee-graphs, then $G$ is a also a
Klee-graph and the bound follows from Theorem~\ref{thm-Klee} and the
choice of $\beta$.  Hence, exactly one of the graphs $G/A$ and $G/B$
is a Klee-graph.  Assume that there exists a cyclic $3$-edge-cut
$E(A,B)$ such that $G/A$ is a Klee-graph with more than $b$
vertices. Let $n_A=|A|$ and $n_B=|B|$. By the minimality of $G$, $G/B$
has at least $(a+3)(n_A+1)/24-\beta$ perfect matchings.  By the choice
of $b$, $G/A$ has at least $(a+3)(n_B+1)/24+3$ perfect matchings. 
Since $G/A$ and $G/B$ are matching covered, $m_i^A\ge 1$ and $m_i^B\ge 1$, $i=1,2,3$.
 The perfect
matchings of $G/A$ and $G/B$ combine to at least
$$\tfrac{a+3}{24}\,(n_B+1)+3+\tfrac{a+3}{24}\,(n_A+1)-\beta-3\ge
\tfrac{a+3}{24}\,n-\beta$$ perfect matchings of $G$.

We can now assume that for every cyclic 3-edge-cut $E(A,B)$ of $G$,
one of $G/A$ and $G/B$ is a Klee-graph of order at most $b$. In this
case, contract all the Klee sides of the cyclic $3$-edge-cuts.  The
resulting cubic graph $H$ is cyclically $4$-edge-connected and $G$ is
a $b$-expansion of $H$. By Lemma E.$(a-1).b$, $G$ has at least
$(a+2)n/24-\beta_E$ perfect matchings avoiding any edge present in
$H$.  Hence, $G$ contains at least
$$\tfrac32\cdot \tfrac{a+2}{24}\,n -\tfrac32\,\beta_E\ge
\tfrac{a+3}{24}\,n-\beta$$ perfect matchings, as desired.
\end{proof}

\section{Proof of B-series of lemmas}\label{section-B}

If $G$ is a cubic bridgeless graph, $E(A,B)$ a $2$-edge-cut with $A$
inclusion-wise minimal, then $G[A]$ is called a {\em semiblock} of
$G$. Observe that the semiblocks of $G$ are always vertex disjoint. If
$G$ has no $2$-edge-cuts, then it consists of a single semiblock
formed by $G$ itself. For a 2-edge-cut $E(A,B)$ of $G$, let $G_A$ ($G_B$) be
the graph obtained from $G[A]$ ($G[B]$) by adding an edge $f_A$ ($f_B$) between its two
vertices of degree two. Observe that if $G[A]$ is a semiblock, then
$s(G)=s(G_B)+1$, where the function $s$ assigns the number of
semiblocks.

\begin{lemma}
\label{lm-semiblock}
If $G$ is a cubic bridgeless graph, then any edge of $G$ is avoided
by at least $s(G)+1$ perfect matchings.
\end{lemma}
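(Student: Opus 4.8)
The plan is to proceed by induction on $s(G)$, the number of semiblocks of $G$. For the base case $s(G)=1$, the graph $G$ has no $2$-edge-cut, so $G$ is $3$-edge-connected (equivalently cyclically $3$-edge-connected). Fix an edge $e$ of $G$. If $G$ is double covered, then Theorem~\ref{thm-ef} already gives a perfect matching avoiding $e$ and we need two such matchings: here I would instead invoke Lemma~\ref{lm-double}, which tells us that a cyclically $3$-edge-connected cubic graph that is not a Klee-graph is double covered, and a short separate argument for Klee-graphs (a Klee-graph has many perfect matchings by Theorem~\ref{thm-Klee}, and one checks directly that every edge is avoided by at least two of them, or one uses Theorem~\ref{thm-ef} applied to $e$ and to a second edge). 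Since $s(G)+1 = 2$, it suffices to exhibit two perfect matchings avoiding $e$, which the above provides.

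For the inductive step, suppose $s(G)\ge 2$ and pick a semiblock $G[A]$, so $E(A,B)$ is a $2$-edge-cut with $A$ inclusion-wise minimal. Form $G_A$ and $G_B$ as in the definition preceding the lemma: $G_A$ is $G[A]$ with the extra edge $f_A$ joining its two degree-two vertices, and similarly $G_B$ with $f_B$. Both $G_A$ and $G_B$ are cubic and bridgeless, $G_A$ has a single semiblock (so $s(G_A)=1$), and $s(G_B)=s(G)-1$. The key structural fact about a $2$-edge-cut is that every perfect matching of $G$ either uses both edges of the cut $E(A,B)$ or neither; correspondingly, a perfect matching of $G$ avoiding a given edge $e$ restricts to a pair of perfect matchings, one of $G_A$ and one of $G_B$, that agree on whether they use $f_A$ (resp.\ $f_B$), and conversely such a compatible pair glues back to a perfect matching of $G$.

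Now fix an edge $e$ of $G$. By symmetry assume $e$ lies in $G[A]$ (if $e$ is one of the two cut edges, treat it as lying in $G_A$ after the identification, or handle it by a symmetric argument on the $B$ side). By the base case applied to $G_A$, there are at least two perfect matchings of $G_A$ avoiding $e$; by the pigeonhole principle at least one of them, call it $M_A$, is "representative" of the majority behaviour on $f_A$ — more precisely, among perfect matchings of $G_A$ avoiding $e$ there is one containing $f_A$ or one avoiding $f_A$, and we will use whichever is available on the $G_B$ side. The cleaner route: $G_B$ has at least $s(G_B)+1 = s(G)$ perfect matchings avoiding the edge $f_B$ (apply the induction hypothesis to $G_B$ with the chosen edge $f_B$); each of these avoids $f_B$, hence pairs with a perfect matching of $G_A$ that avoids $f_A$ and avoids $e$. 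We must therefore produce \emph{one} perfect matching of $G_A$ avoiding both $e$ and $f_A$, and one avoiding $e$ while containing $f_A$, so that every matching of $G_B$ avoiding $e$-side constraints has a partner. Applying Theorem~\ref{thm-ef} to $G_A$ with the pair $\{e,f_A\}$ yields a perfect matching of $G_A$ avoiding both; pairing it with the $\ge s(G)-1+1 = s(G_B)+1$ matchings of $G_B$ that avoid $f_B$ gives $s(G_B)+1 = s(G)$ matchings of $G$ avoiding $e$. To gain the extra one, use that $G_B$ has at least one perfect matching \emph{containing} $f_B$ (since $G_B$ is matching-covered by Theorem~\ref{thm-ef}), which pairs with any perfect matching of $G_A$ that avoids $e$ and contains $f_A$ — such a matching exists, again because $G_A$ is matching-covered and we can choose it avoiding $e$ using the "two edges" clause of Theorem~\ref{thm-ef} with the pair $\{e, e'\}$ for a suitable $e'$, forcing $f_A$ into the matching by the cut structure when $e'$ is chosen incident to an endpoint of $f_A$. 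Collecting, we obtain at least $s(G_B) + 1 + 1 = s(G)+1$ perfect matchings of $G$ avoiding $e$, completing the induction.

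The main obstacle is the bookkeeping in the inductive step: one has to be careful that the perfect matchings of $G_B$ avoiding $f_B$ (there are $\ge s(G_B)+1$ of them by induction) and the perfect matchings of $G_B$ containing $f_B$ (at least one, by matching-coveredness) are genuinely distinct and that each glues to a distinct matching of $G$ avoiding $e$ — distinctness on the $G_B$ side is immediate since one family uses $f_B$ and the other does not, and the gluing is injective because the restriction of a perfect matching of $G$ to $B$ is determined. The only real subtlety is guaranteeing on the $G_A$ side the simultaneous availability of a matching avoiding $\{e,f_A\}$ and one avoiding $e$ but containing $f_A$; both follow from Theorem~\ref{thm-ef} applied to $G_A$ with appropriately chosen pairs of edges, using the degree-two-to-three completion so that $f_A$'s status is controlled. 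Handling the case where $e$ is itself a cut edge requires a small separate argument, routing $e$ to whichever side makes the counting work, but introduces no new ideas.
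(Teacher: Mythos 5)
Your inductive step has a genuine gap. In the case $e\subseteq G[A]$ you pair the $\ge s(G)$ matchings of $G_B$ avoiding $f_B$ with a single matching of $G_A$ avoiding $\{e,f_A\}$ (this part is fine, using Theorem~\ref{thm-ef}), and then you claim you can always find a further matching of $G_A$ avoiding $e$ but \emph{containing} $f_A$, to pair with a matching of $G_B$ containing $f_B$. That claim is false in general: Lemma~\ref{lm-special} shows precisely that a cyclically $4$-edge-connected cubic graph can fail to have any perfect matching avoiding $e$ and containing $f$ (when the graph minus $\{e,f\}$ is bipartite with the right colouring), and your phrase ``forcing $f_A$ into the matching by the cut structure when $e'$ is chosen incident to an endpoint of $f_A$'' does not supply an actual argument. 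Theorem~\ref{thm-ef} only lets you \emph{exclude} two prescribed edges, not prescribe inclusion of one and exclusion of the other. The paper handles exactly this obstruction: if no such matching of $G_A$ exists, it observes that $G_A$ must then have at least two matchings avoiding both $e$ and $f_A$ (since the base case gives two matchings of $G_A$ avoiding $e$, and by assumption none of them contains $f_A$), and recovers the extra matching that way. Your proof is missing this fallback.

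A second, smaller gap: you dispose of the case where $e$ lies in $G[B]$ or in the cut by appealing to ``a symmetric argument on the $B$ side.'' There is no such symmetry: $G[A]$ is the inclusion-minimal semiblock, so $s(G_A)=1$ while $s(G_B)=s(G)-1$, and the roles of the two sides in the induction are quite different. The paper treats $e\in G[B]\cup E(A,B)$ as a separate case, applying the induction hypothesis to $G_B$ with the edge $e$ (or $f_B$) and then using the base case on $G_A$ to extend one distinguished matching in two ways. Finally, a minor point in your base case: you invoke Lemma~\ref{lm-double} (double covered: every edge lies \emph{in} two matchings), but what is needed is that every edge is \emph{avoided} by two matchings; the correct elementary argument is to take the two other edges at an endpoint of $e$ and use that each is covered by some perfect matching, giving two distinct matchings avoiding $e$.
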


\begin{proof}
The proof proceeds by induction on the number of semiblocks of $G$.
If $s(G)=1$, then the statement is folklore. Assume $s(G)\ge 2$ and
fix an edge $e$ of $G$.  Let $E(A,B)$ be a $2$-edge-cut of $G$ such
that $G[A]$ is a semiblock.
If $e$ is contained in
$E(A,B)$ or in $G[B]$, then there are at least $s(G_B)+1=s(G)$ perfect
matchings avoiding $e$ in $G_B$ (if $e$ is in $E(A,B)$, avoiding the
edge $f_B$).  Choose among these perfect matchings one avoiding both $e$
and $f_B$.  This matching can be extended in two different ways to
$G[A]$ while the other matchings avoiding $e$ extend in at least one
way. Altogether, we have obtained $s(G)+1$ perfect matchings of $G$
avoiding $e$.

Assume that $e$ is inside $G[A]$. $G_B$ contains at least
$s(G_B)+1=s(G)$ perfect matchings avoiding $f_B$.  Each of them can be
combined with a perfect matching of $G_A$ avoiding $e$ and $f_A$ to
obtain a perfect matching of $G$ avoiding $e$. Moreover, a different
perfect matching of $G$ avoiding $e$ can be obtained by combining
a perfect matching of $G_B$ containing $f_B$ and a
perfect matching of $G_A$ avoiding $e$ and containing $f_A$ (if it
exists). If such a perfect matching does not exist, there must be another perfect matching of $G_A$ avoiding both $e$ and $f_A$. Since $s(G)\ge
2$ and $G_A$ has at least two perfect matchings avoiding $e$, we
obtain at least $s(G)+1$ perfect matchings of $G$ avoiding $e$.
\end{proof}

\begin{proof}[Proof of Lemma B.$a$]
Let $\beta_A$ be the constant from Lemma A.$a$ and set
$\beta=(\beta_A+2)^2$. First observe that Lemma A.$a$ implies that if
$G$ is a cubic bridgeless graph with $n$ vertices and $s$ semiblocks,
then $G$ has at least $(a+3)n/24-s(\beta_A+2)$ perfect matchings. This
can be proved by induction: if $s=1$ then $G$ is 3-edge-connected and
the result follows from Lemma A.$a$. Otherwise take a 2-edge-cut
$E(A,B)$ such that $G[A]$ is a semiblock of $G$, with $n_A=|A|$ and
$n_B=|B|$. Fix a pair of canonical perfect matchings of $G_A$, one containing $f_A$ and one avoiding it; fix another pair for $G_B$.
Each perfect matching of $G_A$ and each perfect matching of $G_B$ can be combined with a canonical perfect matching of the other part to a perfect matching of $G$. Since two combinations of the canonical perfect matchings are counted twice, we obtain at least
$$
\tfrac{a+3}{24}\,n_A-\beta_A+\tfrac{a+3}{24}\,n_B-(s-1)(\beta_A+2) - 2 =\tfrac{a+3}{24}\,n-s(\beta_A+2)
$$
perfect matchings of $G$, which
concludes the induction.

Consequently, if the number of semiblocks of $G$ is smaller than
$\beta_A+3$, the assertion of Lemma B.$a$ follows from Lemma A.$a$ by
the choice of $\beta$.

The rest of the proof proceeds by induction on the number of
semiblocks of $G$, under the assumption that $G$ has at least $\beta_A+3$
semiblocks. Let $E(A,B)$ be a $2$-edge-cut such that $G[A]$ is a
semiblock and let $n_A=|A|$ and $n_B=|B|$. By the induction, $G_B$ has
at least $(a+3)n_B/24-\beta$ perfect matchings, and by Lemma A.$a$,
$G_A$ has at least $(a+3)n_A/24-\beta_A$ perfect matchings. 
Let $m^A_f$ and $m^A_\varnothing$ ($m^B_f$ and $m^B_\varnothing$) be the number of perfect matchings in $G_A$ ($G_B$) containing and avoiding the edge $f_A$ ($f_B$). Clearly, $m^A_f$ and $m^B_f$ are non-zero, and $m^A_\varnothing\ge 2$; by Lemma \ref{lm-semiblock}, $m^B_\varnothing\ge s(G_B)+1\ge \beta_A+3$. Then $(m^A_\varnothing-2) \cdot (m^B_\varnothing-\beta_A-3)\ge 0$ and the number of perfect matchings of $G$ is at least
$$
\aligned
m^A_f\cdot m^B_f &+ m^A_\varnothing\cdot m^B_\varnothing \ge 
m^A_f+m^B_f-1 + (\beta_A+3)m^A_\varnothing + 2m^B_\varnothing - 2(\beta_A+3) \ge \\
&\ge m^A_f+m^A_\varnothing + m^B_f+m^B_\varnothing + (\beta_A+2)m^A_\varnothing + m^B_\varnothing - 2\beta_A-7 \\
&\ge \tfrac{a+3}{24}\,n_A-\beta_A+\tfrac{a+3}{24}\,n_B-
\beta+2(\beta_A+2)+(\beta_A+3)-2\beta_A-7  \\
&\ge \tfrac{a+3}{24}\,n-\beta\;\mbox{.}
\endaligned
$$
\end{proof}

\section{Proof of C-series of lemmas}\label{section-C}

Given an edge $e$ in a cyclically 5-edge-connected cubic graph $G$, there are several possible paths that can be split in such a way that perfect matchings of the reduced graph $H$ avoiding an edge correspond to perfect matchings of $G$ avoiding $e$. In the following two lemmas we prove that at least three of four such graphs $H$ are 4-almost cyclically 4-edge-connected.

\begin{lemma}
\label{lm-split-5-same}
Let $G$ be a cyclically $5$-edge-connected cubic graph with at least
$12$ vertices and let $v_1v_2v_3v_4$ be a path in $G$. Let $v'_4$ be
the neighbor of $v_3$ different from $v_2$ and $v_4$. At least one of
the graphs $H$ and $H'$ obtained from $G$ by splitting off the paths
$v_1v_2v_3v_4$ and $v_1v_2v_3v'_4$, respectively, is $4$-almost
cyclically $4$-edge-connected.
\end{lemma}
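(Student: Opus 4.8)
The plan is to analyze what happens when both splittings $v_1v_2v_3v_4$ and $v_1v_2v_3v'_4$ fail to yield a $4$-almost cyclically $4$-edge-connected graph, and derive a contradiction with the cyclic $5$-edge-connectivity of $G$ (or with $|V(G)|\ge 12$). First I would set up notation: let $v'_1$ be the neighbor of $v_2$ other than $v_1,v_3$; note that $H$ is obtained by deleting $v_2,v_3$ and adding edges $v_1v_4$ and $v'_1v'_4$, while $H'$ is obtained by deleting $v_2,v_3$ and adding edges $v_1v'_4$ and $v'_1v_4$ (the two splittings share the deleted vertices and differ only in how the four stubs $v_1,v'_1,v_4,v'_4$ are paired). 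So $H$ and $H'$ are the two "non-trivial" perfect matchings on the same $4$-edge-cut boundary, and they are closely related.

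The key tool is Lemma~\ref{lm-splitoff} applied with $\ell=5$: since $G$ is cyclically $5$-edge-connected with $|V(G)|\ge 12 = 2\cdot 5+2$, any cyclic cut of $H$ of size less than $5$ has size exactly $3$ or $4$, and it avoids both new edges. First I would handle the case where $H$ already has no cyclic cut of size $<5$, in which case $H$ is cyclically $5$-edge-connected, hence certainly $4$-almost cyclically $4$-edge-connected, and we are done. So assume $H$ has a cyclic $\ell_H$-edge-cut with $\ell_H\in\{3,4\}$; by Lemma~\ref{lm-splitoff} this cut $E(A_H,B_H)$ uses neither $v_1v_4$ nor $v'_1v'_4$, so it is an edge-cut of $G$ minus the path vertices, and one checks that $E(A_H,B_H)$ extends to a cyclic cut of $G$ of size $\ell_H+t$ where $t\in\{0,1,2\}$ depending on how the vertices $v_2,v_3$ reattach; since $G$ is cyclically $5$-edge-connected this forces $\ell_H=3$, $t=2$, and in fact pins down the structure: the cut of $H$ of size $3$ must be a very restricted object, essentially isolating a small side. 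The plan is to show that such a $3$-cut of $H$, being "inherited" from $G$ together with the two stub-edges, means $H$ differs from a cyclically $4$-edge-connected graph only by contracting the Klee side(s) of cyclic $3$-cuts — i.e. $H$ is $k$-almost cyclically $4$-edge-connected for small $k$ — OR that the analogous $3$-cut structure cannot simultaneously occur in both $H$ and $H'$.

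The heart of the argument, and the main obstacle, is the simultaneity: I expect that a cyclic $3$-cut of $H$ and a cyclic $3$-cut of $H'$ that both obstruct $4$-almost cyclic $4$-edge-connectivity would, when pulled back to $G$, produce either a cyclic $4$-edge-cut of $G$ (contradiction) or a cyclic $5$-edge-cut of $G$ separating off fewer than $5$ vertices on one side, which is impossible by Observation~\ref{obs:cyc} and the bound $|V(G)|\ge 12$. Concretely, I would argue: if the bad $3$-cut of $H$ separates $A_H\cup\{v_2,v_3\}$ from $B_H$ in $G$ with a $5$-edge-cut, then $|A_H\cup\{v_2,v_3\}|\ge 4$ and $|B_H|\ge 4$; combining this with the corresponding constraint from $H'$ (whose bad $3$-cut pulls back using the other pairing of stubs) forces the two cuts to interact in a way that yields a cut of $G$ of size $\le 4$, contradicting cyclic $5$-edge-connectivity — here I would use submodularity of edge-cuts to combine the two pulled-back $5$-cuts of $G$ and extract a small cut. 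The remaining routine work is to verify in each sub-configuration that the "cyclic" condition (a cycle on each side) is preserved under the pullback, which follows because the small sides have enough vertices by the $|V(G)|\ge 12$ hypothesis and Observation~\ref{obs:cyc}. I would organize the final write-up as: (1) reduce to the case both $H,H'$ have cyclic cuts of size $3$ or $4$; (2) use Lemma~\ref{lm-splitoff} to locate them and pull them back to $G$; (3) derive the contradiction via submodularity, concluding that at least one of $H$, $H'$ is $4$-almost cyclically $4$-edge-connected.
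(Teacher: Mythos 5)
Your high-level plan---split off both paths, pull the resulting cyclic $3$-edge-cuts back to $G$, and exploit cyclic $5$-edge-connectivity and Observation~\ref{obs:cyc}---is the same as the paper's, and the appeal to Lemma~\ref{lm-splitoff} to locate the four stub vertices in the four quadrants is correct. But there are two genuine gaps.

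First, your dichotomy is phrased the wrong way. You write that either $H$ is $k$-almost for small $k$, \emph{or} the two $3$-cut structures cannot coexist. In fact the two cyclic $3$-edge-cuts $E(A,B)$ of $H$ and $E(A',B')$ of $H'$ \emph{can} coexist; the real conclusion of the counting is that one of the four sides $G[A],G[B],G[A'],G[B']$ is a triangle. Concretely, when all four sides have at least four vertices, the argument is not plain submodularity: one has to count edges leaving the quadrants $X\cap Y$, $(X,Y)\in\{A,B\}\times\{A',B'\}$, inside $G$, treating $v_2,v_3$ separately. Each quadrant sends exactly one edge to $\{v_2,v_3\}$ and, since all quadrants have size at least two (the case $|A\cap A'|=1$ is dispatched first), $d(X\cap Y)\ge 4$, so $\sum d(X\cap Y)\ge 16$; but the same sum is at most $2|E(A,B)|+2|E(A',B')|+4=16$, which forces every quadrant to be a pair of adjacent vertices with no diagonal edges and hence a $3$- or $4$-cycle in $G$, contradicting cyclic $5$-edge-connectivity (given $|V(G)|\ge 12$). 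Submodularity of $d$ alone does not give you $d(X\cap Y)\ge 4$ for a quadrant with two or three vertices, because such a small quadrant need not induce a cycle and Observation~\ref{obs:cyc} does not apply; the refinement that tracks the edges to $\{v_2,v_3\}$ is what makes the bound tight.

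Second, even after concluding that one of $H,H'$, say $H$, has only triangle-sided cyclic $3$-edge-cuts, you still have to show this makes $H$ $4$-almost cyclically $4$-edge-connected, and your sketch skips this. The point is that any triangle in $H$ must use one of the two new edges $v_1v_4$ or $v'_1v'_4$ (since $G$, being cyclically $5$-edge-connected with at least $12$ vertices, has no triangle), hence its third vertex is a common neighbour of $\{v_1,v_4\}$ or of $\{v'_1,v'_4\}$ in $G$; because $G$ has no $4$-cycle, each such pair has at most one common neighbour, so $H$ has at most two triangles and contracting them loses at most four vertices. Without this step the lemma is not established.
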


\begin{proof}
Let $v'_1$ be the neighbor of $v_2$ different from $v_1$ and $v_3$.
By Lemma \ref{lm-splitoff}, both $H$ and $H'$ are cyclically 3-edge-connected. Assume that neither $H$ nor $H'$ is cyclically $4$-edge-connected. 
i.e., $H$ contains a cyclic $3$-edge-cut $E(A,B)$ and $H'$ contains a
cyclic $3$-edge-cut $E(A',B')$. 
 By Lemma~\ref{lm-splitoff}, we can
assume by symmetry that $v_1\in A\cap A'$, $v'_1\in B\cap B'$, $v_4\in
A\cap B'$ and $v'_4\in A'\cap B$, see Figure \ref{fig:5ec}.

\begin{figure}[htbp]
\begin{center}
\includegraphics[scale=1]{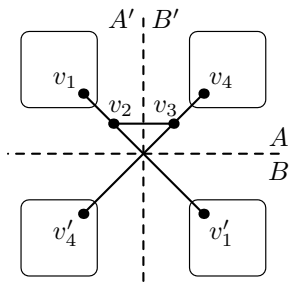}
\caption{After splitting off the paths $v_1v_2v_3v_4$ and $v_1v_2v_3v'_4$ in $G$ we obtain cyclic edge-cuts $E(A,B)$ and $E(A',B')$ in $H$ and $H'$, respectively. \label{fig:5ec}}
\end{center}
\end{figure}

We first show that at least one of $G[A]$, $G[A']$, $G[B]$ and $G[B']$
is a triangle. Assume that this is not the case. Hence, each of $A$,
$A'$, $B$ and $B'$ contains at least four vertices. Let $d(X)$ be the
number of edges leaving a vertex set $X$ in $G$.

Assume first that $|A\cap A'|=1$. Since $|A|\ge 4$ and $|A'|\ge 4$, it
follows that $|A\cap B'|\ge 3$ and $|A'\cap B|\ge 3$.  Since $G$ is
cyclically $5$-edge-connected, then $d(A\cap B')\ge 5$ and $d(A'\cap
B)\ge 5$.  As there is exactly one edge from $A\cap B'$ and one edge
from $A'\cap B$ leading to $\{v_2,v_3\}$, $|E(A,B)|+|E(A',B')|\ge
d(A\cap B')+d(B\cap A')-2\ge 8$ which is a contradiction.

We conclude that $|A\cap A'|\ge 2$ and, by symmetry, $|A\cap B'|\ge
2$, $|A'\cap B|\ge 2$ and $|B\cap B'|\ge 2$.  Since $G$ is cyclically
$5$-edge-connected, we have $d(X\cap Y)\ge 4$ for each $(X,Y) \in
\{A,B\} \times \{A',B'\}$ (with equality if and only if $X\cap Y$
consists of two adjacent vertices). As from each of the four sets
$X\cap Y$, there is a single edge going to $\{v_2,v_3\}$, we obtain
that the sum of $d(X\cap Y)$, $(X,Y) \in \{A,B\} \times \{A',B'\}$, is
at most $2|E(A,B)|+2|E(A',B')| +4=16$. Hence, all four sets $X \cap Y$
consist of two adjacent vertices and there are no edges between $A\cap
A'$ and $B\cap B'$, and between $A\cap B'$ and $A'\cap B$. In this
case $G$ must contain a cycle of length 3 or 4. Since $G$ has at least
8 vertices, it would imply that $G$ contains a cyclic edge-cut of size
at most four, a contradiction.

We have shown that for any cyclic 3-edge-cuts $E(A,B)$ in $H$ and
$E(A',B')$ in $H'$, at least one of the graphs $G[A]$, $G[B]$, $G[A']$
and $G[B']$ is a triangle. This implies that in $H$ or $H'$, say $H$,
all cyclic 3-edge-cuts $E(A,B)$ are such that $G[A]$ or $G[B]$ is
a triangle. The only way a triangle can appear is that there is a common neighbor of one of the
vertices $v_1$ and $v'_1$ and one of the vertices $v_4$ and $v'_4$. Since $G$ is cyclically $5$-edge-connected,
any pair of such vertices have at most one common neighbor (otherwise,
$G$ would contain a 4-cycle). In particular, $H$ has at most two
triangles and it is $4$-almost cyclically $4$-edge-connected.
\end{proof}

\begin{lemma}
\label{lm-split-5-diff}
Let $G$ be a cyclically $5$-edge-connected cubic graph with at least
$12$ vertices and let $v_1v_2v_3v_4$ and $v_1v_2v'_3v'_4$ be paths in
$G$ with $v_3\ne v'_3$.  At least one of the graphs $H$ and $H'$
obtained from $G$ by splitting off the paths $v_1v_2v_3v_4$ and
$v_1v_2v'_3v'_4$, respectively, is $4$-almost cyclically
$4$-edge-connected.
\end{lemma}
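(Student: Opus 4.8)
The plan is to mimic the proof of Lemma~\ref{lm-split-5-same}, adapting the case analysis to the situation where the two split-off paths diverge at $v_2$ rather than at $v_3$. First I would fix notation: let $v'_1$ be the neighbor of $v_2$ different from $v_1$, $v_3$, and $v'_3$ (note that $v_2$ has exactly three neighbors $v_1$, $v_3$, $v'_3$, so the "leftover" edge at $v_2$ after splitting $v_1v_2v_3v_4$ goes to $v'_1$; similarly the leftover edge after splitting $v_1v_2v'_3v'_4$ also goes out of $v_2$ — here I must be careful about exactly which edges become the new edges $v_1v_4$, and so forth, and set up the incidence bookkeeping accordingly). By Lemma~\ref{lm-splitoff}, both $H$ and $H'$ are cyclically $3$-edge-connected, so it suffices to suppose for contradiction that $H$ has a cyclic $3$-edge-cut $E(A,B)$ and $H'$ has a cyclic $3$-edge-cut $E(A',B')$, neither of which contains any of the newly added edges, and derive a contradiction or conclude that after reduction along triangles both stay $4$-almost cyclically $4$-edge-connected.

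The key step is the analogue of the ``at least one of $G[A],G[B],G[A'],G[B']$ is a triangle'' claim. As in Lemma~\ref{lm-split-5-same}, I would first use Lemma~\ref{lm-splitoff} to place the four distinguished vertices into the four intersection cells $A\cap A'$, $A\cap B'$, $B\cap A'$, $B\cap B'$ in a controlled way, then assume each of $A,A',B,B'$ has at least four vertices, apply cyclic $5$-edge-connectivity of $G$ to get $d(X\cap Y)\ge 4$ (with equality only when $X\cap Y$ is a pair of adjacent vertices), count the edges from each cell to the removed vertices $v_2,v_3,v'_3$, and sum the four inequalities. The counting will differ slightly from the $v_3=v'_3$ case because now the removed vertex set on the $G$-side is $\{v_2,v_3,v'_3\}$ (three vertices) rather than $\{v_2,v_3\}$, so the number of edges from the cells into the ``middle'' has to be retallied: each of $v_1,v'_1,v_4,v'_4$ is a single vertex sending one edge toward the middle, and I expect the total surplus to again force all four cells to be adjacent pairs with no edges between opposite cells, which makes $G$ contain a short cycle and hence (since $G$ has at least $12$ vertices) a small cyclic edge-cut, contradicting cyclic $5$-edge-connectivity. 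The sub-case $|A\cap A'|=1$ (or any cell being a single vertex) is handled separately exactly as before.

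Once the triangle claim is established, I would argue as in the last paragraph of Lemma~\ref{lm-split-5-same}: for at least one of $H$, $H'$ — say $H$ — every cyclic $3$-edge-cut $E(A,B)$ has $G[A]$ or $G[B]$ a triangle, and such a triangle can only arise from a common neighbor of one of $\{v_1,v'_1\}$ and one of $\{v_4,v'_4\}$ (or, in the diverging case, possibly a common neighbor of $v_4$ and $v'_4$ themselves if the two split paths share $v_1$ but diverge immediately — I need to enumerate carefully which pairs of endpoints of the added edges can have a common neighbor). Cyclic $5$-edge-connectivity of $G$ bounds the number of such common neighbors by one per pair, giving a bounded number of triangles, at most four, so $H$ is $4$-almost cyclically $4$-edge-connected.

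The main obstacle I expect is the incidence bookkeeping in the summation of the four $d(X\cap Y)$ inequalities: because the two split paths now share the vertex $v_2$ and diverge at $v_3\ne v'_3$, the graph $G$ has a slightly different local structure around the removed vertices than in Lemma~\ref{lm-split-5-same}, and I will need to recount exactly how many edges leave each cell $X\cap Y$ toward $\{v_2,v_3,v'_3\}$ and how the new edges of $H$ and $H'$ relate to the original edges of $G$. Getting the constant in the final sum right (so that equality forces the degenerate structure) is the delicate part; the rest is a faithful adaptation of the previous lemma.
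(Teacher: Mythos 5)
Your plan is essentially the paper's argument: use Lemma~\ref{lm-splitoff} to control where the new edges lie, reduce to the case where all of $A,B,A',B'$ have size at least four, show that the four intersection cells $X\cap Y$ each have $d(X\cap Y)\ge 4$, sum these and compare against the edges into the middle set $\{v_2,v_3,v'_3\}$, and conclude (exactly as in Lemma~\ref{lm-split-5-same}) that triangles come from common neighbors of end-vertices of the new edges, so one of $H,H'$ is $4$-almost cyclically $4$-edge-connected. The arithmetic you anticipate does check out (5 rather than 4 edges leave the cells toward the middle, and the sum is bounded by $2\cdot 3+2\cdot 3+5=17$ against a lower bound of $16$).

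However, there is one genuinely new step hiding in what you call ``place the four distinguished vertices into the cells in a controlled way,'' and you will not get it from Lemma~\ref{lm-splitoff} alone. After placing $v_1,v_4\in A$ and $v'_3,v_5\in B$ (the endpoints of the new edges of $H$), you also need to locate the neighbors $v'_4$ and $v'_5$ of $v'_3$. In the previous lemma the analogous vertices were automatically controlled by the symmetry $v_3=v'_3$; here they are not, and a priori one of $v'_4,v'_5$ could land in $A$. The paper disposes of this with a short but separate argument: if, say, $v'_5\in B$ but $v'_4\in A$, then moving $v'_3$ from $B$ to $A$ and re-inserting $v_2,v_3$ yields a $4$-edge-cut $E(C,D)$ of $G$ with $D$ containing the distinct vertices $v_5,v'_5$, so by cyclic $5$-edge-connectivity $D=\{v_5,v'_5\}$, forcing $G[B]$ to be a triangle — i.e.\ you are already done in that subcase. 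Your sketch implicitly assumes all six relevant vertices fall where they should; making that assumption rigorous requires this extra cut argument. The rest of your bookkeeping concerns (the role of $v'_3$ as the ``$v'_1$'' of $H$, the fact that only three of the four cells end up being adjacent pairs since $17$ is odd, and which endpoint pairs can create triangles) are details you correctly flag as needing care, and they do work out as you expect.
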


\begin{proof}
Let $v_5$ be the neighbor of $v_3$ different from $v_2$ and $v_4$, and
let $v'_5$ be the neighbor of $v'_3$ different from $v_2$ and $v'_4$.
Again, by Lemma \ref{lm-splitoff}, both $H$ and $H'$ are cyclically 3-edge-connected. We assume that neither $H$ nor $H'$ is cyclically
$4$-edge-connected and consider cyclic $3$-edge-cuts $E(A,B)$ of $H$
and $E(A',B')$ of $H'$.  For the sake of contradiction, assume that
each of $A$, $B$, $A'$, and $B'$ has the size at least four. By
Lemma~\ref{lm-splitoff}, we can also assume that $\{v_1,v_4\}\subseteq
A$ and $\{v'_3,v_5\}\subseteq B$. We claim that both $v'_4$ and $v'_5$
also belong to $B$. Clearly, at least one of them does (otherwise, $G$
would contain a cyclic $2$-edge-cut, which is impossible by
Lemma~\ref{lm-splitoff}).  Say, $v'_5$ does and $v'_4$ does not.  Let
$C=A\cup\{v_2,v_3,v'_3\}$ and $D=B\setminus\{v'_3\}$. The set $D$
contains the vertices $v_5$ and $v'_5$, which are distinct since $G$
has no 4-cycle. The edge-cut $E(C,D)$ is a $4$-edge-cut in $G$, and
since $G$ is cyclically $5$-edge-connected, we have $D=\{v_5,v'_5\}$
and thus $G[B]$ is a triangle as desired. A symmetric argument applies
if $v'_4$ is contained in $B$ and $v'_5$ is not.  We conclude that we
can restrict our attention without loss of generality to the following
case: $\{v_1,v_4\}\subseteq A$, $\{v'_3,v_5,v'_4,v'_5\}\subseteq B$,
$\{v_1,v'_4\}\subseteq A'$ and $\{v_3,v'_5,v_4,v_5\}\subseteq B'$, see Figure \ref{fig:5ec2}.

\begin{figure}[htbp]
\begin{center}
\includegraphics[scale=1]{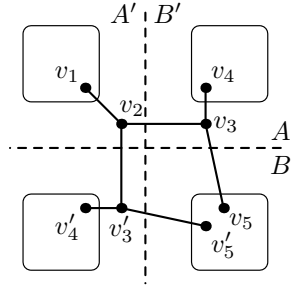}
\caption{After splitting off the paths $v_1v_2v_3v_4$ and $v_1v_2v'_3v'_4$ in $G$ we obtain cyclic edge-cuts $E(A,B)$ and $E(A',B')$ in $H$ and $H'$, respectively. \label{fig:5ec2}}
\end{center}
\end{figure}

As a consequence, $|X\cap Y|\ge 1$ for each $(X,Y) \in \{A,B\} \times
\{A',B'\}$ and the set $B\cap B'$ contains at least two vertices
($v_5$ and $v'_5$).  If $|A\cap A'|=1$, then both $|A\cap B'|$ and
$|B\cap A'|$ are at least three.  Consequently, $d(A\cap B')\ge 5$ and
$d(A'\cap B)\ge 5$ where $d(X)$ is the number of edges leaving $X$ in
$G$.  Since $|E(A,B)|+|E(A',B')|\ge d(A\cap B')+d(A'\cap B)-2\ge 8$,
this case cannot happen. Similarly, we obtain a contradiction if
$|A\cap B'|=1$ by inferring that $|E(A,B)|+|E(A',B')|\ge d(A\cap
A')+d(B\cap B')-3\ge 7$.  Hence, each of the numbers $d(X\cap Y)$,
$(X,Y) \in \{A,B\} \times \{A',B'\}$, is at least four (with equality
if and only if $X \cap Y$ consists of two adjacent vertices) and their
sum is at least $16$. Since exactly five edges leave the sets $X\cap
Y$ to $\{v_2,v_3,v'_3\}$, we obtain that the sum of $d(X\cap Y)$,
$(X,Y) \in \{A,B\} \times \{A',B'\}$, is at most
$2|E(A,B)|+2|E(A',B')| +5=17$. As a consequence, three of the sets $X
\cap Y$ consists of two adjacent vertices and there are no edges
between $A\cap A'$ and $B\cap B'$, and between $A\cap B'$ and $A'\cap
B$. In this case $G$ must contain a cycle of length 3 or 4. Since $G$
has at least 8 vertices, it would imply that it contains a cyclic
edge-cut of size at most four, a contradiction.

We proved that for any cyclic 3-edge-cuts $E(A,B)$ in $H$ and
$E(A',B')$ in $H'$, at least one of the graphs $G[A]$, $G[B]$, $G[A']$
and $G[B']$ is a triangle. The rest of the proof follows the lines of
the proof of Lemma~\ref{lm-split-5-same}.
\end{proof}

We can now prove the lemmas in the C series.

\begin{proof}[Proof of Lemma C.$a.b$]
Let $G$ be a cyclically 5-edge-connected graph, $e=v_1v_2$ be an edge of
$G$, and $H$ be a $b$-expansion of $G$ with $n$ vertices. Our aim is
to prove that for some $\beta$ depending only on $a$ and $b$, $H$ has
at least $(a+3)n/24-\beta$ perfect matchings avoiding $e$. If $a=0$,
consider the graph $H'$ obtained from $H$ by contracting the
Klee-graphs corresponding to $v_1$ and $v_2$ into two single
vertices. This graph is 3-edge-connected and $e$ is not contained in a
cyclic 3-edge-cut. Moreover, $H'$ has at least $n-2b+2$ vertices, so
by Lemma~\ref{lm-3conn}, $H'$ has at least $n/8-(b-1)/4$ perfect
matchings avoiding $e$, and all of them extend to perfect matchings of
$H$ avoiding $e$. The result follows if $\beta \ge (b-1)/4$.

Assume that $a\ge1$, and let $\beta_E$ be the constant from
Lemma~E.$(a-1).b$.  Further, let $v_3$ and $v'_3$ be the neighbors of
$v_2$ different from $v_1$, let $v_4$ and $v_5$ be the neighbors of
$v_3$ different $v_2$, and let $v_4'$ and $v_5'$ be the neighbors of
$v'_3$ different $v_2$.  Consider the graphs $G_1$, $G_2$, $G_3$ and
$G_4$ obtained from $G$ by splitting off the paths $v_1v_2v_3v_4$,
$v_1v_2v_3v_5$, $v_1v_2v'_3v'_4$ and $v_1v_2v'_3v_5'$ and after
possible drop of at most four vertices (replacing two triangles with
vertices) to obtain a cyclically $4$-edge-connected graph. Let $e$ also denote the new edges $v_1v_4$ in $G_1$,
$v_1v_5$ in $G_2$, $v_1v_4'$ in $G_3$ and $v_1v_5'$ in $G_4$. By
Lemmas~\ref{lm-split-5-same} and~\ref{lm-split-5-diff}, at least three
of the graphs $G_i$, say $G_1$, $G_2$, and $G_3$, are cyclically
$4$-edge-connected, and by Lemma~\ref{lm-splitoff} the graph $G_4$ is
3-edge-connected and $e$ is not contained in a cyclic 3-edge-cut of
$G_4$.

\begin{figure}[htbp]
\begin{center}
\includegraphics[scale=0.4]{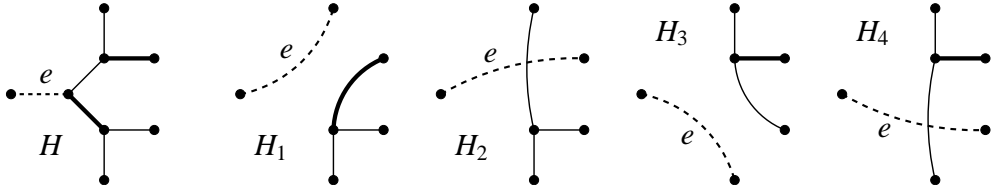}
\caption{A perfect matching of $H$ avoiding $e$ and the corresponding
perfect matchings of $H_1$, $H_3$ and $H_4$ avoiding
$e$. \label{fig:split}}
\end{center}
\end{figure}

For every $1 \le i \le 4$, let $H_i$ be the $b$-expansion of $G_i$
corresponding to $H$ (expand the vertices present both in $G$ and
$G_i$, i.e., all the vertices but at most $8$ vertices removed for
$G_i$, $i=1,2,3$ and 2 vertices removed from $G_4$).  In particular,
$H_1$, $H_2$ and $H_3$ have at least $n-8b$ vertices and $H_4$ has at
least $n-2b$ vertices.  By Lemma E.$(a-1).b$, each of the graphs
$H_1$, $H_2$ and $H_3$ contains at least
$$\tfrac{a+2}{24}(n-8b)-\beta_E$$ perfect matchings avoiding $e$ and
the graph $H_4$ contains at least $(n-2b)/8$ such perfect
matchings by Lemma~\ref{lm-3conn}. Observe that every perfect matching
of $H$ avoiding $e$ corresponds to a perfect matching in
at most three if the graphs $H_1$, $H_2$, $H_3$, and $H_4$ (see Figure~\ref{fig:split} for
an example, where the perfect matchings are represented by thick
edges). We obtain that $H$ contains at least
$$\frac13\cdot\left(3\cdot\left(\tfrac{a+2}{24}\,(n-8b)-\beta_E
\right)+ \tfrac{n-2b}{8}\right)= \tfrac{a+3}{24}\,n-
\beta_E-\tfrac{b}{12}(4a+9)$$ perfect matchings avoiding
$e$. The assertion of the lemma now follows by taking 
$\beta = \max\{ \beta_E+b(4a+9)/12,(b-1)/4\}$.
\end{proof}

\section{Cutting cyclically 4-edge-connected graphs}\label{sec:cutc4c}

Consider a $4$-edge-cut $E(A,B)=\{e_1,\ldots,e_4\}$ of a cubic graph
$G$, and let $v_i$ be the end-vertex of $e_i$ in $A$. Let $
\{i,j,k,\ell\}$ be a permutation of $\{1,2,3,4\}$. The graph
$G^A_{ij}$ is the cubic graph obtained from $G[A]$ by adding two edges
$e_{ij}$ and $e_{k\ell}$ betwen $v_i$ and $v_j$ and between $v_k$ and
$v_{\ell}$. The graph $G^A_{(ij)}$ is the cubic graph obtained from
$G[A]$ by adding one vertex $v_{ij}$ adjacent to $v_i$ and $v_j$, one
vertex $v_{k\ell}$ adjacent to $v_k$ and $v_{\ell}$, and by joining
$v_{ij}$ and $v_{k\ell}$ by an edge denoted by $e^A_{(ij)}$. The edge
between $v_i$ and $v_{ij}$ is denoted by $e^A_i$. We sometimes write
$G_{ij}$, $G_{(ij)}$ and $e_{(ij)}$ instead of $G^A_{ij}$,
$G^A_{(ij)}$ and $e^A_{(ij)}$ when the side of the cut is clear from
the context. The constructions of these two types of graphs are
depicted in Figure~\ref{fig:4cut}.

\begin{figure}[htbp]
\begin{center}
\includegraphics[scale=0.8]{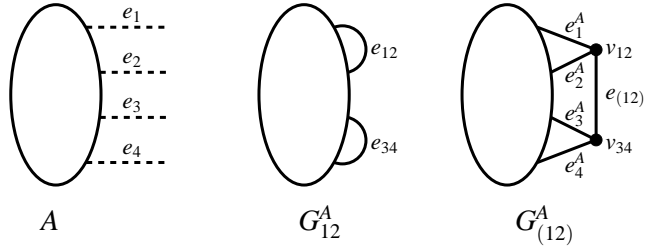}
\caption{The graphs $G^A_{12}$ and $G^A_{(12)}$. \label{fig:4cut}}
\end{center}
\end{figure}

\begin{lemma}
\label{lm-split-4A}
Let $G$ be a cyclically $4$-edge-connected graph and $E(A,B)$ a cyclic
$4$-edge-cut in $G$. All three graphs $G^A_{(12)}$, $G^A_{(13)}$ and
$G^A_{(14)}$ are $3$-edge-connected with any of the edges $e_i^A$ not
being contained in a cyclic $3$-edge-cut. If $G[A]$ is not a cycle of
length of four, then at least two of these graphs are cyclically
$4$-edge-connected.
\end{lemma}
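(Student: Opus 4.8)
The plan is to push everything onto the side $A$. Since $E(A,B)$ is a cyclic $4$-edge-cut of the cyclically $4$-edge-connected graph $G$, the graph $G[B]$ contains a cycle, and (recalling that $G$ is $3$-edge-connected) this gives the two structural facts I will use constantly: for every $P$ with $\emptyset\neq P\subsetneq A$ one has $d_G(P)\geq 3$, and moreover $d_G(P)\geq 4$ as soon as $G[P]$ contains a cycle, because then $E(P,\overline P)$ is a cyclic edge-cut of $G$ (here $\overline P=(A\setminus P)\cup B\supseteq B$). I will also use the graph $\widehat G_A$ obtained from $G^A_{(ij)}$ by contracting $e^A_{(ij)}$; it is $G[A]$ with a single vertex of degree four attached to $v_1,\dots,v_4$, so it does not depend on $i,j$, and lifting any small cyclic cut of $\widehat G_A$ to $G$ (together with $B$, which carries a cycle) shows, directly from the cyclic $4$-edge-connectivity of $G$, that $\widehat G_A$ is cyclically $4$-edge-connected and hence $3$-edge-connected. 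Finally, a counting device used repeatedly: if $\emptyset\neq P\subsetneq A$, $G[P]$ is a forest, and $d_G(P)=d$, then $2\,|E(G[P])|=3|P|-d$, so $|P|\equiv d\pmod 2$ and, by acyclicity, $|P|\leq d-2$; in particular $d=3$ forces $|P|=1$ and $d=4$ forces $|P|\leq 2$.

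For the first assertion, suppose some $G^A_{(ij)}$ has an edge-cut $E(X,Y)$ of size at most two; by Observation~\ref{obs:cyc} it is cyclic, hence a $2$-edge-cut both of whose sides contain a cycle. If $v_{ij}$ and $v_{k\ell}$ lie on the same side, contracting $e^A_{(ij)}$ turns $E(X,Y)$ into a cyclic cut of $\widehat G_A$ of size at most two, impossible. Otherwise $e^A_{(ij)}$ is one of the two cut edges, and the bounds $d_G(X\cap A),d_G(Y\cap A)\geq 3$ force that exactly one further cut edge lies in $G[A]$, that $v_i,v_j$ lie on $v_{ij}$'s side and $v_k,v_\ell$ on $v_{k\ell}$'s side, and that the trace $P$ on $A$ of $v_{ij}$'s side satisfies $d_G(P)=3$ with $\{v_i,v_j\}\subseteq P$; since $v_i\neq v_j$ we get $|P|\geq 2$, so by the counting device $G[P]$ contains a cycle, making $E(P,\overline P)$ a cyclic $3$-edge-cut of $G$, a contradiction. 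For the claim that no $e^A_m$ lies in a cyclic $3$-edge-cut, suppose $E(X,Y)$ were a cyclic $3$-edge-cut of $G^A_{(ij)}$ containing $e^A_i=v_iv_{ij}$ with $v_{ij}\in X$, $v_i\in Y$. Examining the two remaining edges at $v_{ij}$ together with the position of $v_{k\ell}$ yields, in every branch, either that shifting $v_{ij}$ to the opposite side produces a $2$-edge-cut (excluded by what was just proved), or that one side is a single vertex ($v_{ij}$, or $v_i$ after isolating $Y\cap A$ and applying the counting device) and the cut is not cyclic, or that $d_G(Y\cap A)=3$ with $G[Y\cap A]$ containing a cycle, giving a cyclic $3$-edge-cut of $G$; all impossible.

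For the last assertion I show that at most one of the three graphs has a cyclic $3$-edge-cut. Assume for contradiction that two do, say $G^A_{(12)}$ with cut $E(X,Y)$ and $G^A_{(13)}$ with cut $E(X',Y')$. By the previous paragraph neither cut uses an edge $e^A_m$, so $v_{12}$ shares its side with $v_1,v_2$ and $v_{34}$ with $v_3,v_4$; were $v_{12},v_{34}$ on a common side the whole cut would lie in $G[A]$ and its $A$-side would be a cyclic $3$-edge-cut of $G$, so they are separated and the two remaining cut edges form a $2$-edge-cut of $G[A]$ separating $P_{12}:=X\cap A\supseteq\{v_1,v_2\}$ from $Q_{34}:=Y\cap A\supseteq\{v_3,v_4\}$; likewise $E(X',Y')$ gives a $2$-edge-cut of $G[A]$ separating $P_{13}\supseteq\{v_1,v_3\}$ from $Q_{24}\supseteq\{v_2,v_4\}$. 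Put $A_{11}=P_{12}\cap P_{13}$, $A_{12}=P_{12}\cap Q_{24}$, $A_{21}=Q_{34}\cap P_{13}$, $A_{22}=Q_{34}\cap Q_{24}$: these partition $A$, are nonempty (they contain $v_1,v_2,v_3,v_4$ respectively), and each meets $\{v_1,\dots,v_4\}$ in exactly that vertex, so sends exactly one edge of $E(A,B)$ to $B$. Since the two $2$-edge-cuts of $G[A]$ carry $4$ edges in total and the edges between $A_{11}$ and $A_{22}$, and between $A_{12}$ and $A_{21}$, are counted by both of them, the number of $G[A]$-edges running between distinct cells equals $4-e(A_{11},A_{22})-e(A_{12},A_{21})$, whence $\sum_{p,q}d_G(A_{pq})=4+2\bigl(4-e(A_{11},A_{22})-e(A_{12},A_{21})\bigr)=12-2\bigl(e(A_{11},A_{22})+e(A_{12},A_{21})\bigr)$. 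As each term is at least $3$ this forces $e(A_{11},A_{22})=e(A_{12},A_{21})=0$ and $d_G(A_{pq})=3$ for all four cells. Each $G[A_{pq}]$ is then acyclic, since a cycle would make $E(A_{pq},\overline{A_{pq}})$ a cyclic $3$-edge-cut of $G$, so the counting device gives $|A_{pq}|=1$; hence $A=\{v_1,v_2,v_3,v_4\}$, each $v_m$ is joined to $B$ by $e_m$, the vanishing edges give $v_1v_4,v_2v_3\notin E(G)$, and so $G[A]$ is forced to be the $4$-cycle $v_1v_2v_4v_3$ — contradicting the hypothesis. Therefore at least two of $G^A_{(12)},G^A_{(13)},G^A_{(14)}$ are cyclically $4$-edge-connected.

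I expect the third step to be the real obstacle: exactly as in Lemmas~\ref{lm-split-5-same} and~\ref{lm-split-5-diff}, the delicate point is to extract from two distinct cyclic $3$-edge-cuts a partition of $A$ into four cells and then run the edge count of $G$ tightly enough to collapse every possibility to the excluded $C_4$, and the bookkeeping of which edges of $E(A,B)$ and which internal edges fall into each cell must be handled carefully; the two earlier steps are a somewhat tedious but routine case analysis on where $v_{ij}$ and $v_{k\ell}$ sit relative to a hypothetical small cut.
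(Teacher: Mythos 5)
Your proof is correct and follows essentially the same three-step outline as the paper's: first rule out $2$-edge-cuts, then rule out cyclic $3$-edge-cuts through an edge $e^A_m$, then run the fourfold intersection count assuming two of the three graphs have cyclic $3$-edge-cuts and force $G[A]$ to be a $4$-cycle. The main organizational difference is your introduction of the auxiliary graph $\widehat G_A$ (obtained by contracting $e^A_{(ij)}$, independent of the choice of $(ij)$), which cleanly disposes of the case where $v_{ij}$ and $v_{k\ell}$ lie on the same side of a hypothetical small cut — the paper instead handles this by hand each time — together with an explicit parity/acyclicity ``counting device'' that the paper uses only implicitly (e.g.\ in the phrase ``with equality if and only if $X\cap Y$ consists of a single vertex''). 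Both are mild simplifications rather than a genuinely new route; the heart of the argument, the bound $\sum_{p,q} d_G(A_{pq}) = 12 - 2(e(A_{11},A_{22})+e(A_{12},A_{21}))\ge 12$ forcing all four cells to be singletons with no diagonal edges, is the same. Two small points: your step~2 is presented as a sketch (``in every branch, either \dots'') where the paper spells the cases out; and in the final collapse to the $4$-cycle you do not explicitly exclude parallel edges among $v_1,\dots,v_4$ — this follows from $G$ being cyclically $4$-edge-connected (a double edge yields a cyclic $2$-edge-cut) or from connectivity of $G[A]$, and is worth a sentence, though it does not affect correctness.
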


\begin{proof}
Recall that according to Observation~\ref{obs:cyc}, any 2-edge-cut in
a cubic graph is cyclic. Hence, 3-edge-connected and cyclically
3-edge-connected is the same for cubic graphs.

First we prove that all three graphs $G^A_{(12)}$, $G^A_{(13)}$ and
$G^A_{(14)}$ are $3$-edge-connected. Assume $G^A_{(12)}$ has a
(cyclic) 2-edge-cut $E(C,D)$. If both $v_{12}$ and $v_{34}$ are in
$D$, then $E(C,D^\prime\cup B)$ is a 2-edge-cut in $G$ (where
$D^\prime = D\setminus\{v_{12},v_{34}\}$), which is a contradiction
with $G$ being cyclically 4-edge-connected, see Figure \ref{fig:c4e1},
left. Therefore, by symmetry we can assume $v_{12}\in C$ and
$v_{34}\in D$. Let $C^\prime = C\setminus \{v_{12} \}$ and $D^\prime =
D\setminus \{v_{34}\}$, see Figure \ref{fig:c4e1}. Then
$E(C^\prime,D^\prime\cup B)$ is a cyclic 3-edge-cut in $G$ unless
$C^\prime$ contains no cycle, which can happen only if it consists of
a single vertex. Similarly we conclude that $D^\prime$ consists of a single
vertex. But then $A$ has no cycle, which is a contradiction.

\begin{figure}[htbp]
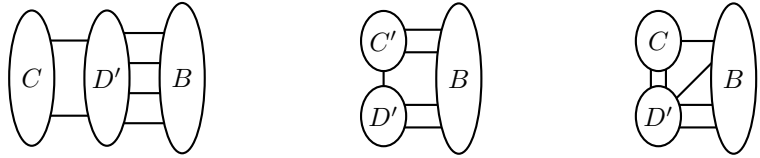

\begin{center}
\includegraphics{c4e.1}
\hfil
\includegraphics{c4e.2}
\hfil
\includegraphics{c4e.3}
\caption{Smaller cyclic edge-cuts of $G$ in the proof of Lemma
\ref{lm-split-4A}. \label{fig:c4e1}}
\end{center}
\end{figure}

Next, we prove that none of the edges of $e_i^A$ is contained in a
cyclic 3-edge cut in $G^A_{(12)}$, $G^A_{(13)}$ or $G^A_{(14)}$. For
the sake of contradiction, assume $G^A_{(12)}$ has a cyclic 3-edge-cut
$E(C,D)$ containing $e_1$. By symmetry, suppose $v_1\in C$ and
$v_{12}\in D$. We claim that $v_2$ and $v_{34}$ belong to $D$: if not,
moving $v_{12}$ from $D$ to $C$ yields a 2-edge-cut in
$G^A_{(12)}$. Let $D^\prime = D \setminus\{v_{12},v_{34}\}$, see
Figure \ref{fig:c4e1}, right. Then $E(C,D^\prime\cup B)$ is a cyclic
3-edge-cut in $G$, a contradiction again.

Finally, assume that $G^A_{(12)}$ and $G^A_{(13)}$ are not cyclically
4-edge-connected. Let $E(C,D)$ and $E(C^\prime,D^\prime)$ be cyclic
3-edge-cuts in $G^A_{(12)}$ and $G^A_{(13)}$, respectively. Just as
above, it is easy to see that $v_{12}$ and $v_{34}$ ($v_{13}$ and
$v_{24}$) do not belong to the same part of the cut $(C,D)$ (the cut
($C^\prime,D^\prime)$ respectively). Let $v_{12}\in C$, $v_{34}\in D$,
$v_{13}\in C^\prime$, $v_{24}\in D^\prime$. Using the same arguments
as in the previous paragraph we conclude that $v_1\in C\cap C^\prime$,
$v_2\in C\cap D^\prime$, $v_3\in D\cap C^\prime$, $v_4\in D\cap
D^\prime$.

For each $(X,Y)\in \{C,D\} \times \{C^\prime,D^\prime\}$ the number of
edges leaving $X\cap Y$ in $G$ is at least 3 (with equality if and
only if $X\cap Y$ consists of a single vertex). As from each of the
four sets $X\cap Y$ there is a single edge going to $B$, the number of
edges among the four sets within $G[A]$ is at least $\frac12(4\cdot 2)
= 4$.  On the other hand, since $E(C,D)$ and $E(C^\prime,D^\prime)$
are 3-edge-cuts and the edges $e_{(12)}$ and $e_{(13)}$ are contained
in the cuts, the number of edges among the four sets within $G[A]$ is
at most 4. Hence, all four sets $X\cup Y$ consist of a single vertex
$v_i$ and there are no edges between $C\cap C^\prime$ and $D\cap
D^\prime$, and between $C\cup D^\prime$ and $C^\prime \cup D$. Since
there can be no parallel edges in $G$, for the other four pairs of $X$
and $Y$ there is precisely one edge between the corresponding vertices
in $X$ and $Y$. It is easy to see that in this case $G[A]$ is a cycle
of length four.
\end{proof}

\begin{lemma}
\label{lm-split-4B}
Let $G$ be a cyclically $4$-edge-connected graph and $E(A,B)$ a cyclic
$4$-edge-cut in $G$.  If $G[A]$ is neither a cycle of length of four
nor the 6-vertex graph depicted in Figure~\ref{fig:exept}, then at
least one of the following holds:
\begin{itemize}
\item all three graphs $G^A_{(12)}$, $G^A_{(13)}$ and $G^A_{(14)}$ are
cyclically $4$-edge-connected,
\item for some $2 \le i \ne j \le 4$, the graphs $G^A_{(1i)}$,
$G^A_{(1j)}$ and $G^A_{1i}$ are cyclically $4$-edge-connected.
\end{itemize}
\end{lemma}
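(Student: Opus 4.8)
The strategy is to analyze, for each pair of distinct indices $i,j\in\{2,3,4\}$, what can go wrong with the three candidate graphs and show that these obstructions cannot all occur simultaneously unless $G[A]$ is one of the two excluded graphs. By Lemma~\ref{lm-split-4A} we already know all three graphs $G^A_{(12)},G^A_{(13)},G^A_{(14)}$ are $3$-edge-connected with no $e^A_i$ in a cyclic $3$-edge-cut, and (since $G[A]$ is not a $4$-cycle) at least two of them are cyclically $4$-edge-connected. So the first bulleted alternative fails only if exactly one of the three $G^A_{(1i)}$ graphs contains a cyclic $3$-edge-cut; say $G^A_{(14)}$ does, while $G^A_{(12)}$ and $G^A_{(13)}$ are cyclically $4$-edge-connected. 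In that situation we must produce, for $\{i,j\}=\{2,3\}$, that $G^A_{1i}$ is also cyclically $4$-edge-connected — i.e. we need $G^A_{12}$ or $G^A_{13}$ (the graphs obtained by adding the two edges $v_1v_i$ and $v_kv_\ell$ rather than subdividing) to be cyclically $4$-edge-connected.

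The key step is to understand the cyclic $3$-edge-cut $E(C,D)$ of $G^A_{(14)}$. As in Lemma~\ref{lm-split-4A}, $v_{14}$ and $v_{23}$ lie on opposite sides, say $v_{14}\in C$, $v_{23}\in D$, and then $v_1,v_4\in C$, $v_2,v_3\in D$ (otherwise one gets a $2$-edge-cut in $G^A_{(14)}$ or a cyclic $3$-edge-cut of $G$ through $B$). Removing $v_{14}$ and $v_{23}$ gives sets $C'=C\setminus\{v_{14}\}$ and $D'=D\setminus\{v_{23}\}$ such that $E(C',D'\cup B)$ is a cyclic $4$-edge-cut of $G$ with $v_1,v_4\in C'$ and $v_2,v_3\in D'$; the three "internal" edges of $E(C,D)$ minus $e_{(14)}$ give exactly two edges of $G[A]$ between $C'$ and $D'$, plus the two edges of the original cut $e_2,e_3$ leaving $D'$ and $e_1,e_4$ leaving $C'$. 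Now I would examine whether $C'$ or $D'$ can be a single vertex (forced if it has no cycle): if $C'=\{v_1\}=\{v_4\}$ this is impossible since $v_1\ne v_4$, so $|C'|\ge 2$ and similarly $|D'|\ge 2$, hence $E(C',D'\cup B)$ is genuinely cyclic. This inner $4$-cut, together with the fact that $G^A_{(12)}$ and $G^A_{(13)}$ are cyclically $4$-edge-connected, is what I would leverage: a cyclic $3$-edge-cut in $G^A_{12}$ (which has no subdivision vertices) would have to separate the two new parallel-ish edges $v_1v_2$ and $v_3v_4$, and chasing where $v_1,v_2,v_3,v_4$ lie, combined with the known cyclic $3$-edge-cut structure of $G^A_{(14)}$, pins down $G[A]$ completely.

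The main obstacle — and where the two exceptional graphs enter — is the endgame case analysis when several of these small cuts coexist. Concretely, if $G^A_{12}$ had a cyclic $3$-edge-cut and so did $G^A_{13}$, one intersects the four sets $X\cap Y$ over the two cuts, counts edges of $G[A]$ (at least $4$ by cyclic $5$-edge-connectivity of intersections being blocked, versus at most $4$ from the two $3$-cuts), and forces every $X\cap Y$ to be tiny; reconstructing $G[A]$ from the resulting adjacency pattern yields either the $4$-cycle (already excluded) or the $6$-vertex graph of Figure~\ref{fig:exept}. I expect the bookkeeping of which of $e_2,e_3$ (respectively $e_1,e_4$) is a cut edge and on which side $v_{14}$ sits to be the fiddly part; symmetry under swapping $\{2,3\}$ and under $A\leftrightarrow$-relabelings cuts the number of genuinely distinct cases to a handful. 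Once the contradiction-or-exception dichotomy is established in every case, the lemma follows: whenever $G[A]$ avoids the $4$-cycle and the Figure~\ref{fig:exept} graph, either all three $G^A_{(1i)}$ survive, or the one failing index $k$ forces the companion $G^A_{1i}$ (for the two good indices $i$) to be cyclically $4$-edge-connected, which is precisely the second alternative.
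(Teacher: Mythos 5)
Your reduction is set up correctly: by Lemma~\ref{lm-split-4A} you may assume $G^A_{(12)}$ and $G^A_{(13)}$ are cyclically $4$-edge-connected while $G^A_{(14)}$ is not, and the claim then reduces to showing that $G^A_{12}$ or $G^A_{13}$ is cyclically $4$-edge-connected. You also correctly note that a cyclic $2$- or $3$-edge-cut $E(C_2,D_2)$ of $G^A_{12}$ must separate $\{v_1,v_2\}$ from $\{v_3,v_4\}$ (else it induces a small cyclic cut in $G$), and likewise for $G^A_{13}$. Up to this point you are on the same path as the paper.

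The concrete counting step is where the argument has a genuine gap. You intersect only the two cuts coming from $G^A_{12}$ and $G^A_{13}$ and assert ``at least $4$ edges versus at most $4$ from the two $3$-cuts, forcing every $X\cap Y$ to be tiny.'' Neither side of that inequality is right: with the WLOG placement of $v_1,\ldots,v_4$, the new edges $e_{12},e_{34},e_{13},e_{24}$ all lie \emph{inside} a side of the respective cut and contribute nothing, so the two cuts have up to $3+3=6$ edges of $G[A]$ (fewer after removing the double-counted ones across both cuts). Meanwhile the degree bound gives $\sum d(X\cap Y)\ge 12$ with edges to $B$ contributing $4$, so the number of internal edges is at least $4$; the resulting inequality $4\le (\text{internal edges})\le 6$ does \emph{not} force the four cells to be singletons. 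For instance, $|E(C_2,D_2)|=|E(C_3,D_3)|=3$ with no edge in both cuts gives $6$ internal edges and a degree sum of $16$, leaving room for one or more cells to have two vertices without violating cyclic $4$-edge-connectivity. So with only these two cuts, $G[A]$ is not pinned down, and you cannot conclude.

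What the paper actually does is take \emph{all three} cuts at once — $E(C_2,D_2)$ in $G^A_{12}$, $E(C_3,D_3)$ in $G^A_{13}$, \emph{and} $E(C_4,D_4)$ in $G^A_{(14)}$ — producing $8$ cells $X_1,\ldots,X_8$, of which $X_1,\ldots,X_4$ (containing $v_1,\ldots,v_4$) are guaranteed non-empty. Counting edges against the three cuts (where $e_{(14)}$ occupies one slot of $E(C_4,D_4)$) gives a tight bound of $8$ on internal edges, and the degree bound gives at least $4+\tfrac32 k$ where $k$ is the number of non-empty extra cells $X_5,\ldots,X_8$. This forces $k\le 2$, and the three cases $k=0,1,2$ give respectively the $4$-cycle (excluded), a contradiction, and the exceptional $6$-vertex graph. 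The crucial added strength of the third cut is exactly what your sketch is missing; without it you cannot ``reconstruct $G[A]$ from the adjacency pattern.'' You mention the cut of $G^A_{(14)}$ qualitatively, but it never enters your cell decomposition or edge count, which is where it is needed.
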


\begin{figure}[htbp]
\begin{center}
\includegraphics[scale=0.6]{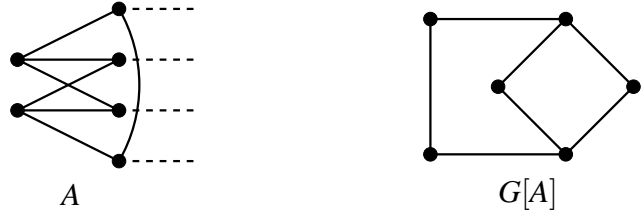}
\caption{The exceptional graph of
Lemma~\ref{lm-split-4B}. \label{fig:exept}}
\end{center}
\end{figure}

\begin{proof}
We assume that $G[A]$ is not a cycle of length four. For the sake of
contradiction, suppose that $G^A_{(12)}$ and $G^A_{(13)}$ are
cyclically 4-edge-connected, but $G^A_{12}$, $G^A_{13}$, and
$G^A_{(14)}$ are not. Let $E(C_2,D_2)$, $E(C_3,D_3)$, and $E(C_4,D_4)$
be cyclic 2- or 3-edge-cuts in $G^A_{12}$, $G^A_{13}$, and
$G^A_{(14)}$, respectively. Note that $G^A_{12}$ and $G^A_{13}$ can
contain 2-edge-cuts.

Consider the vertices $v_1$, $v_2$, $v_3$, and $v_4$. If at least
three of them are in $D_2$, then $E(C_2,D_2\cup B)$ is a cyclic 2- or
3-edge-cut in $G$. If $v_1$ and $v_3$ are in $C_2$ and $v_2$ and $v_4$
in $D_2$, then $E(C_2,D_2\cup B)$ is a cyclic 2- or 3-edge-cut in
$G$ again. Therefore, by symmetry we can assume $v_1,v_2\in C_2$ and
$v_3,v_4\in D_2$. Analogously, we can assume $v_1,v_3\in C_3$ and
$v_2,v_4 \in D_3$.  Using the same arguments as in the proof of Lemma
\ref{lm-split-4A} we conclude that $v_1,v_4,v_{14}\in C_4$ and
$v_2,v_3,v_{23}\in D_4$.  Hence, the sets $X_1=C_2\cap C_3\cap C_4$,
$X_2=C_2\cap D_3\cap D_4$, $X_3=D_2\cap C_3\cap D_4$, and $X_4=D_2\cap
D_3\cap C_4$ are non-empty, since they contain $v_1$, $v_2$, $v_3$,
and $v_4$, respectively.

Let $X_5=D_2\cap D_3\cap D_4$, $X_6=D_2\cap C_3\cap C_4$, $X_7=C_2\cap
D_3\cap C_4$, $X_8=C_2\cap C_3\cap D_4$.  Let $d(X)$ be the number of
edges leaving a vertex set $X$ in $G$. We have $d(X_i)\ge 3$ for each
$i$ such that $X_i$ is non-empty, in particular for $i=1,2,3,4$ (with
equality if and only if $X_i$ consists of a single vertex).  As
from each of the four sets $X_1$, $X_2$, $X_3$, $X_4$ there is a
single edge going to $B$, the number of edges among the eight sets
$X_i$ ($1\le i \le 8)$ within $G[A]$ is at least $\frac12(4\cdot 2+ k
\cdot 3) = 4+\frac32 k$, where $k$ is the number of non-empty sets
$X_i$ for $i=5,6,7,8$.

On the other hand, the number of edges among the eight sets is at most
8, since there are three 3-edge-cuts, and the edge $e_{(14)}$ is in
$E(C_4,D_4)$.  Therefore, $k\le 2$.

If $k=0$, the number of edges among the four sets $X_i$, $i=1,2,3,4$,
is at least 4. On the other hand, each edge is counted in precisely
two cuts, thus the number of edges is exactly 4 and the four sets are
singletons. In this case $G[A]$ is a cycle of length four, a
contradiction.

Assume that $k=1$ and fix $i\in\{1,2,3,4\}$ such that $X_{4+i}$ is
non-empty.  The number of edges among the five non-empty sets is at
least $6>4+\frac32$. On the other hand, each edge from $X_{i}$ (there
are at least 2 such edges) is counted in at least two cuts, thus, the
number of edges is at most $8-2=6$. Therefore, the number of edges is
precisely 6 and four of the five sets are singletons.  Moreover,
precisely two edges are contained in two edge-cuts and four edges are
contained in precisely one edge-cut. The four edges can only join
$X_{i+4}$ to some of the sets $X_1,X_2,X_3,X_4$ except for
$X_i$. Hence, $X_{i+4}$ contains at least two vertices, thus,
$X_1,X_2,X_3,X_4$ are singletons. Since there are no edges between
$X_i$ and $X_{i+4}$, there are at least two edges between $X_{i+4}$
and some $X_j$, $j\ne i$. But then there are at most three edges
leaving $X_{i+4}\cup X_j$ (which contains at least three vertices) in
$G$, a contradiction with $G$ being cyclically 4-edge-connected.


Assume now that $k=2$ and let $X_{i+4}$ and $X_{j+4}$, $1\le i < j \le
4$ be non-empty. The number of edges among the six non-empty sets is
at least $7=4+\frac32\cdot 2$ and at most 8. If the number of edges is
8, each of them is contained in one edge-cut only.  Then the edges
leaving $X_i$ (there are at least two of them) can only end in
$X_{j+4}$, and the edges leaving $X_j$ can only end in $X_{i+4}$.
Since the edges between $X_i$ and $X_{j+4}$ and between $X_j$ and
$X_{i+4}$ belong to the same cut, this cut contains at least four
edges, a contradiction.  Therefore, the number of edges is 7; all the
six sets are singletons and precisely one edge belongs to two
cuts. Since there can be at most one edge between any two sets, the
edge contained in two cuts is the edge from $v_i\in X_i$ to $v_j\in
X_j$. The remaining six edges are the three edges from $v_{i+4}\in
X_{i+4}$ to all $v_k$, $k\in\{1,2,3,4\}\setminus\{i\}$ and the three
edges from $v_{j+4}\in X_{j+4}$ to all $v_k$,
$k\in\{1,2,3,4\}\setminus\{j\}$. The graph $G[A]$ is thus isomorphic
to the exceptional graph depicted in Figure \ref{fig:exept}.
\end{proof}

Cyclic 4-edge-cuts containing a given edge $e$ in a cyclically 4-edge-connected graph turn out to be linearly ordered:

\begin{lemma}
\label{lm-ordered}
Let $G$ be a cyclically $4$-edge-connected graph, and $e$ an edge contained in a cyclic $4$-edge-cut of $G$. There exist $A_1\subseteq A_2 \subseteq \cdots \subseteq A_k$
and $B_i=V(G)\setminus A_i$, $i=1,\ldots,k$, such that every cyclic
$4$-edge-cut of $G$ containing $e$ is of the form $E(A_i,B_i)$.
\end{lemma}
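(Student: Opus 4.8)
The plan is to reduce the statement to the claim that no two cyclic $4$-edge-cuts of $G$ containing $e$ \emph{cross}. Write $e=xy$. First one checks, using that $G$ has no bridges and no $2$-edge-cuts (both immediate from Observation~\ref{obs:cyc}), that for any $4$-edge-cut $E(A,B)$ of $G$ both $G[A]$ and $G[B]$ are connected: if a component $A_1$ of $G[A]$ were proper, then all $d(A_1)$ edges leaving $A_1$ would go to $B$, so $d(A\setminus A_1)=4-d(A_1)$, and $d(A_1)\in\{0,1,2,3,4\}$ would in each case produce a component of $G$, a bridge, or a $2$-edge-cut. Hence a cyclic $4$-edge-cut determines its partition $\{A,B\}$, and we may speak of \emph{the $x$-side} $A$ of the cut. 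Thus it suffices to prove: for any two cyclic $4$-edge-cuts $E(A,B)$ and $E(A',B')$ containing $e$ with $x\in A\cap A'$, one has $A\subseteq A'$ or $A'\subseteq A$. Granting this, the $x$-sides of all cyclic $4$-edge-cuts containing $e$ are pairwise comparable, hence form a chain $A_1\subsetneq\cdots\subsetneq A_k$, and with $B_i:=V(G)\setminus A_i$ every such cut equals some $E(A_i,B_i)$.

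So fix two cyclic $4$-edge-cuts $E(A,B)$ and $E(A',B')$ containing $e$ with $x\in A\cap A'$ (hence $y\in B\cap B'$), and assume for contradiction that they cross, i.e.\ the four sets $Q_1=A\cap A'$, $Q_2=A\cap B'$, $Q_3=B\cap A'$, $Q_4=B\cap B'$ are all nonempty (note $x\in Q_1$, $y\in Q_4$). For $i<j$ let $m_{ij}$ be the number of edges of $G$ with one end in $Q_i$ and the other in $Q_j$, and for a vertex set $S$ let $d(S)$ be the number of edges leaving $S$. Since $e$ joins $Q_1$ to $Q_4$, we have $m_{14}\ge 1$. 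The two cuts give $|E(A,B)|=m_{13}+m_{14}+m_{23}+m_{24}=4$ and $|E(A',B')|=m_{12}+m_{14}+m_{23}+m_{34}=4$, and adding these one obtains $d(Q_2)+d(Q_3)=m_{12}+m_{13}+m_{24}+m_{34}+2m_{23}=8-2m_{14}\le 6$; symmetrically $d(Q_1)+d(Q_4)=8-2m_{23}$. Now Observation~\ref{obs:cyc} (applied with $k=1,2,3$) shows that in a cyclically $4$-edge-connected cubic graph every nonempty proper vertex set $S$ satisfies $d(S)\ge 3$, with equality only if $|S|=1$ or $|V(G)\setminus S|=1$. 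As $\overline{Q_2}$ and $\overline{Q_3}$ each contain the (at least three) vertices of the other three quadrants, $d(Q_2),d(Q_3)\ge 3$, so $d(Q_2)+d(Q_3)\le 6$ forces $d(Q_2)=d(Q_3)=3$, $m_{14}=1$, and $Q_2=\{z_2\}$, $Q_3=\{z_3\}$ to be single vertices.

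It remains to eliminate this configuration, and here a small case analysis is unavoidable. If $m_{23}=1$, then $d(Q_1)+d(Q_4)=6$ forces $Q_1=\{x\}$ and $Q_4=\{y\}$ as well, so $V(G)=\{x,y,z_2,z_3\}$; the degree and cut equalities then force $m_{ij}=1$ for all $i,j$, i.e.\ $G=K_4$, but then $G[A]=G[\{x,z_2\}]$ is a single edge, contradicting that $E(A,B)$ is cyclic. Suppose now $m_{23}=0$. If $Q_1=\{x\}$ (the case $Q_4=\{y\}$ is symmetric via the cut $E(A',B')$), then $d(Q_1)=m_{12}+m_{13}+1=3$ together with the cut and degree equalities forces $m_{12}=1$, so again $G[A]=G[\{x,z_2\}]$ is a single edge, contradicting that $E(A,B)$ is cyclic. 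Finally, if neither $Q_1$ nor $Q_4$ is a single vertex, then $d(Q_1),d(Q_4)\ge 4$ and $d(Q_1)+d(Q_4)=8$ give $d(Q_1)=d(Q_4)=4$, i.e.\ $m_{12}+m_{13}=3$; combined with $m_{12}+m_{24}=3$ (edges at $z_2$) and $m_{13}+m_{24}=3$ (from $|E(A,B)|=4$), this yields $2m_{13}=3$, which is impossible. In every case we reach a contradiction, so the two cuts do not cross, and the lemma follows.

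The main obstacle is the final paragraph: the identities of the second paragraph only pin down $Q_2$ and $Q_3$, and finishing requires splitting into the subcases according to whether $m_{23}=0$ and which of $Q_1,Q_4$ are singletons, in each case extracting a forbidden small edge-cut, a parity contradiction, or the exceptional graph $K_4$ (which possesses no cyclic $4$-edge-cut). The constant, low-level use of Observation~\ref{obs:cyc} --- that a $2$- or $3$-edge-cut in a cyclically $4$-edge-connected cubic graph can only isolate a single vertex --- is exactly what keeps the edge bookkeeping under control.
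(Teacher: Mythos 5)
Your proof is correct, but it takes a genuinely different route from the paper's. The paper first observes that for a cyclic $4$-edge-cut $E(A,B)$ in a cyclically $4$-edge-connected cubic graph, $G[A]$ and $G[B]$ are not merely connected but $2$-edge-connected. With this in hand the non-crossing argument collapses to a single line: if the two cuts crossed, then each of the four ``quadrant-to-adjacent-quadrant'' boundaries (e.g.\ between $A\cap A'$ and $A\cap B'$, which is a cut of the $2$-edge-connected $G[A]$) would contain at least two edges, forcing each of $E(A,B)$ and $E(A',B')$ to contain at least four edges \emph{besides} $e$, hence at least five edges in total. Your argument instead proves only connectedness of the sides (a weaker fact) and then compensates with the submodularity-style identity $d(Q_2)+d(Q_3)=8-2m_{14}$ and the observation that $d(S)\ge 3$ with equality only for singletons, followed by a case analysis to rule out the residual small configurations. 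Both are valid; the paper's route is shorter once the $2$-edge-connectivity observation is granted, while yours is more self-contained and more explicit about which degenerate configurations need to be excluded (e.g.\ $K_4$). The counting step and the ``$d(S)\ge 3$, equality only for singletons'' lemma check out, and the final three subcases ($m_{23}=1$; $m_{23}=0$ with $Q_1$ or $Q_4$ a singleton; $m_{23}=0$ with neither a singleton) are handled correctly.
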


\begin{proof}
Consider two cyclic 4-edge-cuts $E(A,B)$ and $E(A',B')$, such that the
end-vertices of $e$ lie in $A \cap A'$ and $B \cap B'$
respectively. Observe that $A, B,A',B'$ all induce 2-edge-connected
graphs. In order to establish the lemma, it is
enough to show that $A \cap B'=\varnothing$ or $A' \cap B =
\varnothing$. If this is not the case, then for every $X \in \{A \cap
A', B \cap B' \}$ and $Y \in \{A \cap B', B \cap A' \}$ there are at
least two edges between $X$ and $Y$. This implies that $E(A,B)$ and
$E(A',B')$ both contain at least four edges distinct from $e$, a
contradiction.
\end{proof}

\section{Proof of D-series of lemmas}\label{section-D}

The idea to prove the D-series of the lemmas will be to split the
graphs along cyclic 4-edge-cuts, play with the pieces to be sure that
they are cubic with decent connectivity, apply induction on the
pieces, and combine the perfect matchings in the different
parts. However, we will see that combining perfect matchings will be quite
difficult whenever a 2-edge-cut appears in one of the sides of a
cyclic 4-edge-cut. Most of the results in this section
(Lemmas~\ref{lm-twisted-struc} to~\ref{lm-twisted-num-bis}) will allow
us to overcome this difficulty.

\begin{lemma}
\label{lm-triple}
If $G$ is a cyclically $4$-edge-connected cubic bipartite graph with
at least 8 vertices, then every edge is contained in at least 3
perfect matchings of $G$.
\end{lemma}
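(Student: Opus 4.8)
The plan is to reduce to Voorhoeve's theorem (Theorem~\ref{thm-bip}) by exploiting the cyclic $4$-edge-connectivity to gain control over several perfect matchings through any fixed edge. Let $e=uv$ be an edge of $G$, where $G$ is cyclically $4$-edge-connected, cubic, bipartite, with $n\ge 8$ vertices. First I would recall that by Theorem~\ref{thm-bip} the graph $G$ has at least $(4/3)^{n/2}$ perfect matchings avoiding any given edge; since $n\ge 8$ this quantity is at least $(4/3)^4 = 256/81 > 3$, so there are at least four perfect matchings of $G$ avoiding $e$. That already handles the edges \emph{not} through $e$ in a sense, but of course we want matchings \emph{containing} $e$. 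So the real task is to show $e$ lies in at least three perfect matchings.

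The key step is to look at $G-\{u,v\}$. Since $G$ is cubic and $e=uv$, the graph $G'=G\setminus\{u,v\}$ is a $\{2,2,2,2\}$-near cubic bipartite graph: it has four vertices of degree two (the remaining neighbors $u_1,u_2$ of $u$ and $v_1,v_2$ of $v$, noting these four are distinct and moreover $u_1,u_2$ lie in one color class and $v_1,v_2$ in the other because $G$ is bipartite with $u,v$ in opposite classes). The perfect matchings of $G$ containing $e$ are exactly the perfect matchings of $G'$. I would argue that $G'$ still has reasonable connectivity: a cut of size $\le 1$ in $G'$ together with at most the four edges from $\{u,v\}$ to $G'$ would yield a small cyclic edge-cut of $G$ (here one uses $n\ge 8$, so that both sides genuinely contain cycles, via Observation~\ref{obs:cyc}), contradicting cyclic $4$-edge-connectivity; a more careful accounting shows $G'$ is in fact $2$-edge-connected, hence bridgeless. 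Then I would suppress the four degree-two vertices --- i.e.\ replace each degree-two vertex by an edge joining its two neighbors, or equivalently pick the two natural ways of pairing up $u_1,u_2$ with the edges at $u$ --- to obtain an honest cubic bipartite bridgeless graph $G''$ on $n-2$ vertices (checking bipartiteness is preserved), in which a distinguished edge $f$ encodes the constraint. Perfect matchings of $G''$ avoiding $f$ then correspond to perfect matchings of $G'$, hence to perfect matchings of $G$ through $e$; by Theorem~\ref{thm-bip} applied to $G''$ there are at least $(4/3)^{(n-2)/2}\ge (4/3)^3 = 64/27 > 2$ of them, i.e.\ at least three.

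The main obstacle I anticipate is the bookkeeping in passing from $G'$ to a genuinely cubic bridgeless bipartite graph: one must check that suppressing the four degree-two vertices (possibly creating a multi-edge, which is allowed) does not destroy bipartiteness and does not create a bridge, and that the correspondence between matchings is faithful in the right direction. The connectivity verification that $G'$ is bridgeless --- really a short counting argument using Observation~\ref{obs:cyc} and the hypothesis $n\ge 8$ to guarantee cyclicity of the relevant cuts --- is where the cyclic $4$-edge-connectivity hypothesis is genuinely used; if $G$ had a small cyclic edge-cut one could place it so that removing $u$ and $v$ disconnects $G$ or isolates a bridge, and the bound could fail. A cleaner alternative, which I would fall back on if the suppression argument gets awkward, is to apply Voorhoeve's theorem directly to $G$ to get many matchings avoiding each of the three edges at $u$ other than $e$: by inclusion-exclusion-type counting on the three edges incident to $u$, each perfect matching of $G$ uses exactly one of them, and if very few used $e$ then very many would avoid $e$ while using one of the other two --- but a quantitative version of this needs an upper bound too, so the reduction through $G''$ is the safer route.
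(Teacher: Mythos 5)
You correctly identify that perfect matchings of $G$ through $e=uv$ correspond to perfect matchings of $G'=G\setminus\{u,v\}$, and your observation that $G'$ has exactly four vertices of degree two (namely $u_1,u_2\in V$ and $v_1,v_2\in U$, where $u\in U$, $v\in V$) is right, as is the point that cyclic $4$-edge-connectivity is needed to rule out smaller degrees. But the proposed reduction to Theorem~\ref{thm-bip} does not go through, and the problem is not bookkeeping.

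First, the construction of $G''$ is internally inconsistent. ``Replace each degree-two vertex by an edge joining its two neighbors'' (suppression) deletes those four vertices and yields a graph on $n-6$ vertices, not $n-2$; moreover each degree-two vertex of $G'$ has both its remaining neighbours on the \emph{opposite} colour class, so each suppression edge joins two vertices of the same colour class and bipartiteness is unavoidably destroyed. Theorem~\ref{thm-bip} then no longer applies. The alternative reading you offer (``pick the two natural ways of pairing up'', i.e.\ add edges $u_1v_1$ and $u_2v_2$) does produce a cubic bipartite graph on $n-2$ vertices, but this is a genuinely different operation; the two are not ``equivalent''.

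Second, and this is the fatal gap, the matching correspondence is false even in the favourable reading. Let $G''=G'+\{u_1v_1,u_2v_2\}$ and $f=u_1v_1$. A perfect matching of $G''$ avoiding $f$ may perfectly well use the other added edge $u_2v_2$, and such a matching does not restrict to a perfect matching of $G'$. What you would actually need to count are the perfect matchings of $G''$ avoiding \emph{both} added edges, and Theorem~\ref{thm-bip} as stated gives no lower bound for matchings avoiding two prescribed edges (Theorem~\ref{thm-ef} only gives existence of one such matching). Your fallback — inclusion--exclusion over the three edges at $u$ — fails for the reason you yourself flag: it requires an upper bound on the number of matchings through a given edge, which is unavailable.

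The paper's proof is elementary and bypasses Voorhoeve entirely: take any perfect matching $M$ of $H=G\setminus\{u,v\}$ (one exists because $G$ is matching-covered), and, using that $H$ has minimum degree two and is bipartite, exhibit two distinct $M$-alternating cycles by walking alternately from an edge of $M$ until a vertex repeats (bipartiteness forces the closed part of the walk to be alternating; a second walk from another edge, which exists since $|V(H)|\ge 6$, gives a second alternating cycle). Flipping along the two cycles produces two more perfect matchings of $H$, hence three perfect matchings of $G$ through $e$. That direct alternating-cycle argument is the idea your proposal is missing.
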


\begin{proof}
Let $e=uv$ be an edge of $G$. Observe that the graph $H=G\setminus
\{u,v\}$ has minimum degree two, since otherwise $G$ would contain a
cyclic edge-cut of size two or three. Since $G$ is cubic and
bridgeless, it has a perfect matching containing $e$, and $H$ has a
perfect matching $M$. Our aim is to find two different (but not
necessarily disjoint) alternating cycles in $H$ with respect to
$M$. This will prove that $H$ has at least three perfect matchings,
which will imply that $G$ has at least three perfect matchings
containing $e$.

Let $f$ be any edge contained in $M$. Start marching from $f$ in any
direction, alternating the edges in $M$ and the edges not in $M$ until
you hit the path you marched on. Since $G$ is bipartite, this yields
an alternating cycle. If the cycle does not contain $f$, start
marching from $f$ in the other direction and obtain a different
alternating cycle. If the cycle contains $f$, consider an edge not
contained in the alternating cycle (such an edge exists since $H$ has
at least six vertices) and start marching on it until you hit a vertex
visited before; this yields another alternating cycle.
\end{proof}

Let $G$ be a cyclically $4$-edge-connected graph.
If $E(A,B)$ is a cyclic $4$-edge-cut,
we say that $B$ is {\em solid} if $G[B]$ does not have a $2$-edge-cut
with at least two vertices on each of its sides,
in particular, $G[B]$ must have at least eight vertices.

For a graph containing only vertices of degree two and three, the
vertices of degree two are called \emph{corners}. If a graph is
comprised of a single edge, its two end-vertices are also called
corners. We call {\em twisted net} a graph being either a 4-cycle, or
the graph inductively obtained from a twisted net $G$ and a twisted
net (or a single edge) $H$ by adding edges $uv$ and $u'v'$ to the
disjoint union of $G$ and $H$, where $u,u'$ and $v,v'$ are corners of
$G$ and $H$, respectively.  If $H$ is a single edge, this operation is
called an \emph{incrementation}; it is the same as adding a path of
length three between two corners of $G$. If $H$ is a twisted net, the
operation is called a \emph{multiplication}. Observe that every
twisted net has exactly four corners, and that the special graph on
six vertices depicted in Figure~\ref{fig:exept} is a twisted net. The
following lemma will be useful in the proof of lemmas in Series D:

\begin{lemma}
\label{lm-twisted-struc}
Let $G$ be a cyclically $4$-edge-connected graph with a distinguished
edge $e$ that is not contained in any cyclic $4$-edge-cut. If for
every cyclic $4$-edge-cut $E(A,B)$ with $e \in G[A]$, $B$ is not
solid, then for each such cut $G[B]$ is a twisted net.
\end{lemma}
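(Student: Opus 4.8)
The plan is to prove this by induction on $|B|$, using the hypothesis that no cyclic $4$-edge-cut separating off a side containing $e$ is solid. Fix a cyclic $4$-edge-cut $E(A,B)$ with $e\in G[A]$; by hypothesis $B$ is not solid, so $G[B]$ has a $2$-edge-cut with at least two vertices on each side. We want to conclude $G[B]$ is a twisted net. For the base of the induction, note that a smallest such $B$ must induce a graph on exactly four vertices that, being cubic-minus-four-edges with a genuine $2$-edge-cut on each side, is forced to be a $4$-cycle; this is the base twisted net. (One should check $|B|$ cannot be smaller: cyclic $4$-edge-cuts in a cyclically $4$-edge-connected graph have $|B|\ge 4$ by Observation~\ref{obs:cyc}.)

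For the inductive step, let $E(C,D)$ be a $2$-edge-cut of $G[B]$ with $|C|,|D|\ge 2$. The four edges of $E(A,B)$ attach to $G[B]$ at its four corners (the degree-two vertices of $G[B]$), and these four corners are distributed among $C$ and $D$. First I would argue that they cannot all lie on one side: if all four corners were in $C$, then $E(D, V(G)\setminus D)$ would be a $2$-edge-cut of the cubic graph $G$ with $D$ large, contradicting cyclic $4$-edge-connectivity (via Observation~\ref{obs:cyc}). So the corners split either $3$-$1$ or $2$-$2$ between $C$ and $D$. In the $3$-$1$ case — say $D$ contains exactly one corner $w$ — the three edges of $E(A,B)$ meeting $C$ together with the two edges of $E(C,D)$ form a cut in $G$ separating $C$ from the rest; counting degrees, $D\cup\{$two external ends$\}$ meets $V(G)$ in a way that forces $D$ to be a single path of length three between two corners of the "$C$ part", i.e. $G[B]$ arises from the twisted net $G[C\cup\text{attachments}]$ by an incrementation — here I'd apply the induction hypothesis to the cyclic $4$-edge-cut of $G$ whose $B$-side is $C$ together with its newly created corner structure, checking that this cut still has $e$ on the other side and is still non-solid (non-solidity is automatic since any $2$-edge-cut of $G[B]$ that stays inside $C$ persists; if $C$ were solid we'd need a separate short argument, but in fact a smaller non-solid piece can always be found). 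In the $2$-$2$ case, the two edges of $E(C,D)$ plus the (two-plus-two) edges of $E(A,B)$ give $4$-edge-cuts of $G$ isolating $C$ and isolating $D$; these are cyclic (again by Observation~\ref{obs:cyc}, since $|C|,|D|\ge 2$ forces cycles on both sides once we glue appropriately), so each of $G[C\cup\text{attach}]$ and $G[D\cup\text{attach}]$ is a cyclic $4$-edge-connected graph with a distinguished non-solid side containing $e$ on the complement — the induction hypothesis makes both twisted nets, and $G[B]$ is their multiplication.

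The main obstacle I anticipate is the bookkeeping needed to verify, in each case, that the smaller cut one wants to feed into the induction hypothesis genuinely satisfies all three conditions: it is a cyclic $4$-edge-cut of the \emph{same ambient graph} $G$ (so one must check the newly-formed small cubic piece sits inside $G$ as $G[B']$ for an actual cyclic $4$-edge-cut $E(A',B')$ — this uses Lemma~\ref{lm-split-4A}-style splitting arguments to know the auxiliary cut is cyclic and has the right size), that $e$ lies in $G[A']$ (clear, since $e\in G[A]\subseteq A'$), and crucially that $B'$ is \emph{not solid} so the hypothesis applies. The last point is the delicate one: non-solidity of the big side does not immediately hand us non-solidity of a sub-side, so I would handle it by choosing, among all $2$-edge-cuts of $G[B]$, one that is "innermost" on the appropriate side, guaranteeing the induction applies; alternatively one proves a slightly stronger statement by induction (every cyclic $4$-edge-cut side that is not solid is a twisted net \emph{and} all its corner-to-corner structure is inherited), which makes the inductive hypothesis strong enough to push through. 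The rest — identifying incrementation vs. multiplication from the $3$-$1$ vs. $2$-$2$ split, and matching the corner count (every twisted net has exactly four corners, which is exactly what the four edges of $E(A,B)$ demand) — is routine once the connectivity of the pieces is pinned down.
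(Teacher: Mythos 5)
Your induction-on-$|B|$ framework matches the paper's, and you correctly observe both that the ambient graph stays fixed throughout the induction and that $e\in G[A]\subseteq G[V(G)\setminus C]$ for the sub-cuts. However, the case analysis inside the inductive step has two concrete errors that hide the real structure.

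The distribution of the four corners between $C$ and $D$ is forced to be $2$--$2$, so the $3$--$1$ case to which you devote the incrementation cannot occur. If $D$ had exactly one corner, only $1+2=3$ edges of $G$ would leave $D$, while counting degrees inside $D$ gives $|E(G[D])|=(3|D|-3)/2>|D|-1$ for $|D|\ge 2$, so $G[D]$ would contain a cycle and $E(D,V(G)\setminus D)$ would be a cyclic $3$-edge-cut of $G$ --- a contradiction. In particular, your claim that the count ``forces $D$ to be a single path of length three'' is not what it gives; it gives a contradiction, and the incrementation is therefore misplaced.

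With the split forced to be $2$--$2$, both $E(C,V(G)\setminus C)$ and $E(D,V(G)\setminus D)$ are $4$-edge-cuts of $G$, but your invocation of Observation~\ref{obs:cyc} to make both of them cyclic is not valid: the observation needs $|D|\ge k-1=3$, not $|D|\ge 2$. Indeed, when $G[D]$ is acyclic the count $|E(G[D])|=(3|D|-4)/2\le|D|-1$ forces $|D|=2$, so $D$ is a single edge and the cut isolating $D$ is \emph{not} cyclic. This is precisely the incrementation case: $C$ is the side of a cyclic $4$-edge-cut with $e$ in the complement, hence a twisted net by induction, and $G[B]$ arises from $G[C]$ by attaching a path of length three through $D$. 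When both $G[C]$ and $G[D]$ contain a cycle you get the multiplication, as you describe. So the correct dichotomy is governed by whether each of $C$ and $D$ contains a cycle (the paper's case split), not by corner counts, and one branch of it is missing from your argument.

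Finally, the solidity worry you flag is not a real obstacle and needs no ``innermost cut'' device or strengthened induction hypothesis. The lemma's hypothesis is universally quantified over \emph{all} cyclic $4$-edge-cuts of $G$ with $e$ on the other side; the induction never changes $G$ or $e$, so for the new cut $E(V(G)\setminus C,C)$ (which is cyclic and has $e\in G[V(G)\setminus C]$) non-solidity of $C$ is handed to you directly by the hypothesis. This is exactly why the paper's proof is only a few lines.
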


\begin{proof}
Proceed by induction on the number of vertices in $G[B]$.
If the number of vertices of $B$ is at most six, the claim clearly holds.
If $B$ has more than six vertices, it can be split into parts
as $B$ is not solid. If they both contain a cycle, the claim
follows by induction. Otherwise, one of them contains a cycle and
the other is just an edge; the claim again follows by induction.
\end{proof}

Our aim is now to prove that twisted nets have an exponential number
of perfect matchings (Lemma~\ref{lm-twisted-num}), and an exponential
number of matchings covering all the vertices except two corners
(Lemma~\ref{lm-twisted-num-bis}). In order to prove this second result
we need to consider the special case of bipartite twisted nets, and
prove stronger results about them (Lemma~\ref{lm-twisted-num-bip}).

\begin{lemma}
\label{lm-twisted-num}
If $G$ is a twisted net with $n$ vertices, then $G$ has at least
$2^{n/18+2/3}$ perfect matchings.
\end{lemma}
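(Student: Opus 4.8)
The plan is to induct on the number of vertices $n$, using the recursive structure of twisted nets. A twisted net is either a $4$-cycle (for which $n=4$ and the bound $2^{4/18+2/3}=2^{8/9}<2$ is satisfied, as a $4$-cycle has $2$ perfect matchings), or is obtained from a twisted net $G_1$ and either a single edge (incrementation) or a twisted net $G_2$ (multiplication) by adding two edges $uv$ and $u'v'$ joining corners. The key invariant I would track alongside the count of perfect matchings is a count of \emph{near-perfect matchings missing a prescribed pair of corners}; since corners have degree two, the behaviour of matchings around them is very constrained, and this is what makes the recursion close. So I would actually prove by simultaneous induction a statement of the form: a twisted net with $n$ vertices has at least $2^{n/18+2/3}$ perfect matchings, and for any two of its four corners there are at least (roughly) the same number of matchings covering every vertex except those two corners. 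The exponent $1/18$ should come from the worst case, the incrementation step, which adds $2$ vertices; indeed $2^{2/18}=2^{1/9}$ is the multiplicative factor we must gain per incrementation, and this is less than $2^{1/8}$, consistent with the fact that the truly slow cases are handled by incrementations.

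\textbf{The incrementation step.} Suppose $G$ is obtained from a twisted net $G_1$ on $n-2$ vertices by adding a path $uwu'$ of length two through a new internal vertex — wait, more precisely adding a path of length three between two corners $u,u'$ of $G_1$, introducing two new vertices $x,y$ with $u$-$x$-$y$-$u'$. Here $x$ and $y$ become corners of $G$ (each of degree two) together with the two old corners of $G_1$ not named $u,u'$. A perfect matching of $G$ either takes the edge $xy$ (forcing $u,u'$ to be matched inside $G_1$, i.e.\ it restricts to a perfect matching of $G_1$), or takes $ux$ and $yu'$ (forcing a matching of $G_1$ covering everything except the corners $u,u'$). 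So the number of perfect matchings of $G$ is (number of perfect matchings of $G_1$) plus (number of near-perfect matchings of $G_1$ missing the corner pair $\{u,u'\}$). By the induction hypothesis each term is at least $2^{(n-2)/18+2/3}$, so the sum is at least $2\cdot 2^{(n-2)/18+2/3}=2^{1-1/9}\cdot 2^{n/18+2/3}=2^{8/9}\cdot 2^{n/18+2/3}\ge 2^{n/18+2/3}$, which is even stronger than needed. The near-perfect matchings of $G$ missing a prescribed corner pair are handled similarly by case analysis on which edges at $x,y$ are used, again reducing to perfect or near-perfect matchings of $G_1$; this is where I expect some fiddly bookkeeping, because the prescribed pair of corners of $G$ may include $x$, or $y$, or two old corners, giving several subcases.

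\textbf{The multiplication step.} If $G$ is obtained from twisted nets $G_1$ (on $n_1$ vertices) and $G_2$ (on $n_2$ vertices, $n_1+n_2=n$) by joining corners $u\in G_1,v\in G_2$ via $uv$ and $u'\in G_1,v'\in G_2$ via $u'v'$, then a perfect matching of $G$ is determined by how many of the two new edges it uses: either both $uv,u'v'$ (giving near-perfect matchings of $G_1$ missing $\{u,u'\}$ and of $G_2$ missing $\{v,v'\}$), or neither (perfect matchings of both $G_i$), or exactly one (which forces a parity contradiction on $G_1$ unless the matching covers all of one $G_i$ except one vertex — actually using exactly one crossing edge is impossible since each $G_i$ has an even number of vertices and would be left with an odd uncovered set). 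So the count is at least (p.m.\ of $G_1$)$\times$(p.m.\ of $G_2$), which by induction is at least $2^{n_1/18+2/3}\cdot 2^{n_2/18+2/3}=2^{n/18+4/3}\ge 2^{n/18+2/3}$ — here we have a full extra factor of $2^{2/3}$ to spare, which is what lets the incrementation step, with its tighter margin, go through. The near-perfect count for $G$ in the multiplication case again splits by crossing edges and reduces to products of (near-)perfect counts of the $G_i$.

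\textbf{Main obstacle.} The genuine content is setting up the \emph{right} strengthened induction hypothesis about near-perfect matchings missing a corner pair and then verifying it survives both operations for \emph{every} choice of the prescribed corner pair; the perfect-matching count itself falls out almost for free once that is in place. I expect the bookkeeping in the incrementation step — keeping track of which of the four corners of $G$ are ``new'' ($x,y$) versus ``old'', and reducing each of the $\binom{4}{2}=6$ corner-pair choices to statements about $G_1$ — to be the part requiring the most care, though each individual subcase is routine. One should also double-check the base case $n=4$ and the degenerate reading of ``single edge'' in the definition (its two endpoints count as corners) so that the first incrementation is handled uniformly.
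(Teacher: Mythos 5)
Your overall strategy (induct on $n$, split on incrementation vs.\ multiplication, strengthen the induction by tracking near-perfect matchings that miss a prescribed pair of corners) is the right framework and is in fact what the paper does with its quantities $m^{H_i}_X$. The multiplication case and the base cases are handled correctly and essentially as in the paper. But the incrementation step — which, as you rightly identify, is where the real content lies — contains a genuine gap.

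Your plan requires the strengthened hypothesis that for \emph{any} pair of corners $\{u,u'\}$ of a twisted net $G_1$ on $n-2$ vertices, the number of near-perfect matchings of $G_1$ missing exactly $\{u,u'\}$ is at least $2^{(n-2)/18+2/3}$; you then add this to the perfect-matching count to get a factor of $2$ per incrementation. That strengthened hypothesis is false, even approximately. The $4$-cycle already refutes it: removing two opposite corners leaves two isolated vertices, so the near-perfect count for that pair is $0$, not $\ge 2^{4/18+2/3}$. More generally, if $G_1$ is a \emph{bipartite} twisted net and $u,u'$ lie in the same colour class, then $G_1\setminus\{u,u'\}$ has an unbalanced bipartition and has no perfect matching at all. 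Consequently a single incrementation can add \emph{zero} new perfect matchings: e.g.\ attaching a path across the two opposite corners of $C_4$ yields the $6$-vertex graph of Figure~\ref{fig:exept}, which still has exactly $2$ perfect matchings. So there is no per-step multiplicative gain, and your calculation ``$2\cdot 2^{(n-2)/18+2/3}$'' is unjustified precisely in the case that is supposed to be tight.

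The fix, which is how the paper argues, is to \emph{amortize} the gain over a window of seven consecutive incrementations rather than insist on a gain at every step. One tracks, for the current twisted net $H_i$ with corners $v_i^1,\dots,v_i^4$, all quantities $m^{H_i}_{\{j,k\}}$, observing that an incrementation along a path between two corners sets the near-perfect count for the two \emph{new} corners equal to the \emph{old} perfect-matching count, leaves the other near-perfect counts non-decreasing, and increases the perfect-matching count by exactly $m^{H_i}_{\{a,b\}}$ for the pair $\{a,b\}$ that was used. Since there are only $\binom{4}{2}=6$ corner-pairs, after at most seven incrementations one is forced to use a pair whose near-perfect count is already $\ge m^{H_0}_{\varnothing}$, which doubles the perfect-matching count: $m^{H_7}_{\varnothing}\ge 2\,m^{H_0}_{\varnothing}$. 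Seven incrementations add $14$ vertices, and $2\cdot 2^{(n-14)/18+2/3}\ge 2^{n/18+2/3}$, which is why the exponent $1/18$ works. Without some such amortization argument, your proof does not close.
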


\begin{proof}
We proceed by induction on $n$. First assume that $G$ was obtained
from a single 4-cycle by a sequence of $k \le 6$ incrementations. If
$k \le 1$, then $n\le 6$ and $G$ has at least $2 \ge 2^{n/18+2/3}$
perfect matchings. If $2 \le k \le 6$ it can be checked that $G$ has
at least 3 perfect matchings. Since $n\le 16$, we have $3 \ge
2^{n/18+2/3}$ and the claim holds.  Assume now that there exist two
twisted nets $H_1$ and $H_2$ on $n_1$ and $n_2$ vertices respectively,
so that $G$ was obtained from at most six incrementations of the
multiplication of $H_1$ and $H_2$. In this case $n_1+n_2 \ge n -12$
and by the induction, $G$ has at least $2^{n_1/18+2/3} \cdot
2^{n_2/18+2/3} \ge 2^{n/18+2/3} $ perfect matchings.

So we can now assume that $G$ was obtained from a twisted net $H_0$ by
a sequence of seven incrementations, say $H_1, \ldots , H_7=G$. For a
twisted net $H_i$ with corners $v_i^1,\ldots,v_i^4$, and for any
$X\subseteq \{1,\ldots, 4\}$ define the quantities $m^{H_i}_X$ as the
number of perfect matchings of $H_i \setminus \{v_i^j, j \in
X\}$. Assume that $H_1$ is obtained (without loss of generality) by
adding the path $v_0^1v_1^2v_1^1v_0^2$ to $H_0$, and set $v_1^3=v_0^3$
and $v_1^4=v_0^4$. We observe that
$m^{H_1}_{\varnothing}=m^{H_0}_{\varnothing}+m^{H_0}_{12}$ and
$m^{H_1}_{12}=m^{H_0}_{\varnothing}$. Moreover, for every pair
$\{i,j\} \subset \{1,2,3,4\}$ distinct from $\{1,2\}$, we have that
$m^{H_1}_{ij} \ge m^{H_0}_{ij}$. Therefore, $m^{H_7}_{\varnothing} \ge
2\, m^{H_0}_{\varnothing}$. As a consequence, $G$ has at least $2
\cdot 2^{(n-14)/18+2/3} \ge 2^{n/18+2/3} $ perfect matchings, which
concludes the proof of Lemma~\ref{lm-twisted-num}.
\end{proof}

\begin{lemma}
\label{lm-twisted-num-bip}
Let $G$ be a bipartite twisted net with $n$ vertices. Then $G$ has a
pair of corners in each color class, say $u_1,u_2$ and $v_1,v_2$, and
the graphs $G \setminus \{u_1,u_2,v_1,v_2\}$ and $G \setminus
\{u_i,v_j\}$ have a perfect matching for any $i,j\in \{1,
2\}$. Moreover, for some $i,j\in \{1, 2\}$, the graph $G \setminus
\{u_i,v_j\}$ has at least $2^{n/18-2/9}$ perfect matchings.
\end{lemma}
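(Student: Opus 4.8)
The statement has three assertions about a bipartite twisted net $G$ on $n$ vertices: (a) the four corners split as two in each colour class; (b) $G\setminus\{u_1,u_2,v_1,v_2\}$ and each $G\setminus\{u_i,v_j\}$ have a perfect matching; (c) for some choice of $i,j$ the graph $G\setminus\{u_i,v_j\}$ has at least $2^{n/18-2/9}$ perfect matchings. The plan is to prove all three by induction on $n$ along the recursive definition of a twisted net, so the base case is the $4$-cycle (where every $G\setminus\{u_i,v_j\}$ is a single edge, one perfect matching, and $2^{4/18-2/9}=2^0=1$), and the two inductive operations are \emph{incrementation} (adding a path of length three between two corners) and \emph{multiplication} (joining two twisted nets $H_1$, $H_2$ by two edges between corners).

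\textbf{Parity bookkeeping for (a) and (b).} First I would verify (a): in the $4$-cycle the two colour classes each contain two (corner) vertices; an incrementation removes two corners $u,u'$ of $G$ (one in each class, since $u,u'$ are the ends of an edge added later, hence non-adjacent, but they were joined by the length-three path so opposite colours) and introduces the two internal path vertices as new corners, again one in each class; a multiplication keeps exactly the two surviving corners of each $H_i$, and a counting of colour classes across the two joining edges forces the split to stay balanced. The existence statements in (b) are then the usual Hall/parity argument: a bipartite graph minus two vertices of opposite colour has a perfect matching iff a Tutte–Hall condition holds, and since $G$ is cyclically $4$-edge-connected enough (it is the side $G[B]$ of a cyclic $4$-edge-cut in a cyclically $4$-edge-connected graph, with all corners of degree two) one checks that no obstructing set can arise; alternatively, track a fixed perfect matching through the recursive construction and note that deleting the appropriate corner pair always leaves the residual graph with an obvious near-perfect structure. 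The cleanest route is to carry the existence of these matchings as part of the induction hypothesis and check it is preserved by incrementation and multiplication, which is a short case analysis on which corners are reused.

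\textbf{The exponential count (c).} This is the main work. I would introduce, as in the proof of Lemma~\ref{lm-twisted-num}, the quantities $m^H_X$ = number of perfect matchings of $H\setminus\{v^j:j\in X\}$ for a twisted net $H$ with corners $v^1,\dots,v^4$, and set up the transition rules. For an incrementation adding the path $v^1\,w_1\,w_2\,v^2$ (new corners $w_1,w_2$), one gets relations of the form $m^{H'}_{\varnothing}=m^H_{\varnothing}+m^H_{12}$, $m^{H'}_{12}=m^H_{\varnothing}$, and crucially, for the ``one from each class'' deletions — which is what $G\setminus\{u_i,v_j\}$ is — relations like $m^{H'}_{\{w_1,v^3\}}=m^H_{\{v^1,v^3\}}+m^H_{\{v^2,v^3\}}$ or similar, so that the relevant $m$-value grows by a factor comparable to the growth of $m^H_\varnothing$. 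For a multiplication of $H_1$ ($n_1$ vertices) and $H_2$ ($n_2$ vertices), perfect matchings of $G\setminus\{u_i,v_j\}$ decompose over the two joining edges into a sum of products of $m$-values of $H_1$ and $H_2$, one factor of which is always the full $m_\varnothing$-count and the other a ``minus two corners'' count; invoking Lemma~\ref{lm-twisted-num} for the $m_\varnothing$ factor ($\ge 2^{n_i/18+2/3}$) and the induction hypothesis (c) for the other factor ($\ge 2^{n_{3-i}/18-2/9}$) yields $\ge 2^{n_1/18+n_2/18+2/3-2/9}\ge 2^{n/18-2/9}$ since $n\le n_1+n_2+\text{(bounded additive loss)}$ and the slack $2/3$ absorbs it. Finally, as in Lemma~\ref{lm-twisted-num}, a run of seven consecutive incrementations multiplies the count by $2$, covering the at most $12$ vertices those incrementations add; combined with the base case this closes the induction.

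\textbf{Main obstacle.} The delicate point is organizing the case analysis over \emph{which} corner of the newly built twisted net we keep as $u_i$ or $v_j$: incrementations and multiplications reshuffle the corner set, so I must show that among the (at most four) candidate deletions $G\setminus\{u_i,v_j\}$ there is always one whose $m$-value inherits the good lower bound, i.e., one that corresponds under the transition rules to a monotone-growing or product-split quantity rather than to one of the ``bad'' $m_X$'s that can stagnate. Getting the quantifier right — ``for some $i,j$'' rather than ``for all'' — and propagating exactly this existential through both operations is where the argument has to be set up carefully; the arithmetic with the exponents is routine once the right $m$-value is identified, because the additive constant $2/3$ in Lemma~\ref{lm-twisted-num} and the $-2/9$ here leave comfortable room for the constant-size vertex losses at each step.
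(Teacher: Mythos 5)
Your treatment of assertions (a) and (b)---the corners split two per colour class and the required near-perfect matchings exist---is in line with the paper, which proves exactly these by induction along the twisted-net recursion with a short case analysis for incrementation and for the two colour configurations arising in a multiplication. But for the quantitative claim (c) the paper does something entirely different and much shorter: it caps $G$ off by two new adjacent vertices $u,v$, joining $u$ to $v_1,v_2$ and $v$ to $u_1,u_2$, producing a cubic bridgeless \emph{bipartite} graph on $n+2$ vertices. Theorem~\ref{thm-bip} (Voorhoeve) then gives at least $(4/3)^{(n+2)/2}$ perfect matchings avoiding the edge $uv$; each of them deletes one of the four pairs $\{u_i,v_j\}$ from $G$, so some pair is used by at least one quarter of them, and elementary estimates (plus a direct check for $n\le 12$) turn $\tfrac14(4/3)^{(n+2)/2}$ into $2^{n/18-2/9}$. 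This sidesteps the transition-rule bookkeeping entirely.

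Your proposed inductive route for (c) has a concrete gap in the multiplication step. You assert that a perfect matching of $G\setminus\{u_i,v_j\}$ always contributes one factor equal to the full count $m_\varnothing$ on one side, but this fails whenever the two $G$-corners lying in $H_1$ belong to the same colour class (forcing the two $G$-corners of $H_2$ into the other class). In that configuration every admissible pair $\{u_i,v_j\}$ has one vertex in each $H_\ell$; by parity any perfect matching of $G\setminus\{u_i,v_j\}$ uses exactly one of the two gluing edges $a_1b_1,a_2b_2$, giving
$$m^G_{\{u_i,v_j\}} \;=\; m^{H_1}_{\{u_i,a_1\}}\,m^{H_2}_{\{v_j,b_1\}}\;+\;m^{H_1}_{\{u_i,a_2\}}\,m^{H_2}_{\{v_j,b_2\}},$$
in which no factor is an $m_\varnothing$. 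Moreover, the induction hypothesis only tells you that \emph{some} opposite-colour pair of $H_1$ and \emph{some} such pair of $H_2$ carry the bound $2^{n_\ell/18-2/9}$, and these two favoured pairs need not land in the same product on the right-hand side; in the worst case the bound degenerates to an additive $2^{n_1/18-2/9}+2^{n_2/18-2/9}$, far short of $2^{(n_1+n_2)/18-2/9}$. (The incrementation step does go through: deleting the two new interior vertices $w_1,w_2$ leaves exactly $H$, so $m^G_{\{w_1,w_2\}}=m^H_\varnothing\ge 2^{(n-2)/18+2/3}\ge 2^{n/18-2/9}$ by Lemma~\ref{lm-twisted-num}. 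Incidentally your stated transition $m^{H'}_{\{w_1,v^3\}}=m^H_{\{v^1,v^3\}}+m^H_{\{v^2,v^3\}}$ is also wrong---deleting $w_1$ forces $w_2$ onto $v^2$, so it is the single term $m^H_{\{v^2,v^3\}}$---but that inaccuracy is not where the real obstruction lies.) To push the induction through the bad multiplication case you would need a substantially stronger hypothesis, e.g. a product bound over all admissible pairs as in Lemma~\ref{lm-twisted-num-non-bip}; the paper instead dodges the issue with the Voorhoeve argument.
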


\begin{proof}
The fact that each color class contains two corners of $G$, as well as
the existence of perfect matchings of $G \setminus
\{u_1,u_2,v_1,v_2\}$, and $G \setminus \{u_i,v_j\}$ for any $i,j\in
\{1, 2\}$, easily follow by induction on $n$ (we consider that the
empty graph contains a perfect matching): if $G$ was obtained from a
twisted net $H$ by an incrementation, a matching avoiding all four
corners of $G$ is the same as a matching avoiding two corners of
different colors in $H$ (which is assumed to exist by the
induction). A matching avoiding two corners of different colors in $G$
is either a perfect matching of $H$ or a matching avoiding two corners
of different colors in $H$. So we can assume that $G$ was obtained
from $H_1$ and $H_2$ by a multiplication. In this case a matching
avoiding all four corners of $G$ can be obtained by combining
matchings avoiding all four corners in $H_1$ and $H_2$. Let $u_1,v_1$
be the corners of $G$ lying in $H_1$ and $u_2,v_2$ be the corners of
$G$ lying in $H_2$. First assume that $u_1,v_1$ are in one color class
of $G$, and $u_2,v_2$ are in the other one. In this case, matchings of
$G$ avoiding two corners of different colors are obtained by combining
matchings of $H_1$ and $H_2$ avoiding two corners of different
colors. Otherwise, since $G$ is bipartite, it means without loss of
generality that $u_1,u_2$ are in one color class, and $v_1,v_2$ are in
the other color class. A perfect matching of $G \setminus \{u_1,v_1\}$
is then obtained by combining a perfect matching of $H_1 \setminus
\{u_1,v_1\}$ and a perfect matching of $H_2$. Let $w_1$ be the corner
of $H_1$ of the same color as $v_1$, and let $w_2$ be the corner of
$H_2$ with the same color as $u_2$. A perfect matching of $G \setminus
\{u_1,v_2\}$ is obtained by combining a perfect matching of $H_1
\setminus \{v_1,w_1\}$ and a perfect matching of $H_2 \setminus
\{v_2,w_2\}$. All other matchings of $G$ avoiding two corners of
different colors are obtained in one of these two ways.

Consider now the graph $H$ obtained from $G$ by adding two adjacent
vertices $u,v$ and by joining $u$ to $v_1,v_2$ and $v$ to
$u_1,u_2$. This graph is cubic, bridgeless, and bipartite, so by
Theorem~\ref{thm-bip} it has at least $(4/3)^{(n+2)/2}$ perfect
matchings avoiding the edge $uv$. As a consequence, two corners of $G$
in different color classes, say $u_1,v_1$ are such that $G\setminus
\{u_1,v_1\}$ has at least $\frac14 (4/3)^{(n+2)/2} \ge 2^{n/6-5/3}$
perfect matchings (we use that $2^{1/3} \le 4/3$). If $n=4$, $G$ has
at least $1=2^{4/18-2/9}$ matching avoiding two corners. If $6 \le n
\le 12$, it can be checked that $G$ has at least $2 \ge 2^{n/18-2/9}$
matchings avoiding two corners. If $n \ge 14$, $2^{n/6-5/3} \ge
2^{n/18-1/9} \ge 2^{n/18-2/9}$, which concludes the proof.
\end{proof}

\begin{lemma}
\label{lm-twisted-num-non-bip}
Suppose $G$ is a non-bipartite twisted net with $n$ vertices and
corners $v_1,\ldots,v_4$. If for every $1 \le i < j \le 4$, we denote
by $m^G_{ij}$ the number of perfect matchings of $G \setminus
\{v_i,v_j\}$, then $\prod_{1 \le i < j \le 4} m^G_{ij} \ge
2^{n/18+4/9}$. In particular, all values $m^G_{ij}$ are at least one.
\end{lemma}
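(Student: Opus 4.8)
The plan is to argue by induction on $n$, following the recursion that defines twisted nets. Since $C_4$ is bipartite, a non-bipartite twisted net $G$ is obtained either from a strictly smaller twisted net $G_0$ by an incrementation, or from two strictly smaller twisted nets $H_1,H_2$ by a multiplication; in both cases the pieces have fewer than $n$ vertices, so the induction hypothesis applies to any of them that is non-bipartite, while Lemma~\ref{lm-twisted-num} (exponentially many perfect matchings) and Lemma~\ref{lm-twisted-num-bip} (the $2$--$2$ colour-class split of the four corners, the non-vanishing of the matching count after deleting any mixed corner pair or all four corners, and an exponential bound $2^{m/18-2/9}$ for some mixed corner pair) apply to any of them that is bipartite. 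Write $m^T_X$ for the number of perfect matchings of $T$ with the vertices of $X$ deleted. Note that the ``in particular'' clause is automatic: once the product bound is proved, each $m^G_{ij}$ is a positive integer.

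\textbf{Incrementation.} Write the added path as $axyb$ with $a,b$ corners of $G_0$; the corners of $G$ are then $x,y$ together with the two corners $c,d$ of $G_0$ distinct from $a,b$. Inspecting how a perfect matching meets this path gives $m^G_{\{x,y\}}=m^{G_0}_\varnothing$, the four equalities $m^G_{\{x,c\}}=m^{G_0}_{\{b,c\}}$, $m^G_{\{x,d\}}=m^{G_0}_{\{b,d\}}$, $m^G_{\{y,c\}}=m^{G_0}_{\{a,c\}}$, $m^G_{\{y,d\}}=m^{G_0}_{\{a,d\}}$, and $m^G_{\{c,d\}}=m^{G_0}_{\{c,d\}}+m^{G_0}_{\{a,b,c,d\}}$. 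Hence $\prod_{i<j}m^G_{ij}\ge m^{G_0}_\varnothing$ as soon as the other five quantities are positive. If $G_0$ is non-bipartite this is exactly the ``in particular'' part of the induction hypothesis; if $G_0$ is bipartite, then $G$ being non-bipartite forces $a$ and $b$ into the same colour class of $G_0$ (otherwise the path could be $2$-coloured consistently), so $c,d$ lie in the other class and $m^{G_0}_{\{a,c\}},m^{G_0}_{\{a,d\}},m^{G_0}_{\{b,c\}},m^{G_0}_{\{b,d\}},m^{G_0}_{\{a,b,c,d\}}\ge1$ by Lemma~\ref{lm-twisted-num-bip}. In either case $\prod_{i<j}m^G_{ij}\ge m^{G_0}_\varnothing\ge 2^{(n-2)/18+2/3}\ge 2^{n/18+4/9}$ by Lemma~\ref{lm-twisted-num}, since $\tfrac{n-2}{18}+\tfrac23=\tfrac n{18}+\tfrac59$.

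\textbf{Multiplication.} Let $u,u'$ and $v,v'$ be the corners of $H_1$ and $H_2$ used by the multiplication, $p_1,p_2$ and $q_1,q_2$ the surviving ones, so the corners of $G$ are $p_1,p_2,q_1,q_2$. The two new edges form a $2$-edge-cut between $V(H_1)$ and $V(H_2)$, so a parity count on how many of them a perfect matching uses gives $m^G_{\{p_1,p_2\}}=m^{H_1}_{\{p_1,p_2\}}m^{H_2}_\varnothing+m^{H_1}_{\{p_1,p_2,u,u'\}}m^{H_2}_{\{v,v'\}}$, the symmetric formula for $m^G_{\{q_1,q_2\}}$, and $m^G_{\{p_a,q_b\}}=m^{H_1}_{\{p_a,u\}}m^{H_2}_{\{q_b,v\}}+m^{H_1}_{\{p_a,u'\}}m^{H_2}_{\{q_b,v'\}}$ for $a,b\in\{1,2\}$. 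If $H_1,H_2$ are both non-bipartite, all their corner-pair counts are $\ge1$ by induction, so $m^G_{\{p_1,p_2\}}\ge m^{H_2}_\varnothing$, $m^G_{\{q_1,q_2\}}\ge m^{H_1}_\varnothing$, and with Lemma~\ref{lm-twisted-num} and positivity of the four mixed quantities $\prod_{i<j}m^G_{ij}\ge 2^{n_1/18+2/3}\,2^{n_2/18+2/3}\ge 2^{n/18+4/9}$. If (say) $H_2$ is bipartite, I use its colour classes. When $q_1,q_2$ lie in different classes the same double blow-up still gives $2^{n_1/18+2/3}\,2^{n_2/18+2/3}$. When $q_1,q_2$ lie in the same class (so $v,v'$ are in the other; and if $H_1$ is bipartite as well, then since for two bipartite factors $G$ is bipartite exactly when ``$u,u'$ monochromatic in $H_1$'' $\Leftrightarrow$ ``$v,v'$ monochromatic in $H_2$'', $G$ non-bipartite pins down $u,u'$, hence $p_1,p_2$, as monochromatic in $H_1$): the weight of $H_2$ is routed through the mixed corner pair of $H_2$ that Lemma~\ref{lm-twisted-num-bip} guarantees carries $\ge2^{n_2/18-2/9}$ near-perfect matchings — that count is a factor of one of the $m^G_{ij}$ (of $m^G_{\{q_1,q_2\}}$ if the pair is $\{q_1,q_2\}$, of $m^G_{\{p_1,p_2\}}$ via its second summand if the pair is $\{v,v'\}$, and of a suitable mixed $m^G_{\{p_a,q_b\}}$ otherwise), its $H_1$-companion being a mixed-corner or all-four-corner count of $H_1$ and hence $\ge1$ — and the weight of $H_1$ is recovered either as $m^{H_1}_\varnothing\ge 2^{n_1/18+2/3}$ from $m^G_{\{q_1,q_2\}}$ when $H_1$ is bipartite, or, when $H_1$ is non-bipartite, by harvesting the whole product $\prod_{i<j}m^{H_1}_{ij}\ge 2^{n_1/18+4/9}$: choose, for each of the six $m^G_{ij}$, a summand whose $H_1$-factor is a distinct one of the six corner-pairs of $H_1$, the accompanying $H_2$-factors being $m^{H_2}_\varnothing$ once and mixed-corner or all-four-corner counts of $H_2$ (hence $\ge1$) otherwise. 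Combining the two \emph{distinct} large factors with the remaining quantities, each of which one checks is $\ge1$, yields at least $2^{n_1/18+2/3}\,2^{n_2/18-2/9}=2^{n/18+4/9}$ — the equality-tight case.

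The main obstacle is exactly this routing: the bipartite pieces make every monochromatic corner deletion produce zero perfect matchings, so each factor's exponential weight must be pushed through a carefully chosen non-vanishing channel, and one must ensure that the channels carrying the weight of $H_1$ and of $H_2$ are \emph{different} $m^G_{ij}$'s, so their bounds multiply; one must also respect the razor-thin slack in the exponents ($5/9\ge4/9$ for an incrementation; $2/3-2/9=4/9$ for the critical bipartite multiplication). Verifying that $m^G_{ij}\ge1$ in every sub-case — via the $2$--$2$ colour split together with the fact that deleting all four corners of a bipartite twisted net leaves a perfectly matchable graph — is what simultaneously yields the ``in particular'' statement, which is what the covering-by-near-perfect-matchings lemma needs downstream.
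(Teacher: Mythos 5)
Your setup, the explicit transfer formulas for $m^G_{\{x,y\}}$, $m^G_{\{x,c\}}$, etc., the incrementation case, and the multiplication case with at most one bipartite factor all match the paper's argument and are correct. The gap is in the critical multiplication sub-case where \emph{both} $H_1,H_2$ are bipartite and the surviving corners $q_1,q_2$ of $H_2$ lie in the same colour class. There are two problems there, one cosmetic and one fatal.

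Cosmetic: your parenthetical concludes that $u,u'$ (hence $p_1,p_2$) are \emph{monochromatic} in $H_1$. That is backwards. With $\{v,v'\}$ monochromatic, the biconditional ``$G$ bipartite $\iff$ ($u,u'$ mono $\iff$ $v,v'$ mono)'' forces, for $G$ non-bipartite, that $u,u'$ lie in \emph{different} colour classes of $H_1$, and therefore so do $p_1,p_2$.

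Fatal: you try to recover the weight of $H_1$ as $m^{H_1}_\varnothing$ through $m^G_{\{q_1,q_2\}}$. But the transfer formula is
\[
m^G_{\{q_1,q_2\}} \;=\; m^{H_2}_{\{q_1,q_2\}}\,m^{H_1}_\varnothing \;+\; m^{H_2}_{\{q_1,q_2,v,v'\}}\,m^{H_1}_{\{u,u'\}},
\]
and since $q_1,q_2$ are in the same colour class of the bipartite graph $H_2$, $m^{H_2}_{\{q_1,q_2\}}=0$: the very first summand, the only one containing $m^{H_1}_\varnothing$, vanishes. This is precisely the ``monochromatic deletions give zero matchings'' obstruction you yourself warn about, and it kills the channel you chose. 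Your alternative for non-bipartite $H_1$ (harvesting $\prod m^{H_1}_{ij}$) is fine, but it does not cover bipartite $H_1$; the harvest fails there because half the pairs $\{p_a,u_b\}$ are monochromatic in $H_1$ and so give zero $H_1$-factors.

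The fix is to swap the roles: apply Lemma~\ref{lm-twisted-num-bip} to the bipartite piece whose \emph{surviving} corners lie in different classes, i.e.\ $H_1$ (so some mixed pair among $\{p_1,p_2\}$, $\{u,u'\}$, $\{p_a,u_b\}$ carries $\ge 2^{n_1/18-2/9}$ matchings of $H_1$), and push the full weight $m^{H_2}_\varnothing \ge 2^{n_2/18+2/3}$ through $m^G_{\{p_1,p_2\}} \ge m^{H_1}_{\{p_1,p_2\}}\,m^{H_2}_\varnothing$, which does not vanish because $p_1,p_2$ are mixed. If the big mixed pair of $H_1$ is $\{p_1,p_2\}$ itself, this single $m^G_{\{p_1,p_2\}}$ already gives $2^{n_1/18-2/9}\cdot 2^{n_2/18+2/3}$; if it is $\{u,u'\}$, route it through the second summand of $m^G_{\{q_1,q_2\}}$ with companion $m^{H_2}_{\{q_1,q_2,v,v'\}}\ge 1$; if it is $\{p_a,u_b\}$, route it through the appropriate summand of $m^G_{\{p_a,q_c\}}$ with a mixed $H_2$-companion $m^{H_2}_{\{q_c,v_b\}}\ge 1$. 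That is what the paper does, and it closes the gap.
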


\begin{proof}
We prove the statement by induction on $n$. There is only one non
bipartite twisted net of order at most six (it is the special graph of
Figure~\ref{fig:exept}). In this graph, one value $m^G_{ij}$ is two
and the others are one. Hence, the product of the $m^G_{ij}$ is at
least $2 \ge 2^{6/18+4/9}$.  Assume now that $G$ was obtained from $H$
by adding a path $v'_1v_2v_1v'_2$ between $v'_1$ and $v'_2$. By
Lemma~\ref{lm-twisted-num}, $G\setminus \{v_1,v_2\}$ has at least
$2^{(n-2)/18+2/3} \ge 2^{n/18+4/9}$ perfect matchings. So we only have to
make sure that all the other values $m^G_{ij}$ are at least one. If
the graph $H$ is not bipartite, then by the induction, for any pair
$\{ x,y \}$ of corners of $G$ distinct from $\{v_1,v_2 \}$, the graph
$G \setminus \{x,y\}$ also has a perfect matching. If $H$ is
bipartite, then $v'_1$ and $v'_2$ must lie in the same color class. By
Lemma~\ref{lm-twisted-num-bip}, $H$ has a matching covering all the
vertices except the four corners, and matchings covering all the
vertices except any two corners belonging to different color
classes. All these matchings extend to perfect matchings of $G
\setminus \{x,y\}$ for any pair of corners $\{ x,y \}$ distinct from
$\{v_1,v_2 \}$.

So we can assume that $G$ was obtained from two twisted nets $H_1$ and
$H_2$ of order $n_1,n_2$ by a multiplication. Let $v_1,v_3$ be the two
corners of $G$ lying in $H_1$, and let $v_2,v_4$ be the two corners of
$G$ lying in $H_2$. If none of $H_1,H_2$ is bipartite, then by
induction it is easy to check that $m^G_{ij}\ge 1$ for all $1 \le i <
j \le 4$. Moreover, since $H_1 \setminus \{v_1,v_3\}$ has a perfect
matching, $G\setminus \{v_1,v_3\}$ has at least $2^{n_2/18+2/3}$
perfect matchings by Lemma~\ref{lm-twisted-num}. Similarly,
$G\setminus \{v_2,v_4\}$ has at least $2^{n_1/18+2/3}$ perfect
matchings. As a consequence, $$\prod_{1 \le i < j \le 4} m^G_{ij} \ge
2^{n_2/18+2/3} \cdot 2^{n_1/18+2/3} \ge 2^{n/18+4/3} \ge
2^{n/18+4/9}.$$

Assume now that one of $H_1,H_2$, say $H_1$, is bipartite, while the
other is not bipartite. Denote by $u_1,u_3$ the corners of $H_1$
distinct from $v_1,v_3$, in such way that the graphs $H_1 \setminus
\{u_1,v_1\}$ and $H_1 \setminus \{u_3,v_3\}$ both have a perfect
matching (this is possible by Lemma~\ref{lm-twisted-num-bip}). Also
denote by $u_2$ and $u_4$ the corners of $H_2$ adjacent to $u_1$ and
$u_3$ in $G$, respectively. Observe that the perfect matchings of $H_2
\setminus \{v_2,v_4\}$ combine with perfect matchings of $H_1$ to give
perfect matchings of $G \setminus \{v_2,v_4\}$, and that perfect
matchings of $H_2 \setminus \{u_2,u_4\}$ combine with perfect
matchings of $H_1\setminus \{u_1,v_1,u_3,v_3\}$ (their existence is
guaranteed by Lemma~\ref{lm-twisted-num-bip}) to give perfect
matchings of $G \setminus \{v_1,v_3\}$. Also observe that for any $i
\in \{1,3\}$ and $j \in \{2,4\}$, a perfect matching of $G \setminus
\{v_i,v_j\}$ can be obtained by combining perfect matchings of $H_1
\setminus \{v_i,u_i\}$ and $H_2 \setminus \{u_{i+1},v_j\}$. As a
consequence, $$\prod_{1
\le i < j \le 4} m^G_{ij} \ge 2^{n_1/18+2/3} \cdot \prod_{1
\le i < j \le 4} m^{H_2}_{ij} \ge 2^{n_1/18+2/3} \cdot 2^{n_2/18+4/9}\ge
2^{n/18+4/9}.$$

Assume now that $H_1,H_2$ are both bipartite. Since $G$ is not
bipartite, without loss of generality it means that $v_1,v_3$ have
different colors in $H_1$ whereas $v_2,v_4$ have the same color in
$H_2$. Using that $H_2$ has a perfect matching and a matching covering
all the vertices except the four corners, and that both $H_1$ and
$H_2$ have matchings covering all the vertices except any two corners
in different color classes gives that for any pair $\{u,v\} \subset
\{v_1,v_2,v_3,v_4\}$, $G\setminus \{u,v\}$ has a perfect
matching. Hence, all values $m^G_{ij}$ are at least one. Again, we
denote by $u_1,u_3$ the corners of $H_1$ distinct from $v_1,v_3$, and
by $u_2$ and $u_4$ the corners of $H_2$ adjacent to $u_1$ and $u_3$ in
$G$, respectively. By Lemma~\ref{lm-twisted-num-bip}, without loss of
generality one of $H_1 \setminus \{v_1,v_3\}$, $H_1 \setminus
\{u_1,u_3\}$, and $H_1 \setminus \{v_1,u_1\}$ has at least
$2^{n_1/18-2/9}$ perfect matchings. If $H_1 \setminus \{v_1,v_3\}$ has
at least $2^{n_1/18-2/9}$ perfect matchings, then by combining them
with perfect matchings of $H_2$ we obtain at least $2^{n_1/18-2/9}
\cdot 2^{n_2/18+2/3} \ge 2^{n/18+4/9}$ perfect matchings of
$G\setminus \{v_1,v_3\}$. Assume that this is not the case, then we
still obtain at least $2^{n_2/18+2/3}$ such perfect matchings since
$v_1,v_3$ have different colors in $H_1$. If $H_1 \setminus
\{u_1,u_3\}$ has at least $2^{n_1/18-2/9}$ perfect matchings, they
combine with perfect matchings of $H_2 \setminus \{u_2,v_2,u_4,v_4\}$
to give at least $2^{n_1/18-2/9}$ perfect matchings of $G\setminus
\{v_2,v_4\}$. If $H_1 \setminus \{v_1,u_1\}$ has at least
$2^{n_1/18-2/9}$ perfect matchings, they combine with perfect
matchings of $H_2 \setminus \{u_2,v_2\}$ to give at least
$2^{n_1/18-2/9}$ perfect matchings of $G\setminus \{v_1,v_2\}$. In any
case, $$\prod_{1 \le i < j \le 4} m^G_{ij} \ge 2^{n_1/18-2/9} \cdot
2^{n_2/18+2/3} \ge 2^{n/18+4/9}.$$
\end{proof}

Lemmas~\ref{lm-twisted-num-bip} and~\ref{lm-twisted-num-non-bip} have
the following immediate consequence:

\begin{lemma}
\label{lm-twisted-num-bis}
If $G$ is a twisted net with $n$ vertices, then there exist two
corners $u,v$ of $G$ such that $G\setminus \{u,v\}$ has at least
$2^{n/108-1/27}$ perfect matchings.
\end{lemma}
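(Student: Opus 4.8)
The plan is to derive Lemma~\ref{lm-twisted-num-bis} directly from the two preceding lemmas by a short case analysis on whether the twisted net $G$ is bipartite. The two cases are genuinely different: in the bipartite case Lemma~\ref{lm-twisted-num-bip} already hands us, for some pair of corners $u,v$ lying in different color classes, a graph $G\setminus\{u,v\}$ with at least $2^{n/18-2/9}$ perfect matchings, and since $n/18-2/9 \ge n/108-1/27$ for all $n\ge 4$ (in fact the exponent is much larger), this case is immediate. In the non-bipartite case we invoke Lemma~\ref{lm-twisted-num-non-bip}: the product $\prod_{1\le i<j\le 4} m^G_{ij}$ over the six pairs of corners is at least $2^{n/18+4/9}$, so by averaging (pigeonhole on a product of six factors) at least one factor $m^G_{ij}$ is at least $\left(2^{n/18+4/9}\right)^{1/6} = 2^{n/108+2/27} \ge 2^{n/108-1/27}$. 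Taking $u=v_i$ and $v=v_j$ for that pair finishes the case.

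Concretely, I would first note that a twisted net is either bipartite or not, and in both cases has exactly four corners (this is stated in the definition of twisted net and re-used in Lemmas~\ref{lm-twisted-num-bip} and~\ref{lm-twisted-num-non-bip}). If $G$ is bipartite, apply Lemma~\ref{lm-twisted-num-bip} to obtain corners $u,v$ with $G\setminus\{u,v\}$ having at least $2^{n/18-2/9}$ perfect matchings, and observe $2^{n/18-2/9}\ge 2^{n/108-1/27}$ since $n/18-2/9 - (n/108-1/27) = 5n/108 - 5/27 \ge 0$ for $n\ge 4$. If $G$ is not bipartite, apply Lemma~\ref{lm-twisted-num-non-bip} to get $\prod_{1\le i<j\le 4} m^G_{ij}\ge 2^{n/18+4/9}$; since this is a product of six nonnegative integers, at least one of them, say $m^G_{ij}$, satisfies $m^G_{ij}\ge 2^{(n/18+4/9)/6} = 2^{n/108+2/27}$, and $n/108+2/27 \ge n/108-1/27$ trivially. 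Set $\{u,v\}=\{v_i,v_j\}$.

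The only mild subtlety — and it is already resolved by the way the earlier lemmas are stated — is that we need the selected pair of corners to actually be a genuine pair of distinct vertices of $G$ and that "$G\setminus\{u,v\}$ has a perfect matching" is the same notion of deleting two corners used throughout; both Lemma~\ref{lm-twisted-num-bip} and Lemma~\ref{lm-twisted-num-non-bip} phrase their conclusions exactly in these terms (the non-bipartite lemma even explicitly remarks that all $m^G_{ij}$ are at least one), so there is nothing to reconcile. I expect no real obstacle here: this lemma is purely a bookkeeping corollary, and the authors themselves flag it as an "immediate consequence." The only thing to be careful about is checking the small constant inequality $2^{n/18-2/9}\ge 2^{n/108-1/27}$ and the sixth-root bound, both of which hold with room to spare for every $n\ge 4$.

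\begin{proof}
A twisted net $G$ has exactly four corners; let them be $v_1,v_2,v_3,v_4$. We distinguish two cases according to whether $G$ is bipartite.

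If $G$ is bipartite, Lemma~\ref{lm-twisted-num-bip} provides two corners $u$ and $v$ in different color classes such that $G\setminus\{u,v\}$ has at least $2^{n/18-2/9}$ perfect matchings. Since $n/18-2/9-(n/108-1/27)=5n/108-5/27\ge 0$ for every $n\ge 4$, we conclude that $G\setminus\{u,v\}$ has at least $2^{n/108-1/27}$ perfect matchings.

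If $G$ is not bipartite, Lemma~\ref{lm-twisted-num-non-bip} gives
$$\prod_{1\le i<j\le 4} m^G_{ij}\ \ge\ 2^{n/18+4/9},$$
where $m^G_{ij}$ denotes the number of perfect matchings of $G\setminus\{v_i,v_j\}$. As this is a product of six nonnegative integers, at least one factor $m^G_{ij}$ satisfies
$$m^G_{ij}\ \ge\ \left(2^{n/18+4/9}\right)^{1/6}\ =\ 2^{n/108+2/27}\ \ge\ 2^{n/108-1/27}.$$
Taking $u=v_i$ and $v=v_j$ for such a pair yields the desired bound.
\end{proof}
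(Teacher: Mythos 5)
Your proof is correct and is exactly the derivation the authors intend when they call the lemma an ``immediate consequence'' of Lemmas~\ref{lm-twisted-num-bip} and~\ref{lm-twisted-num-non-bip}: the bipartite case follows directly from Lemma~\ref{lm-twisted-num-bip} together with the elementary inequality $n/18-2/9\ge n/108-1/27$ for $n\ge 4$, and the non-bipartite case follows by taking a sixth root (pigeonhole on a product of six nonnegative integers) of the product bound in Lemma~\ref{lm-twisted-num-non-bip}. Both numerical checks are right.
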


We now use these results to prove the D series of the lemmas.

\begin{proof}[Proof of Lemma D.$a.b$]
  Let $G$ be a cyclically 4-edge-connected graph, $e$ an edge of $G$
  not contained in a cyclic 4-edge-cut, and $H$ a $b$-expansion of $G$
  with $n$ vertices. Our aim is to prove that for some $\beta$
  depending only on $a$ and $b$, $H$ has at least $(a+3)n/24-\beta$
  perfect matchings avoiding $e$. If $a=0$, then the lemma follows
  from Lemma~\ref{lm-3conn} with $\beta=(b-1)/4$ (see the proof of the
  C series).  Assume now that $a\ge 1$. Let $\beta_B$ be the constant
  from Lemma B.$a$, $\beta_C$ the constant from Lemma C.$a.b$ and
  $\beta_E$ the constant from Lemma E.$(a-1).b$, and set $\beta$ to be
  the maximum of the numbers $2\beta_B+22$, $(a+3)b/6+\beta_B$,
  $\beta_C$, $(a+3)b/2+3\beta_E+30$, $21(a+3)b\cdot\ln 42
  (a+3)^2b+2+\beta_E$, and $\tfrac{a+3}{24}\,\kappa(a,b)$ (with
  $\kappa(a,b)$ depending only on $a$ and $b$, to be defined later in
  the proof). The proof proceeds by induction on the number of
  vertices of $G$.

If $G$ is cyclically $5$-edge-connected, the claim follows from Lemma
C.$a.b$. Assume that $G$ has a cyclic $4$-edge-cut $E(A,B)$ such that
$e$ is contained in $G[A]$ and at least one of the following holds:
\begin{enumerate}
\item
[(1)]
$G[A]$ is a cycle of length four,
\item
[(2)]
$G[A]$ is the six-vertex exceptional graph of Figure~\ref{fig:exept}, 
\item
[(3)] 
 $B$ is a twisted net of size at least $k$, where $k$ is the
  smallest integer such that $2^{k/108-1/27}\ge (a+3)n/24$, or
\item
[(4)]
$B$ is solid.
\end{enumerate}
Let $E(A^*,B^*)$ be the edge-cut of $H$ so that $H[A^*]$ and $H[B^*]$
are the expansions of $G[A]$ and $G[B]$; let $n_A$ and $n_B$ be the
numbers of vertices of $H[A^*]$ and $H[B^*]$.  Let $e_1$, $e_2$, $e_3$
and $e_4$ be the edges of $E(A,B)$, let $v_1$, $v_2$, $v_3$ and $v_4$
be their end-vertices in $A$, and let $v^H_1$, $v^H_2$, $v^H_3$ and
$v^H_4$ be their endvertices in $H[A^*]$.  For
$X\subseteq\{1,2,3,4\}$, let $g^A_X$ ($h^A_X$) denote the number of
matchings of $G[A]$ ($H[A^*]$) avoiding $e$ and covering all the
vertices of $G[A]$ ($H[A^*]$) except $v_i$ ($v^H_i$), $i\in
X$. Similarly, $g^B_X$ ($h^B_X$) is used. For each of these types of
matchings in $H[A^*]$ and $H[B^*]$ fix two matchings to be canonical
(if they exist, if not fix at least one if possible) and for
$X=\varnothing$, fix three matchings to be canonical (if they exist,
if not, fix as many as possible). Let $H^A_{ij}$ and $H^A_{(ij)}$
($H^B_{ij}$ and $H^B_{(ij)}$) be the expansions of $G^A_{ij}$ and
$G^A_{(ij)}$ ($G^B_{ij}$ and $G^B_{(ij)}$), respectively, for
$\{i,j\}\subset\{1,2,3,4\}$.

First assume that $G[A]$ is a cycle of length four. Without loss of
generality, the edge $e$ joins the end-vertices of $v_1$ and
$v_4$. Let $H'$ be the graph obtained from $H$ by contracting the
expansions of the vertices of $A$ into 4 single vertices. This graph
has at least $n-4b$ vertices, and each perfect matching of $H^B_{12}$
can be combined with a matching of $G[A]$ avoiding $e$ to give a
perfect matching of $H'$ avoiding $e$. Hence, $H$ has at
least $$\tfrac{a+3}{24}\,n_B-\beta_B \ge \tfrac{a+3}{24}\,n-
\tfrac{a+3}{6}\,b-\beta_B$$ perfect matchings avoiding $e$.

The case that $G[A]$ is the six-vertex exceptional graph of
Figure~\ref{fig:exept} will be addressed later in the proof. 

Consider now the third case. If $g^A_\varnothing\ne 0$, then by Lemma
\ref{lm-twisted-num} the graph $G[B]$ has at least
$2^{n^G_B/18+2/3}\ge 2^{n^G_B/108-1/27} \ge (a+3)n/24$ perfect
matchings, where $n^G_B$ is the number of vertices of $G[B]$; all such
perfect matchings extend
to perfect matchings of $H$. 

Assume $g^A_\varnothing= 0$. By Lemma \ref{lm-twisted-num-bis},
$g^B_{ij} \ge 2^{n^G_B/108-1/27} \ge (a+3)n/24$ for some
$\{i,j\}\subset\{1,2,3,4\}$.  By Lemma~\ref{lm-split-4A}, we may
assume that the graphs $G^A_{(12)}$ and $G^A_{(13)}$ are cyclically
$4$-edge-connected. Since there are no perfect matchings in
$G^A_{(12)}$ containing the edge $e^A_{12}$ and avoiding $e$, by Lemma
\ref{lm-special} the graph $G[A]\setminus e$ is bipartite and $e$
joins two vertices of the same color class. Then by Lemma
\ref{lm-special}, $G^A_{(13)}$ has a perfect matching containing
$e^A_1$ and avoiding $e^A_4$. Such perfect matchings must contain
$e^A_2$ and avoid $e$, thus $g^A_{12}\ne 0$.  Similarly, we obtain
that all the quantities $g^A_X$ with $|X|=2$ are non-zero. Therefore,
we can extend the matchings of $G[B]$ avoiding the vertices $v^B_i$
and $v^B_j$ to perfect matchings
of $H$.\\

We now analyse case (4). Assuming that $A$ contains at least 6
vertices and $B$ is solid, we will estimate the numbers of perfect
matchings of $H$ canonical in one part and non-canonical in the other.
We start with matchings canonical in $H[A^*]$ and non-canonical in
$H[B^*]$ and show that there are at least $(a+3)n_B/24-\beta/2$ such
perfect matchings in $H$.

We first assume that $G[A]\setminus e$ is not a bipartite graph such
that $e$ joins two vertices of the same color. By
Lemma~\ref{lm-split-4B}, we can assume that one of the following two
cases apply: all the graphs $G^A_{(12)}$, $G^A_{(13)}$ and
$G^A_{(14)}$ are cyclically $4$-edge-connected, or all the graphs
$G^A_{(12)}$, $G^A_{(13)}$ and $G^A_{12}$ are cyclically
$4$-edge-connected.

Let us first deal with the case that the graphs $G^A_{(1i)}$ with
$i=2,3,4$ are cyclically $4$-edge-connected.  Since neither of these
graphs can be of the form described in Lemma~\ref{lm-special}, there
exists a perfect matching of $G^A_{(1i)}$ containing $e_{(1i)}$ and
avoiding $e$ and so $g^A_{\varnothing}\ge 1$. In addition, for any
distinct $i,j,k \in \{1,2,3,4\}$, $g^A_{ij}+g^A_{ik}\ge 1$, since
there exists a perfect matching of $G^A_{(jk)}$ containing $e^A_i$ and
avoiding $e$. Hence, by symmetry, we can assume that all the
quantities $g^A_{13}$, $g^A_{14}$, $g^A_{23}$ and $g^A_{24}$ are
non-zero.  Now, since $H^B_{(12)}$ is a cubic bridgeless graph, by
Lemma B.$a$ it has at least $(a+3)n_B/24-\beta_B$ perfect matchings,
which all extend to $H[A^*]$.  At most $11$ of these matchings are
canonical in $H[B^*]$ and thus the number of perfect matchings avoiding $e$
canonical in $H[A^*]$ and non-canonical in $H[B^*]$ is at least
$(a+3)n_B/24-\beta_B-11$.

We now consider the case when the graphs $G^A_{(12)}$, $G^A_{(13)}$
and $G^A_{12}$ are cyclically $4$-edge-connected. As in the previous
case, $g^A_{\varnothing}$ is non-zero. If $g^A_{14}$ or $g^A_{23}$ is
zero, then we conclude that all the quantities $g^A_{12}$, $g^A_{13}$,
$g^A_{24}$ and $g^A_{34}$ are non-zero and proceed as in the previous
case. Hence, we can assume that both $g^A_{14}$ and $g^A_{23}$ are
non-zero. If $g^A_{1234}$ is also non-zero, we consider the graph
$H^B_{14}$ and argue that each of its perfect matchings can be
extended to $H[A^*]$ and obtain the bound. Finally, if $g^A_{1234}$ is
zero, then by considering matchings in $G^A_{12}$ containing $e_{12}$
and matchings containing $e_{34}$ we obtain that both $g^A_{12}$ and
$g^A_{34}$ are non-zero. In this case, all the perfect matchings of
the graph $H^B_{(13)}$ extend to $H[A^*]$ and the result follows.\\

We can now assume that the graph $G[A]\setminus e$ is bipartite (with
color classes $U,V$) and $e$ joins two vertices in the same color
class, say $U$. By degree counting argument, we obtain that it can be
assumed without loss of generality that $v_1 \in U$ and $v_2,v_3,v_4
\in V$, or $v_1,v_2,v_3,v_4 \in U$.

In the first case, we can assume by Lemma~\ref{lm-split-4A} that the
graphs $G^A_{(12)}$ and $G^A_{(13)}$ are cyclically
$4$-edge-connected. By Lemma~\ref{lm-double}, $G^A_{(12)}$ is double
covered, so it has two perfect matchings containing the edge
$e_{(12)}$. Since these two perfect matchings avoid the edge $e^A_2$,
they also avoid $e$ by Lemma~\ref{lm-special} and so
$g^A_{\varnothing}\ge 2$. By Lemma~\ref{lm-special}, $G^A_{(12)}$ has
a perfect matching containing $e^A_1$ and avoiding $e^A_i$ for
$i=3,4$. Since such perfect matchings avoid $e^A_2$, they also avoid
$e$. Hence, we obtain that $g^A_{13}$ and $g^A_{14}$ are non-zero. A
similar argument for the graph $G^A_{(13)}$ yields that also
$g^A_{12}$ is non-zero. Consider now perfect matchings avoiding the
edge $e^B_i$ in $H^B_{(1i)}$, for $i=2,3,4$. By
Lemma~\ref{lm-split-4A}, two of the graphs $G^B_{(1i)}$ are cyclically
4-edge-connected; by Lemma~E.$(a-1).b$ there are at least
$(a+2)n_B/24-\beta_E$ perfect matchings avoiding $e^B_i$ in
$H^B_{(1i)}$. The third graph $G^B_{(1i)}$ is cyclically
3-edge-connected and $e^B_i$ is not contained in a cyclic
3-edge-cut. Its expansion $H^B_{(1i)}$ is cyclically 3-edge-connected,
too, and the only cyclic 3-edge-cut containing $e^B_i$ is the cut
separating the expansion of $v^B_i$ from the rest of the graph. Let
$H'$ be the graph obtained from $H^B_{(1i)}$ by contraction of the
Klee-graph corresponding to $v^B_i$ in $H^B_{(1i)}$ to a single
vertex. The graph $H'$ has at least $n_B-b$ vertices; it is cyclically
3-edge-connected and $e^B_i$ is not contained in a cyclic
3-edge-cut. Hence, by Lemma~\ref{lm-3conn}, the number of perfect
matchings of $H^B_{(1i)}$ avoiding $e^B_i$ is at least
$(n_B-b)/8$. Altogether, we get
$$2\,h^B_{12}+2\,h^B_{13}+2\,h^B_{14}+3\,h^B_{\varnothing} \ge 2\cdot
\tfrac{a+2}{24}\,n_B + \tfrac1{8}\,n_B -\tfrac18\,b-2\beta_E.$$ As a consequence,
non-canonical matchings of $H[B^*]$ can be combined with canonical
matchings of $H[A^*]$ avoiding $e$ to give at least $$ h^B_{12}+
h^B_{13}+ h^B_{14}+2\,h^B_{\varnothing} -12\ge \tfrac{a+3}{24}\,n_B
-\tfrac1{16}\,b-\beta_E -12$$ perfect matchings of $H$ avoiding $e$.

We now assume that $v_1,v_2,v_3,v_4 \in U$. Again, it can be
assumed that the graphs $G^A_{(12)}$ and $G^A_{(13)}$ are cyclically
$4$-edge-connected. An application of Lemma~\ref{lm-special} similar
to the one in the previous paragraph yields that all the quantities
$g^A_X$ with $|X|=2$ are non-zero. Since $B$ is solid, all the graphs
$G^B_{(ij)}$ with $\{i,j\}\subseteq\{1,2,3,4\}$ are cyclically
$4$-edge-connected.  Hence, each $H^B_{(ij)}$ contains at least
$(a+2)n_B/24-\beta_E$ perfect matchings avoiding the edge
$e^B_{(ij)}$. As a consequence,
$$2\,h^B_{12}+2\,h^B_{13}+2\,h^B_{14}+ 2\,h^B_{23}+2\,
h^B_{24}+2\,h^B_{34}\ge 3 \cdot \tfrac{a+2}{24}\,n_B-3\beta_E.$$
Subtracting 12 matchings canonical in $H[B^*]$, we obtain that the number
of perfect matchings avoiding $e$ that are canonical in $H[A^*]$ and
non-canonical in $H[B^*]$ is at least
$$ \tfrac32 \cdot \tfrac{a+2}{24}\,n_B-\tfrac32\, \beta_E-12 \ge
\tfrac{a+3}{24}\,n_B-\tfrac32 \beta_E-12 .$$ This
concludes the counting of perfect matchings of $H$ avoiding $e$ that
are canonical in $H[A^*]$ and non-canonical in $H[B^*]$.\\

Observe that the bound just above also holds if $G[A]$ is the
exceptional six-vertex graph of Figure~\ref{fig:exept}. The edge $e$
cannot be a part of the 4-cycle (otherwise the first case would
apply), nor be adjacent to it (otherwise $e$ is contained in a cyclic
4-edge-cut in $G$). Hence, $G[A]\setminus e$ is bipartite,
$v_1,v_2,v_3,v_4$ have the same color and in particular $e$ connects
two vertices of the same color. In this case, since $n\le n_B+6b$, $H$
has at least $$ \tfrac{a+3}{24}\,n-\tfrac{a+3}4\,b-\tfrac32\,
\beta_E-12$$ perfect matchings avoiding $e$. So from now on we can
assume that $G[A]$ is neither a 4-cycle nor the exceptional six-vertex
graph of
Figure~\ref{fig:exept}.\\

We will now count the perfect matchings of $H$ that are
non-canonical in $H[A^*]$ and canonical in $H[B^*]$. Our aim is to show
that there are at least $(a+3)n_A/24-\beta/2$ such matchings.  
%

Consider the graphs $G^A_{(12)}$, $G^A_{(13)}$ and $G^A_{(14)}$.  Two
of these graphs are cyclically $4$-edge-connected by
Lemma~\ref{lm-split-4A}; the remaining one is $3$-edge-connected. We
claim it has no cyclic $3$-edge-cut containing $e$. Assume
$G^A_{(12)}$ has a cyclic $3$-edge-cut $E(C,D)$ containing $e$. It is
clear that the new edge $e^A_{(12)}$ belongs to the cut; let $f$ be
the third edge of the cut. Then $\{e,f,e_1,e_2 \}$ and
$\{e,f,e_3,e_4\}$ are $4$-edge-cuts in $G$ containing $e$. Since $G$
has no cyclic $4$-edge-cuts containing $e$, both $C\cap A$ and $D\cap
A$ consist of a pair of adjacent vertices. Then $G[A]$ is a cycle of
length 4, which was excluded above.

Lemmas E.$(a-1).b$ and~\ref{lm-3conn} now imply that
$$2h^A_{12}+2h^A_{13}+2h^A_{14}+2h^A_{23}+2h^A_{24}+2h^A_{34}+3h^A_{\varnothing}
\ge 2\cdot\tfrac{a+2}{24}\,n_A-2\beta_E+\tfrac1{8}\,(n_A-2b).$$ By the
choice of $B$ as solid, all the graphs $G^B_{(12)}$,
$G^B_{(13)}$ and $G^B_{(14)}$ are cyclically
$4$-edge-connected. In particular, if none
of them is the exceptional graph described in Lemma~\ref{lm-special},
then all the quantities $g^B_X$ with $|X|=2$ are non-zero and
$g^B_{\varnothing}\ge 2$ (here we use that cyclically 4-edge-connected
graphs are double covered). The bound now follows by dividing the
previous inequality by two and subtracting the at most $18$ canonical
matchings.

Otherwise, exactly two of the three graphs are of the form described
in Lemma~\ref{lm-special}, and $G[B]$ is bipartite. By symmetry, we
can assume that $v_1$ and $v_2$ lie in one color class and $v_3$ and
$v_4$ in the other.  Considering the graphs $G^B_{(13)}$ and
$G^B_{(14)}$, we observe that each of the quantities $g^B_{13}$,
$g^B_{14}$, $g^B_{23}$ and $g^B_{24}$ is at least two as the graphs
$G^B_{(13)}$ and $G^B_{(14)}$ are double covered by
Lemma~\ref{lm-double}.  In addition, Lemma~\ref{lm-triple} applied to
the bipartite graph $G^B_{(12)}$ yields that $g^B_{\varnothing}$ is at
least three. Finally, observe that the graph $G^B_{12}$ satisfies the
conditions of Lemma~\ref{lm-special}. Hence, any perfect matching of
$G^B_{12}$ containing $e_{12}$ also contains $e_{34}$, which implies
that $g^B_{1234}$ is non-zero and the number of matchings
non-canonical in $H[A^*]$ and canonical in $H[B^*]$ is at least
$$2h^A_{13}+2h^A_{14}+2h^A_{23}+2h^A_{24}+3h^A_{\varnothing}+
h^A_{1234}-27.$$  Replace now $B$ with the cycle of length four
$v_1v_3v_2v_4$ and observe that the resulting graph is cyclically
$4$-edge-connected. By Lemma E.$(a-1).b$, its expansion has
$$h^A_{13}+h^A_{14}+h^A_{23}+h^A_{24}+2h^A_{\varnothing}+h^A_{1234} \ge
\tfrac{a+2}{24}\,(n_A+4)-\beta_E$$ perfect matchings avoiding
$e$. Observe also that the graph $G^A_{(12)}$ is 3-edge-connected and
no cyclic 3-edge-cut contains $e$. Its expansion (except for the end-vertices of $e$) has
at least $(n_A-2b)/8$ perfect matchings avoiding $e$, thus,
$$h^A_{13}+h^A_{14}+h^A_{23}+h^A_{24}+h^A_{\varnothing} \ge
\tfrac1{8}\,(n_A-2b).$$ Summing the two
previous inequalities, we obtain that the number of perfect matchings
avoiding $e$ that are non-canonical in $H[A^*]$ and canonical in $H[B^*]$
is at least $$\tfrac{a+2}{24}\,(n_A+4)+\tfrac1{8}\,n_A-\tfrac14\,b-\beta_E -27\ge
\tfrac{a+3}{24}\,n_A-\tfrac12 \beta.$$

The bound on the number of matchings now follows from the estimates
on the perfect matchings canonical in one of the graphs $H[A^*]$ and $H[B^*]$
and non-canonical in the other. This finishes the first part of the
proof of Lemma D.$a$.$b$.\\

Based on the analysis above, we may now assume that $|A|\ge 8$ and if
$E(A,B)$ is a cyclic $4$-edge-cut of $G$ and $e$ is contained in $A$,
then $G[B]$ is a twisted net of size less than $k$, where $k$ is the
smallest integer such that $2^{k/108-1/27}\ge (a+3)n/24$ (see
Lemma~\ref{lm-twisted-struc}).  In particular, consider such a cyclic
$4$-edge-cut $E(A,B)$ with $B$ inclusion-wise maximal.  Assume that
$G[B]$ is a non-bipartite twisted net. Then by Lemma
\ref{lm-twisted-num-non-bip} we have $g^B_X\ge 1$ for any
$X\subset\{1,2,3,4\}$ with $|X|=2$.  Moreover, by Lemma
\ref{lm-twisted-num}, $g^B_\varnothing \ge 2$. Then there are at least
$$
h^A_{12}+h^A_{13}+h^A_{14}+h^A_{23}+h^A_{24}+h^A_{34}+2h^A_\varnothing
$$
perfect matchings avoiding $e$ in $H$.  Consider the graphs
$G^A_{(12)}$, $G^A_{(13)}$ and $G^A_{(14)}$.  Two of these graphs are
cyclically $4$-edge-connected by Lemma~\ref{lm-split-4A}; the
remaining one is $3$-edge-connected and it has no cyclic $3$-edge-cut
containing $e$.  Since $|B|<k$, their expansions have at least $n-kb$
vertices.  Hence, Lemmas E.$(a-1).b$ and~\ref{lm-3conn} imply that
$$
\gathered
h^A_{12}+h^A_{13}+h^A_{14}+h^A_{23}+h^A_{24}+h^A_{34}+2h^A_\varnothing \ge \\
\ge \tfrac12\,\left(2\cdot\tfrac{a+2}{24}\,(n-kb)-2\beta_E+\tfrac1{8}\,(n-kb-2b)\right)\ge \\
\ge \tfrac{a+3}{24}\,n +\tfrac1{48}\,n- \tfrac{b}{8}\,(a+3)(k+1).
\endgathered
$$

Assume that $G[B]$ is a bipartite twisted net. Let $e_1,\ldots,e_4$ be
the edges of the cut ordered in such a way that matchings including
$e_i$ and $e_{i+1}$, $i=1,2,3,4$, indices modulo four, extend to
$G[B]$ by Lemma~\ref{lm-twisted-num-bip}. Moreover, $g^B_{1234}\ge 1$
and $g^B_\varnothing\ge 2$.  Then there are at least
$$
h^A_{12}+h^A_{14}+h^A_{23}+h^A_{34}+h^A_{1234}+2h^A_\varnothing 
$$
perfect matchings avoiding $e$ in $H$. Let $m_{12}$, $m_{14}$, and
$m_{(13)}$ be the number of perfect matchings avoiding $e$ in the
graphs $H^A_{12}$, $H^A_{14}$, and $H^A_{(13)}$, respectively. Then
$$
h^A_{12}+h^A_{14}+h^A_{23}+h^A_{34}+h^A_{1234}+2h^A_\varnothing \ge
\tfrac12\,(m_{12}+m_{14}+m_{(13)}).
$$

In the rest of this section, we show that we can assume that at least
one of the following two cases applies:
\begin{enumerate}
\item [(1)] $G^A_{(13)}$ and one of the graphs $G^A_{12}$ and
  $G^A_{14}$ (say $G^A_{12}$) are $(2k+3)$-almost cyclically
  $4$-edge-connected, and $G^A_{14}$ is 3-edge-connected with no
  cyclic $3$-edge-cut containing $e$, or
\item [(2)] $G^A_{(13)}$ and one of the graphs $G^A_{12}$ and
  $G^A_{14}$ (say $G^A_{12}$) are $(2k+3)$-almost cyclically
  $4$-edge-connected, and the vertex set of $G^A_{14}$ can be
  partitioned into three parts $X$, $Y$ and $Z$ such that $E(X,Y\cup
  Z)$ and $E(X\cup Y,Z)$ are cyclic $3$-edge-cuts containing $e$,
  $G^A_{14}[Y]$ is a twisted net with $|Y| <k$, and both the graphs
  $G^A_{14}/(X\cup Y)$ and $G^A_{14}/(Y\cup Z)$ are $3$-edge-connected
  with no cyclic $3$-edge-cut containing $e$.
\end{enumerate}

Observe that the expansions of $G^A_{12}$, $G^A_{14}$ and $G^A_{(13)}$
have at least $n-kb$ vertices. In the first case, we apply
Lemma~E.$(a-1).b$ to the first two graphs and Lemma~\ref{lm-3conn} to
the remaining one, obtaining that $\tfrac12\,(m_{12}+m_{14}+m_{(13)})$
is at least
$$
\gathered
\tfrac12 \cdot \left( 2 \cdot \tfrac{a+2}{24}\,(n-(2k+3)b-kb)-2\beta_E -\tfrac{n-kb-2b}8\right)\ge \\ \ge
\tfrac{a+3}{24}\,n +\tfrac1{48}\,n- \tfrac{b}{8}\,(a+3)(k+1)-\beta_E.\endgathered$$

In the second case, we again apply Lemma~E.$(a-1).b$ to the first two
graphs.  Let $e_1$ and $e_2$ ($e_3$ and $e_4$) be the edges joining
$Y$ to $X$ ($Z$, respectively) in the third graph, say $G^A_{14}$. Let
$v_1,v_2,v_3,v_4$ be end-vertices of $e_1,e_2,e_3,e_4$ in $Y$.
According to Lemmas~\ref{lm-twisted-num-bip} and
\ref{lm-twisted-num-non-bip}, without loss of generality we may assume
that $G^A_{14}[Y]\setminus \{v_1,v_3 \}$ and $G^A_{14}[Y]\setminus
\{v_2,v_4 \}$ both have perfect matchings. Let $h^X_i$ be the number
of perfect matchings containing $e_i$, $i=1,2$, in the graph obtained
by $G^A_{14}/(Y\cup Z)$ by expanding as in $H$ all the vertices except
for the end-vertex of $e$. Observe that such graph does not contain a
cyclic 3-edge-cut containing $e$. Let $h^Z_i$, $i=3,4$ be defined
analogously. Let $n_X$ and $n_Z$ be the numbers of vertices in the
(full) expansions of $G^A_{14}/(Y\cup Z)$ and $G^A_{14}/(X\cup
Y)$. Since $|Y|\le k$ and $|B|\le k$, the number of perfect matchings
of $G^A_{14}$ avoiding $e$ is at least
$$
\gathered
h^X_1\cdot h^Z_3 + h^X_2 \cdot h^Z_4 \ge 
h^X_1+h^X_2 + h^Z_3+h^Z_4-2 \ge \\
\ge \tfrac18\,(n_X-b) + \tfrac18\,(n_Z-b)\ge \tfrac18\,(n-2kb-2b)-2.
\endgathered
$$
In this case, 
$\tfrac12\,(m_{12}+m_{14}+m_{(13)})$ is at least 
$$\gathered\tfrac12 \cdot \left( 2 \cdot \tfrac{a+2}{24}\,(n-(2k+3)b-kb)-2\beta_E -\tfrac{n-2kb-2b}8-2\right)\ge\\\ge
\tfrac{a+3}{24}\,n +\tfrac1{48}\,n- \tfrac{b}{8}\,(a+3)(k+1)-2-\beta_E.\endgathered$$

Observe that $2^{(k-1)/108-1/27} < \tfrac{a+3}{24}\,n$. Then using
$2^{168}>e^{108}$ and the fact that $e^x\ge 1+x$ for all
$x\in\mathbb{R}$ we get
$$
\aligned
\tfrac{1}{48}\,n 
&=\tfrac{1}{2(a+3)}\cdot \tfrac{a+3}{24}\,n \ge \\
&\ge \tfrac{1}{2(a+3)}\cdot 2^{(k-1)/108-1/27} 
=\tfrac{1}{2(a+3)}\cdot 2^{(k-5)/108} > \\
&>\tfrac{1}{2(a+3)}\cdot e^{(k-5)/168} 
=21(a+3)b\cdot e^{(k-5)/168 - \ln 42 (a+3)^2b} \ge \\
&\ge 21(a+3)b\cdot\left(1+ \tfrac{k-5}{168} - \ln 42 (a+3)^2b\right) > \\
&>\tfrac{b}{8}\,(a+3)(k+1) - 21(a+3)b\cdot\ln 42 (a+3)^2b. 
\endaligned
$$
The claim follows by the choice of $\beta$.\\


We now prove that (1) or (2) holds. Assume that one of the graphs
$G^A_{(12)}$, $G^A_{(13)}$, and $G^A_{(14)}$ is not $4$-almost
cyclically $4$-edge-connected, or one of the graphs $G^A_{12}$,
$G^A_{13}$ and $G^A_{14}$ is not $3$-edge-connected. Then, without
loss of generality $G[A]$ contains a 2-edge-cut $E(C,D)$ so that $e$,
$v_1$, and $v_2$ are in $C$, and $v_3$ and $v_4$ are in $D$. By
maximality of $B$, the 4-edge-cut $E(C, D\cup B)$ of $G$ is not cyclic
and $C$ consists of a single edge $e=v_1v_2$. On the other hand $E(D,
C\cup B)$ is a cyclic 4-edge-cut of $G$ (since otherwise $A$ would be
a 4-cycle), so $G[D]$ is a twisted net of size less than $k$. As a
consequence $G$ has at most $2k+2 \le 216\,
\log_2(\tfrac{a+3}{24}\,n)+12$ vertices. Since it has at least $n/b$
vertices, we obtain that $n$ is upper-bounded by
a constant $\kappa(a,b)$ depending only on $a$ and $b$ (which we do not compute
here, since the computation is very similar to the previous
one). Taking $\beta$ to be at least $\tfrac{a+3}{24}\, \kappa(a,b)$
yields the desired bound on the number of perfect matchings of $H$
avoiding $e$.
Therefore, we can assume in the following that the
graphs $G^A_{(12)}$, $G^A_{(13)}$, and $G^A_{(14)}$ are $4$-almost
cyclically $4$-edge-connected, and the graphs $G^A_{12}$, $G^A_{13}$,
and $G^A_{14}$ are $3$-edge-connected.

We now show that at least one of the graphs $G_{12}$ and $G_{14}$ is
$(2k+3)$-almost cyclically $4$-edge-connected. Assume that this is not
the case. Since the cyclic 3-edge-cuts of $G_{1i}$ correspond to
cyclic 4-edge-cuts in $G_{(1i)}$ containing $e_{1i}$,
Lemma~\ref{lm-ordered} implies that they are linearly
ordered. Therefore, $G_{12}$ contains a cyclic 3-edge-cut $E(C,D)$ and
$G_{14}$ contains a cyclic 3-edge-cut $E(C',D')$ such that all the
sets $C,D,C',D'$ have size at least $k+2\ge 4$. Without loss of
generality, we can assume that $v_1 \in C\cap C'$, $v_2 \in C'\cap D$,
$v_3 \in D\cap D'$, and $v_4 \in C\cap D'$.
\begin{figure}[htbp]
\begin{center}
\includegraphics[scale=0.5]{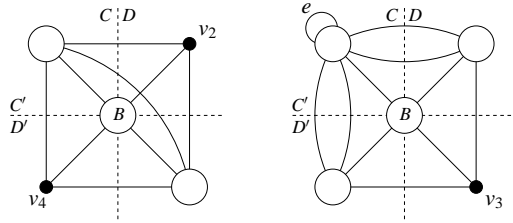}
\caption{In the case where none of $G_{12}$ and $G_{14}$ is
$(2k+3)$-almost cyclically $4$-edge-connected.\label{fig:dser2}}
\end{center}
\end{figure}

Assume there is an edge between $C\cap C'$ and $D\cap D'$. Beside this
edge, there are at most four more edges among the four sets $X\cap Y$,
$X\in\{C,C'\}$, $Y\in\{D,D'\}$. On the other hand, there are at least
two edges leaving $C\cap D'$ and at least two edges leaving $D\cap
C'$. Hence, there are precisely two edges leaving both $C\cap D'$ and
$D\cap C'$, $C\cap D'$ and $D\cap C'$ are $\{v_4\}$ and $\{v_2\}$
respectively, and $C\cap C'$ and $D\cap D'$ have size at least $k+1
\ge 3$ (see Figure~\ref{fig:dser2}, left). Hence, the edge-cuts
leaving $C\cap C'$ and $D\cap D'$ are cyclic 4-edge-cuts by
Observation~\ref{obs:cyc}. Since $e$ is not in a cyclic 4-edge-cut,
$e$ must lie in $C\cap C'$ or $D\cap D'$. In both cases, this
contradicts the maximality of $B$.

Consequently, we can assume without loss of generality that there are
no edges between $C\cap C'$ and $D\cap D'$, and between $C\cap D'$ and
$D\cap C'$.  Hence, all six edges of the cuts are within $C$, $C'$,
$D$, and $D'$. Without loss of generality, we may assume that there is
at most one $(C',D')$-edge in $D$ and at most one $(C,D)$-edge in
$D'$.  It means $D \cap D'$ contains a single vertex $\{v_3\}$, $C\cap
D'$ and $D\cap C'$ have size at least $k+1\ge 3$ (see
Figure~\ref{fig:dser2}, right). By maximality of $B$, $e$ is neither
in $C\cap D'$ nor in $D \cap C'$. The edges leaving $C \cap D'$ form a
cyclic 4-edge-cut, so by our assumption, $G[C \cap D']$ is a twisted
net of size at most $k$, a contradiction. This proves that one of
$G_{12}$ and $G_{14}$, say $G_{14}$, is $(2k+3)$-almost cyclically
$4$-edge-connected.

Assume now that $G_{12}$ has a cyclic $3$-edge-cut containing $e$.
Observe that cyclic $3$-edge-cuts of $G_{12}$ containing $e$
one-to-one correspond to such cyclic $4$-edge-cuts of $G_{(12)}$ and
apply Lemma~\ref{lm-ordered} to $G_{(12)}$. Set $X=A_1$, $Z=B_k$ and
$Y$ to be the remaining vertices. Clearly, $Y$ must be a twisted net
of size less than $k$. Observe that each of $G/(X\cup Y)$ and
$G/(Y\cup Z)$ is cyclically $3$-edge-connected. By minimality of $X$
and $Z$, $e$ is not contained in a cyclic 3-edge-cut in any of these
two graphs, as claimed.
\end{proof}

\section{Proof of E-series of lemmas}\label{section-E}

This section is mainly devoted to counting perfect matchings avoiding
an edge contained in a cyclic 4-edge-cut. A {\em ladder} of height $k$
is a $2 \times k$ grid. The two edges of a ladder having both
end-vertices of degree two are called the \emph{ends} of the ladder.

\begin{lemma}
\label{lm-ladder}
Let $G$ be a cyclically $4$-edge-connected graph and $E(A,B)$ a cyclic
$4$-edge-cut of $G$ containing the edges $e_1,\ldots,e_4$ having
end-vertices $v_1,\ldots,v_4$ in $A$. For $1 \le i \ne j \le 4$, let
$g^A_{ij}$ be the number of matchings of $G[A]$ covering all the
vertices of $A$ except for $v_i,v_j$. If one of the three numbers
$g^A_{23}$, $g^A_{24}$, and $g^A_{34}$ is zero, say $g^A_{ij}$, then
either the other two are at least two, or one of them is one, say
$g^A_{ik}$, and the subgraph $G[A]$ is a ladder with ends $v_1v_i$ and
$v_jv_k$.
\end{lemma}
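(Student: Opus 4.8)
The plan is to reduce by symmetry to the case where a specific one of the three quantities vanishes, to extract from that a rigid structural consequence — bipartiteness of $G[A]$ — and then to recognise the ladder from a uniqueness‑of‑matching hypothesis by induction on $|A|$.

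Since $v_1$ is distinguished while the hypothesis and the conclusion are symmetric in $v_2,v_3,v_4$, I may relabel so that the vanishing quantity is $g^A_{34}$; thus $\{i,j\}=\{3,4\}$ and $k=2$. First I would show that $g^A_{34}=0$ already forces $G[A]$ to be a connected, simple bipartite graph whose colour classes $X\ni v_3,v_4$ and $Y\ni v_1,v_2$ have equal size. Indeed $3|A|-4$ is even, so $|A|$ is even and $G[A]-v_3-v_4$ has even order; as it has no perfect matching, Tutte's theorem supplies $S\subseteq V(G[A])\setminus\{v_3,v_4\}$ for which $G[A]-v_3-v_4-S$ has at least $|S|+2$ odd components. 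Writing $S^\ast=S\cup\{v_3,v_4\}$ and arguing exactly as in the proof of Lemma~\ref{lm-special}: the number of edges of $G$ leaving $S^\ast$ is at most $3|S^\ast|$, and since the cut $E(A,B)$ already contributes the two edges $e_3,e_4$ incident with $S^\ast$, the total number of edges of $G$ leaving the components of $G[A]-S^\ast$ is at most $3|S^\ast|$ as well; on the other hand each such component $C$ loses at least three edges in $G$, and strictly more unless $C$ is a single vertex (otherwise $G$ would have a cyclic edge-cut of size at most three, using $|B|\ge 4$). Hence equality holds throughout: $G[A]-S^\ast$ consists of exactly $|S|+2$ singletons, $S^\ast$ is independent, and $v_1,v_2\notin S^\ast$. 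This makes $G[A]$ bipartite with classes $X=S^\ast$ and $Y=V(G[A])\setminus S^\ast$; $|X|=|Y|$ follows by counting edge-endpoints on each side, and connectedness and simplicity of $G[A]$ follow from the cyclic $4$-edge-connectivity of $G$ in the usual way.

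Running the same argument for $g^A_{23}$ shows that $g^A_{23}=0$ would place $v_2$ and $v_3$ in a common colour class, contradicting $v_2\in Y$, $v_3\in X$; so $g^A_{23}\ge 1$, and symmetrically $g^A_{24}\ge 1$. Therefore either both of $g^A_{23},g^A_{24}$ are at least two — the first alternative of the lemma — or one of them equals exactly one; after possibly exchanging $v_3$ and $v_4$ I may assume $g^A_{23}=1$, so that $g^A_{ik}=g^A_{23}$ with $i=3$, $j=4$, $k=2$, and what remains is to prove that $G[A]$ is the ladder with ends $v_1v_3$ and $v_2v_4$. For this I would look at $H=G[A]-v_2-v_3$, a connected balanced bipartite graph with a unique perfect matching $M$. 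By the classical staircase description of such graphs there are orderings $p_1,\dots,p_{m-1}$ of $X\setminus\{v_3\}$ and $q_1,\dots,q_{m-1}$ of $Y\setminus\{v_2\}$ with $p_\ell q_\ell\in M$ and $p_\ell q_r\notin E(H)$ whenever $r<\ell$; in particular $p_{m-1}$ and $q_1$ have degree one in $H$. Because the only vertices of $G[A]$ of degree other than three are $v_1,\dots,v_4$, and a degree-three vertex of $X\setminus\{v_3,v_4\}$ retains at least two neighbours in $Y\setminus\{v_2\}$ inside $H$, it follows that $p_{m-1}=v_4$ and, symmetrically, $q_1=v_1$; reading off their neighbourhoods gives $v_1v_3\in E(G[A])$ and $v_2v_4\in E(G[A])$, the prospective end-rungs. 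Inspecting $p_{m-2}$ and $q_2$, which exist and have degree three once $|A|\ge 6$, the staircase bound leaves them too few neighbours in $Q$, respectively in $P$, unless $p_{m-2}\sim v_2$ and $q_2\sim v_3$; together with the edges just found this determines the neighbourhoods of $v_1,\dots,v_4$ and identifies $\{v_2,v_4\}$ as a rung, whose deletion turns the four degree-two vertices into $v_1,v_3,p_{m-2},q_{m-1}$. I would then finish by induction on $|A|$ (base case $|A|=4$, where $G[A]=C_4$ and the claim is immediate): the graph $G[A]-v_2-v_4$ again satisfies all the hypotheses, the inherited uniqueness of $M$ furnishes the analogue of $g^A_{23}=1$, so by induction it is a ladder, and re-inserting the rung $\{v_2,v_4\}$ exhibits $G[A]$ as the ladder with ends $v_1v_3$ and $v_2v_4$.

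The extraction of bipartiteness is conceptually the crux, but it is only a mild variant of the counting already done for Lemma~\ref{lm-special}, so I do not expect trouble there. The part demanding real care is the recognition of the ladder: one has to verify that deleting the rung $\{v_2,v_4\}$ produces a graph of \emph{exactly} the same type — that no vertex among $v_1,\dots,v_4$ is adjacent to another of them in a way that would sever a rail (which the staircase analysis excludes once $|A|\ge 6$), that no parallel edge is created (forbidden because $G$, being cyclically $4$-edge-connected, has no small edge-cut), and that the correspondence of ``corners'' across the induction is tracked faithfully so that the ends emerge as $v_1v_3$ and $v_2v_4$ rather than as the crossed pair $v_1v_4$, $v_2v_3$. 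The low-height instances are dispatched by direct inspection.
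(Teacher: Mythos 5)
Your plan is sound and leads to a correct proof, but the ladder-recognition step is carried out by a genuinely different mechanism than the paper's. Both proofs begin by extracting the same structural core — that the vanishing of one $g^A_{ij}$ forces $G[A]$ to be bipartite with $v_1,\dots,v_4$ split two-and-two across the colour classes — but you obtain this by a Tutte-theorem count inside $G[A]$, whereas the paper invokes Lemma~\ref{lm-special} on one of the cyclically $4$-edge-connected graphs $G^A_{(1i)}$ supplied by Lemma~\ref{lm-split-4A}. For the uniqueness-of-matching step, the paper applies Kotzig's bridge lemma (Lemma~\ref{lm-bridge}) to the bipartite graph $H$ with unique perfect matching and then runs an odd-deficiency count on the two sides of the bridge, forcing one side to shrink to $v_1$ and exposing the end rung $v_1v_4$; you instead pass to the staircase (upper-triangular biadjacency) normal form of $H$, read off that the two degree-one vertices of $H$ must be the corners $v_4$ and $v_1$, and thereby expose the end rungs $v_1v_3$ and $v_2v_4$ directly. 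Both proofs then peel off one end rung and close by minimality/induction. Your route has the advantage of avoiding Lemmas~\ref{lm-split-4A}, \ref{lm-special}, \ref{lm-double} and~\ref{lm-bridge} entirely — the staircase is a very concrete handle — at the cost of having to re-derive the bipartiteness from scratch (the paper gets it for free from Lemma~\ref{lm-special}) and needing to be careful about which of $g^A_{23}$, $g^A_{24}$ can equal one: the paper's double-cover inequalities immediately force $g^A_{23}\ge 2$ in its normalization, so only one of the two remaining quantities can ever be $1$; in your normalization you only establish $g^A_{23},g^A_{24}\ge 1$, so you must additionally rule out the case that both equal $1$ with $|A|\ge 6$ (it is ruled out, since $v_1,\dots,v_4$ would then form a component of $G[A]$, but this needs to be said). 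Finally, the induction requires verifying that $E(A\setminus\{v_2,v_4\},\ B\cup\{v_2,v_4\})$ is again a \emph{cyclic} $4$-edge-cut — in particular that $G[A\setminus\{v_2,v_4\}]$ is connected (whence it has a cycle, since its minimum degree is two) — which your write-up flags but does not carry out; the paper asserts the analogous fact for its peeling of $\{v_1,v_4\}$ without comment either, so this is a matter of thoroughness rather than a gap.
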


\begin{proof}
Fix $G$ and choose an inclusion-wise minimal set $A$ in $G$ that does
not satisfy the statement of the lemma. By Lemma~\ref{lm-split-4A}, we can assume
that the graphs $G^A_{(12)}$ and $G^A_{(13)}$ are cyclically
$4$-edge-connected. By considering the matchings including the edges
$e_2$ and $e_3$ in these two graphs, we obtain that
$g^A_{23}+g^A_{24}$ and $g^A_{23}+g^A_{34}$ are at least two since
every cyclically $4$-edge-connected graph is double covered by
Lemma~\ref{lm-double}. Hence, if $g^A_{23}=0$ then
$g^A_{24}\ge 2$ and $g^A_{34}\ge 2$. By symmetry we can now assume that $g^A_{24}=0$
and so $g^A_{23} \ge 2$. In order to prove the lemma, we only need to
show that either $g^A_{34} \ge 2$, or $g^A_{34}=1$ and $G[A]$ is a
ladder with ends $v_1v_4$ and $v_2v_3$.

By Lemma~\ref{lm-special}, there exists a proper $2$-coloring of the
vertices of $G[A]$ such that $v_1$ and $v_3$ are in one color class, say $C_1$,
while $v_2$ and $v_4$ are in the other class, say $C_2$. Consequently, the graph
$G_{(13)}$ is bipartite.  By Lemma~\ref{lm-special}, $G_{(13)}$
contains a matching avoiding $e_1$ and containing $e_4$, i.e.,
$g^A_{34}\ge 1$.

Assume that $g^A_{34}=1$. By Lemma~\ref{lm-bridge}, the graph $H$
obtained from $G[A]$ by removing the vertices $v_3$ and $v_4$ has a
bridge contained in the unique perfect matching of $H$. Define the
\emph{deficiency} $d(H)$ of a subcubic graph $H$ to be the sum of the differences
between three and the degrees of the vertices. Since $G^A_{(12)}$ is cyclically 4-edge-connected, the vertices $v_3$ and $v_4$ are not adjacent in $G$, hence, $d(H)$ is six, three in each color class of $H$. Let $V$
and $W$ be such sets that the cut $E(V,W)$ is formed by the bridge $f$ of
$H$. Since the bridge $f$ is contained in the unique perfect matching of $H$, we can
assume that $|V\cap C_1|=|V\cap C_2|+1$ and thus $|W\cap C_1|=|W\cap
C_2|-1$. It means that the subgraphs $G[V]$ and $G[W]$ induced by $V$ and $W$ have odd numbers of vertices, hence, their deficiencies (including the end-vertices of the bridge $f$) are odd.
On the other hand, $d(G[V])$ and $d(G[W])$ cannot be equal to one, otherwise $f$ would be a bridge in $G$.
Since $d(G[V])+d(G[W])=8$, we can assume $d(G[V])=3$ and $d(G[W])=5$. But then the three edges leaving $V$ in $G$ form a cyclic 3-edge-cut, unless $G[V]$ is a single vertex $w$. Then $V\cap C_1=\{w\}$ and $V\cap C_2=\varnothing$; the degree of $w$ in $H$ is one. 

The vertex $w$ is thus either adjacent to $v_3$ or $v_4$, or it is one of the vertices $v_1$ and $v_2$.
Since $v_3$ and $v_4$ are in different color classes, $w$ is not adjacent to both of them. Since $w\in C_1$, $w=v_1$ 
and it is adjacent to $v_4$. 

Let
$A'=A\setminus\{v_1,v_4\}$ and $B'=B\cup\{v_1,v_4\}$. We denote by
$v_1'$ and $v_4'$ the neighbors of $v_1$ and $v_4$ in $A'$. If $G[A]$
is not a cycle of length four, then $E(A',B')$ is a cyclic
4-edge-cut. Observe that $g^A_{24}=0$ and $g^A_{34}=1$ implies
$g^{A'}_{12}=0$ and $g^{A'}_{13}=1$. So, by the minimality of $A$, the
subgraph $G[A']$ is a ladder with ends $v_1'v_4'$ and $v_2v_3$. Hence,
$G[A]$ is a ladder with ends $v_1v_4$ and $v_2v_3$.
\end{proof}

\begin{proof}[Proof of Lemma E.$a.b$]
{The proof proceeds by
induction on the number of vertices in $G$ (in addition, to the
general induction framework).} 

Let $G$ be a cyclically 4-edge-connected graph, $e$ an edge of $G$ and $H$ a $b$-expansion of $G$ with
$n$ vertices. Our aim is to prove that for some $\beta$ depending only
on $a$ and $b$, $H$ has at least $(a+3)n/24-\beta$ perfect matchings
avoiding $e$. If $e$ is not contained in a cyclic 4-edge-cut of $G$,
then this follows from Lemma D.$a.b$, so we can assume in the
remaining of the proof that $e$ is contained in a cyclic 4-edge-cut of
$G$.  

If $a=0$, then the lemma follows from Lemma~\ref{lm-3conn} with
$\beta=b/4$.  Assume that $a>0$ and let $\beta_E$ be the constant from
Lemma E.$(a-1).b$, $\beta_D$ the constant from Lemma D.$a.b$ and
$\beta_B$ the constant from Lemma B.$a$. Let $\gamma$ be the least
element of $\{n \in \mathbb{N} \, | \, n\ge 4\}$ satisfying
$$2^{\gamma/4-2}\ge \tfrac{a+3}{24}\,(\gamma b)+2$$
and $\beta$ be the maximum of the following numbers:
$4 \beta_E-24$, $(a+3)b/4$, $(a+3)\gamma b/12 + \beta_D$,
$(a+3)\gamma b/12+\beta_B$,
$(a+2)\gamma b/8+ 3\beta_E/2$,
$(a+2)(\gamma+1)b/6+2\beta_E$.\\

Let $A_1\subseteq A_2\cdots\subseteq A_k$ and $B_k\subseteq B_{k-1}
\cdots \subseteq B_1$ be as in the statement of
Lemma~\ref{lm-ordered}.  Assume first that there exists $i_0$ such
that neither $G[A_{i_0}]$ nor $G[B_{i_0}]$ is a ladder and they both
contain at least eight vertices each.
 To simplify the presentation, we
will write $A$ instead of $A_{i_0}$ and $B$ instead of $B_{i_0}$. Let
$e_2$, $e_3$, $e_4$, and $e=e_1$ be the edges of the edge-cut
$E(A,B)$. 
As previously, $h^A_X$ denotes the number of matchings of the
expansion $H[A]$ of $G[A]$ covering all the vertices except the
end-vertices of $e^A_i$, $i \in X$. The quantities $h^B_X$ are defined
accordingly for $G[B]$. Finally, let $E(A^*,B^*)$ be the edge-cut of
$H$ so that $H[A^*]$ and $H[B^*]$ are the expansions of $G[A]$ and
$G[B]$, and let $n_A$ and $n_B$ be the number of vertices of $H[A^*]$
and $H[B^*]$.

By Lemma~\ref{lm-split-4B}, without loss of generality at least one of
the following holds:
\begin{itemize}
\item {\it All the graphs $G^A_{(12)}$, $G^A_{(13)}$ and $G^A_{(14)}$ are
      cyclically $4$-edge-connected.}  By inspecting the types of
      perfect matchings avoiding $e=e_1$ in these graphs, we obtain
      that the three quantities $h^A_{23}+h^A_{24}+h^A_{\varnothing}$,
      $h^A_{23}+h^A_{34}+h^A_{\varnothing}$, and
      $h^A_{24}+h^A_{34}+h^A_{\varnothing}$ are at least
      $(a+2)(n_A+2)/24-\beta_E$ by Lemma E$.(a-1).b$.
      
\item {\it All the graphs $G^A_{(12)}$, $G^A_{(13)}$ and $G^A_{12}$
      are cyclically $4$-edge-connected.}  By inspecting the types of
      perfect matchings avoiding $e$ in these graphs, we obtain that
      two quantities $h^A_{23}+h^A_{24}+h^A_{\varnothing}$ and
      $h^A_{23}+h^A_{34}+h^A_{\varnothing}$ are at least
      $(a+2)(n_A+2)/24-\beta_E$, while
      $$h^A_{34}+h^A_{\varnothing} \ge \tfrac{a+2}{24}\,n_A-\beta_E.$$
\end{itemize}
In any case, all the quantities $h^A_{23}+h^A_{24}+h^A_{\varnothing}$,
      $h^A_{23}+h^A_{34}+h^A_{\varnothing}$, and
      $h^A_{24}+h^A_{34}+h^A_{\varnothing}$ are at least
      $(a+2)n_A/24-\beta_E$.

A symmetric argument now yields that all the quantities $h^B_{23}+
      h^B_{24}+h^B_{\varnothing}$,
      $h^B_{23}+h^B_{34}+h^B_{\varnothing}$, and
      $h^B_{24}+h^B_{34}+h^B_{\varnothing}$ are at least
      $(a+2)n_B/24-\beta_E$.

Choose one or two (two if possible) canonical matchings for each of
the four possible types avoiding $e$ ($23$, $24$, $34$, and
$\varnothing$). Since one of the graphs $G^A_{(ij)}$ is cyclically
4-edge-connected, it is double covered by Lemma~\ref{lm-double} and so
$h^A_\varnothing\ge 2$. Similarly, we have $h^B_\varnothing\ge 2$. If
all $h^A_{23}$, $h^A_{24}$ and $h^A_{34}$ are non-zero, then the
number of combinations of a canonical matching in $H[A^*]$ and a
non-canonical matching in $H[B^*]$ is at least
\begin{eqnarray*}
h^B_{23}+h^B_{24}+h^B_{34}+2h^B_{\varnothing}-10 & \ge &
h^B_{23}+h^B_{24}+h^B_{34}+\tfrac32\,h^B_{\varnothing}-10 \\ & \ge &
\tfrac32 \cdot \left(\tfrac{a+2}{24}\,n_B-\beta_E\right) -10 \\ & \ge &
\tfrac{a+3}{24}\,n_B-\tfrac32\,\beta_E -10.
\end{eqnarray*}
If one of the quantities is zero, say $h^A_{34}=0$, then $g^A_{34}=0$ and
Lemma~\ref{lm-ladder} yields $g^A_{23}$ and $g^A_{24}$ (as well as $h^A_{23}$ and $h^A_{24}$) are at least
two since $G[A]$ is not a ladder (recall that we assumed that for the 4-edge-cut
$E(A,B)$ containing $e$, neither $G[A]$ nor $G[B]$ is a ladder). Hence, the
number of combinations of a canonical matching in $H[A^*]$ and a
non-canonical matching in $H[B^*]$ is at least
\begin{eqnarray*}
2\,h^B_{23}+2\,h^B_{24}+2\,h^B_{\varnothing}-12 & \ge &
2\cdot (\tfrac{a+2}{24}\,n_B-\beta_E)-12\\ & \ge &
\tfrac{a+3}{24}\,n_B-2\beta_E-12.
\end{eqnarray*}
Similarly, we estimate combinations of non-canonical matchings in
$H[A^*]$ and canonical matchings of $H[B^*]$ to be at least
$(a+3)n_A/24-2\beta_E-12$. Hence, the expansion of $G$ has at least
$(a+3)n/24-4\beta_E-24$ perfect matchings avoiding $e$.\\

In the rest, we assume that whenever $G[A_i]$ and $G[B_i]$ have at least 8
vertices, at least one of them is a ladder. Assume there is at least
one cut such that both parts have at least 8 vertices.  It is clear that if $G[A_{i_0}]$ is a ladder, then for
all $i\le i_0$ $G[A_i]$ is a ladder too. Analogously, if $G[B_{j_0}]$
is a ladder, then for all $j \ge j_0$ $G[B_j]$ is a ladder too. Let
$i_0$ be the largest $i$ such that $G[A_i]$ is a ladder. Then if $i_o
< k$, $G[A_{i_0+1}]$ is not a ladder, and therefore, $G[B_{i_0+1}]$ is
either a ladder or a graph on at most 6 vertices.

Assume that $G[A_{i_0}]$ is a ladder with at least $\gamma$ vertices
(recall that $\gamma$ was defined as the least integer satisfying
$2^{\gamma/4-2} \ge (a+3)\gamma b/24+2$) and $B_{i_0}$ has at least
eight vertices. We again write $A$ and $B$ instead of $A_{i_0}$ and
$B_{i_0}$. It can be checked that $G[A]$ (as well as $H[A^*]$) has a
matching covering all the vertices except the end-vertices of $e_i$
and $e_j$ for two different pairs $i,j$ in $\{2,3,4\}$ with $i \ne j$,
say $2,3$ and $2,4$. Fix a single canonical matching of $H[A^*]$
avoiding each of these two pairs of vertices, and a single canonical
perfect matching of $H[A^*]$. Fix a single canonical perfect matching
of $H[B^*]$ (such a perfect matching exists since any of the graphs
$G^B_{(ij)}$ is bridgeless, and thus matching-covered). By
Lemma~\ref{lm-split-4B} and the observations in the previous cases,
one of the graphs $G^B_{(12)}$, $G^B_{13}$, or $G^B_{14}$ is
cyclically 4-edge-connected and all perfect matchings of its expansion
avoiding $e$ can be combined with a canonical matching of
$H[A^*]$. Hence, the number of combinations of a canonical matching in
$H[A^*]$ and a non-canonical matching in $H[B^*]$ is at least
$(a+3)n_B/24-\beta-1$ by the induction within this lemma (we
subtracted one to count the canonical matching).

Observe that there
are at least $2^{\lfloor n^G_A/4\rfloor}$ perfect matchings in $G[A]$
containing none of the edges of the cut, where $n^G_A$ is the number of
vertices of $A$ and these at least $$\tfrac{a+3}{24}\,n^G_A \,b+2\ge
\tfrac{a+3}{24}\,n_A+2$$ matchings (the bound follows from the choice
of $\gamma$) can be extended by the canonical matching of
$H[B^*]$. Subtracting one for a possible canonical matching among these,
we obtain that the number of combinations of a non-canonical matching
in $H[A^*]$ and a canonical matching in $H[B^*]$ is at least
$(a+3)n_A/24+1$, which together with the bound on the combinations of
canonical matchings in $H[A^*]$ and non-canonical matchings in $H[B^*]$
yields the desired bound.  

Observe that if $G[A]$ is a ladder with at
least $\gamma$ vertices and $G[B]$ has less than eight vertices, there
are at least $$\tfrac{a+3}{24}\,n_A+2 \ge \tfrac{a+3}{24}\,n -
\tfrac{a+3}{4}\,b + 2 $$ perfect matchings in $H$. This includes the
case when the whole graph is a ladder.\\

\begin{figure}[htbp]
\begin{center}
\includegraphics[scale=0.5]{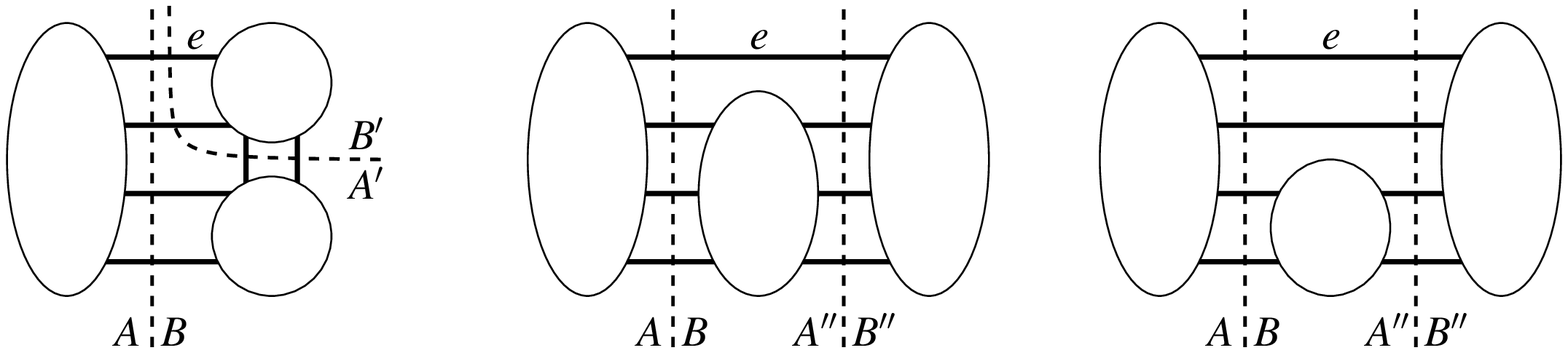}
\caption{Cyclic 4-edge-cuts containing $e$ if $i_0=k$. \label{fig:eser1}}
\end{center}
\end{figure}

For the rest of the proof, we can assume $G[A_{i_0}]$ is a ladder with less than $\gamma$
vertices.  If the number of vertices of $G$ is less than $3\gamma$,
then there is nothing to prove by the choice of $\beta$.

First, assume that $i_0=k$. We again write $A$ and $B$ instead of
$A_{i_0}$ and $B_{i_0}$. Let $E(A,B)=\{e=e_1,e_2,e_3,e_4\}$. Since
$G[A]$ is a ladder, we may assume there is a (canonical) matching of
$H[A^*]$ covering all vertices except the end-vertices of $v_i$ and
$v_j$, $\{i,j\}=\{2,3\}$ or $\{2,4\}$; and a (canonical) perfect
matching of $H[A^*]$.  Consider the graph $G^\prime=G^B_{(12)}$.  If
there is a cyclic 3-edge-cut $E(X,Y)$ in $G^\prime$, then the new edge
$e_{12}$ is in the cut. Assume that the end-vertex of $e$ in $B$ is in
$Y$. Then $E(A^\prime, B^\prime)$ with $A^\prime=A\cup X\setminus
\{v_{34}\}$, $B^\prime =Y\setminus \{v_{12}\}$ is a cyclic 4-edge-cut
in $G$ containing $e$ such that $A^\prime\supsetneq A$, a
contradiction (see Figure~\ref{fig:eser1}, left). Therefore,
$G^\prime$ is cyclically 4-edge-connected.

If $G'$ has a cyclic
4-edge-cut $E(X,Y)$ containing $e$, then the new edge $e_{12}$ is not
in the cut. Again, assume that the end-vertex of $e$ in $B$ is in
$Y$. Then $v_{12},v_{34}\in X$ and again
$E(A^{\prime\prime},B^{\prime\prime})$ with $A^{\prime\prime}=A\cup
X\setminus \{v_{12},v_{34}\}$, $B^{\prime\prime}=Y$ is a cyclic
4-edge-cut in $G$ such that $A^{\prime\prime}\supsetneq A$, a
contradiction (see Figure~\ref{fig:eser1}, center and
right). Therefore, there is no cyclic 4-edge-cut containing $e$ in
$G$.  Hence, by Lemma~D.$a.b$, the expansion of $G^B_{(12)}$ has at
least $$
\tfrac{a+3}{24}\,(n-\gamma b)-\beta_D=\tfrac{a+3}{24}\,n-\tfrac{a+3}{24}\,\gamma b-\beta_D
$$
perfect matchings avoiding $e$.  As each of these matchings can be
extended by a canonical matching of $H[A^*]$ to a perfect matching of
$H$,
the claim now follows by the choice of $\beta$.\\

Next, assume that $i_0<k$. Then $G[A_{i_0+1}]$ is not a ladder, thus
$G[B_{i_0+1}]$ has less than 8 vertices or it is a ladder with less
than $\gamma$ vertices. Let $A=A_{i_0}$, $B=B_{i_0+1}$,
$C=V(G)\setminus(A\cup B)$.  We use the following arguments also in
the case when for all $i$ either $G[A_i]$ or $G[B_i]$ has less than 8
vertices.

The number of edges betwen $A$ and $B$ is one or two: the edge
$e$ is contained in both $(A\cup C,B)$ and $(A,B\cup C)$ and thus it
must be joining a vertex of $A$ and a vertex of $B$. On the other
hand, if they were three or more edges between $A$ and $B$, then there
would be at most two edges between $A\cup B$ and $C$ which is
impossible since $G$ is cyclically $4$-edge-connected.

Assume now that there are exactly two edges between between $A$ and
$B$, and let $e_2$ be the edge distinct from $e$. Let $e_3$ and $e_4$
be the edges between $A$ and $C$ and $e_5$ and $e_6$ the edges between
$B$ and $C$ (see Figure~\ref{fig:eser2}, left). Since $G[A]$ and $G[B]$
are ladders or have at most 6 vertices, it is easily seen that they
both have at least two perfect matchings. We now distinguish three
cases (we omit symmetric cases) based on the number
$m^A_{34}$ (and $m^B_{56}$) of matchings in $G[A]$ ($G[B]$) covering
all the vertices but the end-vertices of $e_3$ and $e_4$ ($e_5$ and
$e_6$, respectively):
\begin{itemize}
\item {Let $m^A_{34}\ge 1$ and $m^B_{56}\ge 1$.}
      Remove all the vertices of $A\cup B$ and identify the edges $e_3$ and $e_4$
      to a single edge and the edges $e_5$ and $e_6$ to a single edge.
      Observe that the resulting graph is bridgeless and thus its expansion
      contains at least
      $$\tfrac{a+3}{24}\,(n-2\gamma b)-\beta_B=\tfrac{a+3}{24}\,n-\tfrac{a+3}{12}\,\gamma b-\beta_B$$
      perfect matchings by Lemma B.$a$. Each of these matchings can be extended
      to a perfect matching of $H$ avoiding $e$ and the bound follows.
\item {Let $m^A_{34}=0$ and $m^B_{56}=0$.} Observe that $G[A\cup B]$ contains a matching
      avoiding $e$ and covering all the vertices except the
      end-vertices of $e_3$ or $e_4$ (the edge can be prescribed) and
      $e_5$ or $e_6$ (again, the edge can be prescribed). To see this,
      observe that in $G^A_{(13)}$, there exists a perfect matching containing
      $e_3^A$. Since $m_{34}=0$, this matching also contains
      $e_2^A$. Similarly, considering perfect matchings of $G^A_{(14)}$ containing $e_4^A$ we get that $G[A]$ has a matching covering all the
      vertices except the end-vertices of $e_2$ and $e_4$; and the same
      holds for $G[B]$. The combination of these four matchings yields
      the desired result.
      
Remove now all the vertices of $A\cup B$, identify the end-vertices of
$e_3$ and $e_4$ and the end-vertices of $e_5$ and $e_6$ and add an
edge between the two new vertices.  Observe that the resulting graph
is bridgeless and thus its expansion contains at least
$$\tfrac{a+3}{24}\,(n-2\gamma
b)-\beta_B=\tfrac{a+3}{24}\,n-\tfrac{a+3}{12}\,\gamma b-\beta_B$$
perfect matchings by Lemma B.$a$. Each of these matchings can be
extended to a perfect matching of $H$ avoiding $e$ and the bound
follows.
\item {Let $m^A_{34}\ge 1$ and $m^B_{56}=0$.} 
Recall that each of $G[A]$ and $G[B]$ is a
      ladder or has at most 6 vertices. Hence, each of them is either
      the exceptional graph of Figure~\ref{fig:exept} or
      bipartite. Hence, $h^A_\varnothing\ge 2$ and $h^B_\varnothing\ge 2$ and therefore there are at least four perfect matchings of $G[A\cup B]$ avoiding $e$. 
      
      Observe that in the exceptional graph, all the values
      $m_{ij}$ are at least one, so $G[B]$ is necessarily bipartite.
Two of the four corners (vertices of degree two) are white, and
      two are black. Moreover, there is a matching covering all the
      vertices except any pair of corners of distinct colors, and
      there are no matchings covering all the vertices except a pair
      of corners of the same color. Since $m^B_{56}=0$, the
      end-vertices of $e_5$ and $e_6$ have the same color. Hence,
      there exist a matching covering all the vertices of $G[B]$
      except $e_2$ and $e_5$ (resp. $e_2$ and $e_6$). Consider perfect matchings of $G^A_{(12)}$ containing $e^A_2$. By symmetry, we may assume there is a matching of $G[A]$ covering all its vertices except the end-vertices of $e_2$ and $e_3$. 
      
      Altogether, these matchings can be combined to matchings of $G[A\cup B]$ avoiding $e$ covering all its vertices except:
      
      \begin{itemize} 
      \item {\it the end-vertices of $e_3$ and $e_5$}, and 
      \item {\it the end-vertices of $e_3$ and $e_6$}, and
      \item {\it the end-vertices of $e_3$ and $e_4$:}
      such a matching is obtained by combining a perfect matching of
      $G[B]$ and a matching of $G[A]$ covering all the vertices except
      the end-vertices of $e_3$ and $e_4$ (which exists since
      $m^A_{34}\ge 1$.) 
      \end{itemize}

\begin{figure}[htbp]
\begin{center}
\includegraphics[scale=0.5]{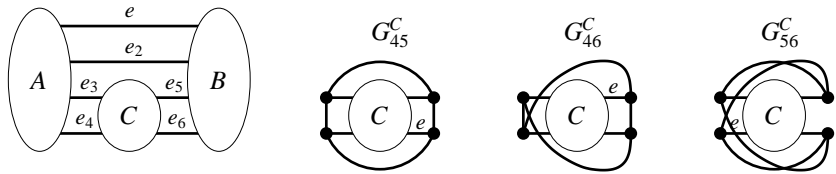}
\caption{When there are two edges between $A$ and $B$. \label{fig:eser2}}
\end{center}
\end{figure}

      Consider now the graphs
      $G^C_{ij}$, $\{i,j\}\subseteq\{4,5,6\}$ obtained from $G$ by
      removing all the vertices of $A\cup B$, introducing a new cycle
      of length four and making its vertices incident with the edges
      $e_i,e_3,e_j$ and the remaining edge which will play the role of
      $e$ (in this order). These three graphs are depicted in
      Figure~\ref{fig:eser2}, right. Applying Lemma E.$(a-1).b$ to the three
      graphs $G^C_{ij}$, we obtain the following
      inequalities: \begin{eqnarray*}
      h^C_{34}+h^C_{35}+2h^C_{\varnothing} & \ge &
      \tfrac{a+2}{24}\,(n-2\gamma b)-\beta_E \\
      h^C_{34}+h^C_{36}+2h^C_{\varnothing} & \ge &
      \tfrac{a+2}{24}\,(n-2\gamma b)-\beta_E \\
      h^C_{35}+h^C_{36}+2h^C_{\varnothing} & \ge &
      \tfrac{a+2}{24}\,(n-2\gamma b)-\beta_E \end{eqnarray*} where
      $h^C_X$ is the number of matchings of the expansion of $G[C]$ covering all its
      vertices except the end-vertices of the edges with indices from
      $X$. Observe that perfect matchings of
      $G^C_{ij}$ avoiding $e$ can be extended to perfect matchings of
      $H$ (avoiding the original $e$); those avoiding all the four edges incident with the cycle in at
      least four different ways. Finally, we obtain the following estimate on the number of
      perfect matchings of $H$ avoiding $e$: \begin{eqnarray*}
      h^C_{34}+h^C_{35}+h^C_{36}+4h^C_{\varnothing} & \ge &
      \tfrac32\cdot\left[\tfrac{a+2}{24}\,n - \tfrac{a+2}{12}\,\gamma
      b-\beta_E\right] \\ & \ge &
      \tfrac{a+3}{24}\,n-\tfrac{a+2}8\,\gamma
      b-\tfrac32\,\beta_E\mbox{.}  \end{eqnarray*}
\end{itemize}

It remains to consider the case that the edge $e$ is the only edge
between $A$ and $B$.  Let $e_2$, $e_3$ and $e_4$ be the three edges
between $A$ and $C$, and $e^\prime_2$, $e^\prime_3$ and $e^\prime_4$
the three edges between $B$ and $C$ (see Figure~\ref{fig:eser3},
left).  Recall that each of $G[A]$ and $G[B]$ is a ladder or has at
most six vertices. By symmetry, we can assume that, in addition to a
perfect matching, $G[A]$ contains a matching covering all its vertices
except the end-vertices of $e_2$ and one of the edges $e_3$ and $e_4$
(both choices possible). Symmetrically, for $G[B]$.  Remove now all
the vertices of $A\cup B$, identify the end-vertices of $e_3$ and
$e_4$ and join the new vertex to the end-vertex of
$e_2$. Symmetrically, for $e^\prime_2$, $e^\prime_3$ and $e^\prime_4$.
Finally, let $e$ be the edge joining the only two vertices of degree
two (see Figure~\ref{fig:eser3}, center). It can be verified that the
resulting graph $G'$ is cyclically $4$-edge-connected and $e$ is not in
any cyclic $4$-edge-cut of it unless $e$ is contained in a triangle in
$G'$.  Hence, unless $e$ is contained in a triangle in $G'$, by Lemma
D.$a.b$ the expansion of $G'$ has at least
$$\tfrac{a+3}{24}\,(n-2\gamma
b)-\beta_D=\tfrac{a+3}{24}\,n-\tfrac{a+3}{12}\,\gamma b-\beta_D$$
perfect matchings avoiding $e$ which all extend to the expansion of
$G$.

\begin{figure}[htbp]
\begin{center}
\includegraphics[scale=0.5]{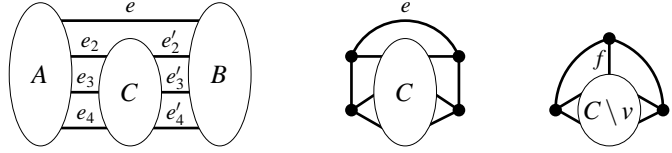}
\caption{When there is only one edge between $A$ and $B$. \label{fig:eser3}}
\end{center}
\end{figure}

Assume now that $e$ is contained in a triangle. In other words, the
edges $e_2$ and $e^\prime_2$ have a common vertex, say $v$, in $G$ and
let $f$ be the third edge incident with $v$.  Observe that $G'$ is
$2$-almost cyclically $4$-edge-connected (its only cyclic $3$-edge-cut
is the triangle containing $e$). Reduce the triangle (see
Figure~\ref{fig:eser3}, right) and apply Lemma E.$(a-1).b$.  Observe
that each matching of the expansion of the reduced graph avoiding $f$
can be extended in at least two different ways to a perfect matching
of $H$ avoiding $e$ (for any such matching, either none of the edges
of $E(A,B\cup C)$ is included and we use $h^A_\varnothing\ge 2$, or
none of the edges of $E(A\cup C,B)$ is included and we use
$h^B_\varnothing \ge 2$). Hence, the number of perfect matchings of
$H$ avoiding $e$ is at least
$$2\cdot\tfrac{a+2}{24}\,(n-2\gamma b-2b)-2\beta_E\ge
\tfrac{a+3}{24}\,n-\tfrac{a+2}{6}\,(\gamma+1)b-2\beta_E.$$
\end{proof}

This finishes the proof of the E-series of the lemmas and also
concludes the proof of Theorem~\ref{th:main}, which is readily seen to
be a direct consequence of the B-series. Note that from the E-series
we obtain the following result:

\begin{theorem}\label{th:c4c}
  For any $\alpha >0$ there exists a constant $\beta>0$ such that
  every $n$-vertex cyclically 4-edge-connected cubic graph has at
  least $\alpha n-\beta$ perfect matchings avoiding any given edge.
\end{theorem}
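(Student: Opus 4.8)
The plan is to obtain Theorem~\ref{th:c4c} as an immediate specialization of the E-series of lemmas to the case $b=1$. Recall that, by the convention fixed in Section~\ref{sec:notation}, a $1$-expansion of a graph is the graph itself; hence Lemma~E.$a.1$ states exactly that there is a constant $\beta_a\ge 0$ such that every $n$-vertex cyclically $4$-edge-connected cubic graph has at least $(a+3)n/24-\beta_a$ perfect matchings avoiding any prescribed edge. It is essential here to invoke the E-series and not the D-series, because Lemma~D.$a.b$ only controls edges lying in no cyclic $4$-edge-cut, whereas Theorem~\ref{th:c4c} must handle \emph{every} edge; the E-series was built precisely to remove that restriction, reducing to the D-series when the distinguished edge $e$ avoids all cyclic $4$-edge-cuts and otherwise cutting along the linearly ordered family of cyclic $4$-edge-cuts through $e$ furnished by Lemma~\ref{lm-ordered}.

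Given $\alpha>0$, I would then simply choose a positive integer $a$ with $a+3\ge 24\alpha$ and set $\beta:=\beta_a$, the constant supplied by Lemma~E.$a.1$; this $\beta_a$ is finite for each fixed $a$, being produced by the finite induction on $a$ that threads through the A-, B-, C-, D-, and E-series (each step adding only a bounded amount to the additive constant). For any $n$-vertex cyclically $4$-edge-connected cubic graph $G$ and any edge $e$ of $G$, Lemma~E.$a.1$ then yields at least $(a+3)n/24-\beta \ge \alpha n-\beta$ perfect matchings of $G$ avoiding $e$, which is the claimed bound. There is essentially no obstacle here: all the substance has already been expended in establishing the A--E series, and the present theorem is just the $b=1$ instance of the E-series with the linear coefficient $(a+3)/24$ pushed above $\alpha$ by taking $a$ large. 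The only point worth double-checking is the harmless fact that a $1$-expansion is the original graph, so that Lemma~E.$a.1$ applies to $G$ itself rather than to some proper expansion of it.
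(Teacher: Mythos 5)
Your argument is correct and is essentially the paper's own: the theorem is stated in the paper as a direct consequence of the E-series, and specializing Lemma~E.$a.1$ (where a $1$-expansion is the graph itself) with $a$ chosen so that $(a+3)/24\ge\alpha$ gives exactly the claimed bound. The proposal adds the helpful explicit note that the E-series rather than the D-series must be invoked since the edge may lie in a cyclic $4$-edge-cut, but this is the same route the paper takes.
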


This does not hold for 3-edge-connected graphs: there exists an
infinite family of 3-edge-connected cubic graphs containing an edge avoided
by only two perfect matchings. However, recall that by Lemma~\ref{lm-3conn},
any 3-edge-connected cubic graph has a linear number of perfect
matchings avoiding any edge not contained in a cyclic 3-edge-cut.\\

Despite all our efforts, we were not able to replace the bound in
Theorem~\ref{th:main} by an explicit superlinear bound. We offer 1\,kg
of chocolate bars \emph{Studentsk\'a pe{\v c}et'} for the first
explicit bound derived from our proof. To get a superpolynomial or
even an exponential bound, one would probably like to insert
Lemma~\ref{lm-3conn} in the induction argument; we believe that the
linear bound in Lemma~\ref{lm-3conn} can be replaced by a bound
exponential in $n$.

\end{document}